\apptocmd{\sloppy}{\hbadness 10000\relax}{}{}
\apptocmd{\sloppy}{\vbadness 10000\relax}{}{}
\numberwithin{equation}{section}
\theoremstyle{plain}
\newtheorem{thm}{Theorem}[section]
\newtheorem{prop}[thm]{Proposition}
\newtheorem{cor}[thm]{Corollary}
\newtheorem{lem}[thm]{Lemma}
\theoremstyle{definition}
\newtheorem{rem}[thm]{Remark}
\newtheorem{ex}[thm]{Example}
\def\R{\mathbb{R}}
\def\Z{\mathbb{Z}}
\def\N{\mathbb{N}}
\def\g{\gamma}
\def\d{\delta}
\def\G{\Gamma}
\def\a{\alpha}
\newcommand{\diam}{\mathop\mathrm{diam}\nolimits}
\newcommand{\dist}{\mathop\mathrm{dist}\nolimits}
\newcommand{\Hold}{\mathop\mathrm{H\ddot{o}ld}\nolimits}
\newcommand{\Lip}{\mathop\mathrm{Lip}\nolimits}
\newcommand{\sdim}{\operatorname{s-dim}}
\newcommand{\card}{\operatorname{card}}
\newcommand{\val}{\operatorname{val}}
\newcommand{\Haus}{\mathcal{H}}
\newcommand{\interior}{\mathop\mathrm{int}\nolimits}
\begin{document}

\title[H\"older parameterization of IFS]{H\"older parameterization of iterated function systems and a self-affine phenomenon}

\author{Matthew Badger \and Vyron Vellis}
\thanks{M.~Badger was partially supported by NSF DMS grant 1650546. V.~Vellis was partially supported by NSF DMS grants 1800731 and 1952510.}
\date{November 1, 2020}
\subjclass[2010]{Primary 28A80; Secondary 26A16, 28A75, 53A04}
\keywords{H\"older curves, parameterization, iterated function systems, self-affine sets}

\address{Department of Mathematics\\ University of Connecticut\\ Storrs, CT 06269-1009}
\email{matthew.badger@uconn.edu}
\address{Department of Mathematics\\ The University of Tennessee\\ Knoxville, TN 37966}
\email{vvellis@utk.edu}

\begin{abstract} We investigate the H\"older geometry of curves generated by iterated function systems (IFS) in a complete metric space. A theorem of Hata from 1985 asserts that every connected attractor of an IFS is locally connected and path-connected. We give a quantitative strengthening of Hata's theorem. First we prove that every connected attractor of an IFS is $(1/s)$-H\"older path-connected, where $s$ is the similarity dimension of the IFS. Then we show that every connected attractor of an IFS is parameterized by a $(1/\alpha)$-H\"older curve for all $\alpha>s$. At the endpoint, $\alpha=s$, a theorem of Remes from 1998 already established that connected self-similar sets in Euclidean space that satisfy the open set condition are parameterized by $(1/s)$-H\"older curves. In a secondary result, we show how to promote Remes' theorem to self-similar sets in complete metric spaces, but in this setting require the attractor to have positive $s$-dimensional Hausdorff measure in lieu of the open set condition. To close the paper, we determine sharp H\"older exponents of parameterizations in the class of connected self-affine Bedford-McMullen carpets and build parameterizations of self-affine sponges. An interesting phenomenon emerges in the self-affine setting. While the optimal parameter $s$ for a self-similar curve in $\R^n$ is always at most the ambient dimension $n$, the optimal parameter $s$ for a self-affine curve in $\R^n$ may be strictly greater than $n$.
\end{abstract}

\maketitle

\vspace{-.45in}

\tableofcontents

\section{Introduction}

A special feature of one-dimensional metric geometry is the compatibility of intrinsic and extrinsic measurements of the length of a curve. Indeed, a theorem of Wa\.{z}ewski \cite{Wazewski} from the 1920s asserts that in a metric space a connected, compact set $\Gamma$ admits a continuous parameterization of finite total variation (intrinsic length) if and only if the set has finite one-dimensional Hausdorff measure $\Haus^1$ (extrinsic length). In fact, any curve of finite length admits  parameterizations $f:[0,1]\rightarrow\Gamma$, which are closed, Lipschitz, surjective, degree zero, constant speed, essentially two-to-one, and have total variation equal to $2\Haus^1(\Gamma)$; see Alberti and Ottolini \cite[Theorem 4.4]{AO-curves}. Unfortunately, this property---compatibility of intrinsic and extrinsic measurements of size---breaks down for higher-dimensional curves. While every curve parameterized by a continuous map of finite $s$-variation has finite $s$-dimensional Hausdorff measure $\Haus^s$, for each real-valued dimension $s>1$ there exist curves with $0<\Haus^s(\Gamma)<\infty$  that cannot be parameterized by a  continuous map of finite $s$-variation; e.g.~ see the ``Cantor ladders" in \cite[\S9.2]{BNV}. Beyond a small zoo of examples, there does not yet exist a comprehensive theory of curves of dimension greater than one. Partial investigations on H\"older geometry of curves from a geometric measure theory perspective include \cite{MM1993}, \cite{MM2000}, \cite{RZ16}, \cite{BV}, \cite{BNV}, and \cite{Balogh-Zust} (also see \cite{ident}). For example, in \cite{BNV} with Naples, we established a Wa\.{z}ewski-type theorem for higher-dimensional curves under an additional geometric assumption (flatness), which is satisfied e.g.~by von Koch snowflakes with small angles. The fundamental challenge is to develop robust methods to build good parameterizations.

Two well-known examples of higher-dimensional curves with H\"older parameterizations are the von Koch snowflake and the square (a space-filling curve). A common feature is that both examples can be viewed as the attractors of iterated function systems (IFS) in Euclidean space that satisfy the open set condition (OSC); for a quick review of the theory of IFS, see \S\ref{sec:prelim}. Remes \cite{Remes} proved that this observation is generic in so far as every connected self-similar set in Euclidean space of Hausdorff dimension $s\geq 1$ satisfying the OSC is a \emph{$(1/s)$-H\"older curve}, i.e.~ the image of a continuous map $f:[0,1]\rightarrow\R^n$ satisfying \begin{equation*} |f(x)-f(y)| \leq H|x-y|^{1/s}\quad\text{for all }x,y\in[0,1]\end{equation*} for some constant $H<\infty$. As an immediate consequence, for every integer $n\geq 2$ and real number $s\in(1,n]$, we can easily generate a plethora of examples of $(1/s)$-H\"older curves in $\R^n$ with $0<\Haus^s(\Gamma)<\infty$ (see Figure \ref{self-similar-scheme}).
\begin{figure}[th]
\begin{center}\includegraphics[width=1.5in]{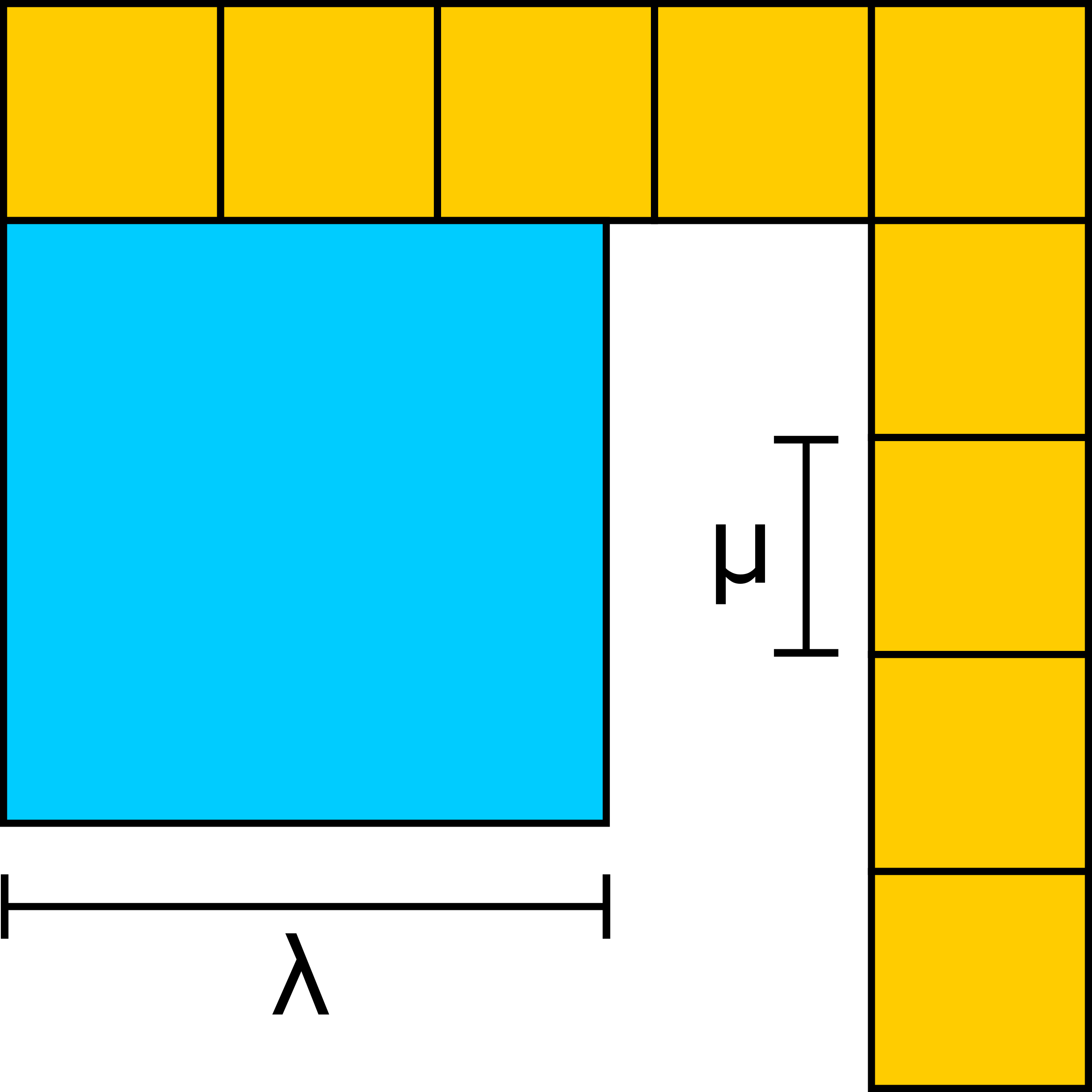}\quad\quad \includegraphics[width=1.5in]{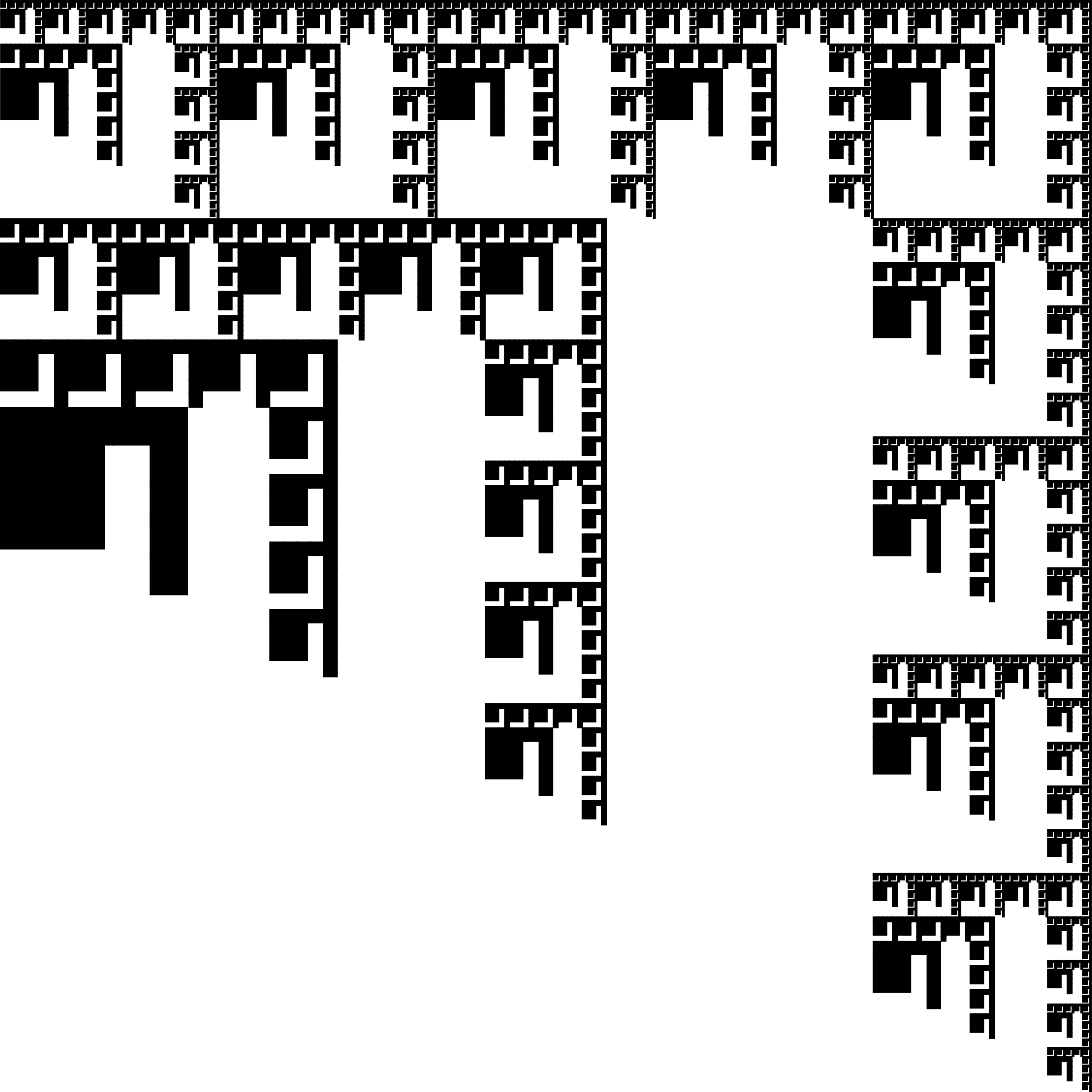}\end{center}
\caption{First and fourth iterations generating a self-similar $(1/s)$-H\"older curve $\Gamma$ in $\R^2$ with $0<\Haus^s(\Gamma)<\infty$; adjusting $\lambda\in[0,1-\mu]$ and $\mu=1/k$ (where $k\geq 2$ is an integer) yields examples of every dimension $s\in(1,2]$.}\label{self-similar-scheme}
\end{figure} However, with the view of needing a better theory of curves of dimension greater than one, we may ask whether Remes' method is flexible enough to generate H\"older curves under less stringent requirements, e.g.~can we parameterize self-similar sets in metric spaces or arbitrary connected IFS? The naive answer to this question is no, in part because measure-theoretic properties of IFS attractors in general metric or Banach spaces are less regular than in Euclidean space (see Schief \cite{Schief2}). Nevertheless, combining ideas from Remes \cite{Remes} and Badger-Vellis \cite{BV} (or Badger-Schul \cite{BS2}), we establish the following pair of results in the general metric setting. We emphasize that Theorems \ref{thm:connect} and \ref{thm:main} do not require the IFS to be generated by similarities nor do they require the OSC. In the statement of the theorems, extending usual terminology for self-similar sets, we say that the \emph{similarity dimension} of an IFS generated by contractions $\mathcal{F}$ is the unique number $s$ such that \begin{equation}\label{eq:sim} \sum_{\phi\in\mathcal{F}} (\Lip \phi)^s=1,\end{equation} where $\Lip \phi=\sup_{x\neq y} \dist(\phi(x),\phi(y))/\dist(x,y)$ is the Lipschitz constant of $\phi$.

\begin{thm}[H\"older connectedness]\label{thm:connect} Let $\mathcal{F}$ be an IFS over a complete metric space; let $s$ be the similarity dimension of $\mathcal{F}$. If the attractor $K_\mathcal{F}$ is connected, then every pair of points is connected in $K_\mathcal{F}$ by a $(1/s)$-H\"older curve.
\end{thm}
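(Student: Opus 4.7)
The plan is to adapt the iterative chain-and-refine construction of Remes to the metric IFS setting: I build $f \colon [0,1] \to K$ with $f(0) = x$ and $f(1) = y$ as a limit of polygonal chains through ever finer pieces of $K$, then use the similarity-dimension equation $\sum r_i^s = 1$ (with $r_i := \Lip \phi_i$) to obtain the $(1/s)$-H\"older bound. Normalize $\diam K = 1$, and for a finite word $\mathbf{w}$ set $K_\mathbf{w} := \phi_\mathbf{w}(K)$ and $r_\mathbf{w} := \prod r_{w_j}$. Define the graph $G$ on $\{1, \ldots, N\}$ by $i \sim j$ iff $K_i \cap K_j \neq \emptyset$; since $K$ is connected, $G$ is connected of some diameter $M < \infty$. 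The same combinatorial structure is inherited at every level: if $K_c \cap K_{c'} \neq \emptyset$ then $K_{\mathbf{v} c} \cap K_{\mathbf{v} c'} \supseteq \phi_\mathbf{v}(K_c \cap K_{c'}) \neq \emptyset$, so any two children of $K_\mathbf{v}$ are joined by a chain of at most $M + 1$ pieces.

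\textbf{Tree of chains and parameter intervals.} Given $x, y \in K$, I construct a rooted tree $\mathcal{T}$ of sub-problems with a nested system of intervals in $[0, 1]$. The root is $(x, y, \emptyset)$ with $I_\emptyset := [0, 1]$. At a node $(p, q, \mathbf{v})$ with $p \in K_{\mathbf{v} a}$ and $q \in K_{\mathbf{v} b}$, I pick a shortest $G$-chain $a = c_1, \ldots, c_m = b$, choose transition points $z_l \in K_{\mathbf{v} c_l} \cap K_{\mathbf{v} c_{l+1}}$, set $z_0 := p$ and $z_m := q$, and take the node's children to be $(z_{l-1}, z_l, \mathbf{v} c_l)$ for $l = 1, \ldots, m$. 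Simultaneously I partition $I_\mathbf{v}$ into adjacent subintervals $I_{\mathbf{v} c_l}$ (listed in chain order) with $|I_{\mathbf{v} c_l}| = |I_\mathbf{v}| \cdot r_{c_l}^s / \sigma_\mathbf{v}$, where $\sigma_\mathbf{v} := \sum_l r_{c_l}^s$. Because the indices $c_l$ are distinct elements of $\{1, \ldots, N\}$, $\sigma_\mathbf{v} \leq \sum_i r_i^s = 1$, and induction yields the central lower bound $|I_\mathbf{w}| \geq r_\mathbf{w}^s$ for every $\mathbf{w} \in \mathcal{T}$.

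\textbf{Limit map and H\"older estimate.} Along every infinite $\mathcal{T}$-branch, the diameters $r_\mathbf{w}$ decay geometrically, so by completeness $\{f(t)\} := \bigcap_k K_{\mathbf{w}^{(k)}(t)}$ is a single point, and the transition points at shared interval boundaries ensure two-sided limits agree, producing a continuous $f \colon [0, 1] \to K$ with $f(0) = x$, $f(1) = y$. For $0 \leq t < t' \leq 1$, set $\delta := t' - t$ and $\epsilon := \delta^{1/s}$, and look at the Moran stopping set $\mathcal{S}_\epsilon := \{\mathbf{w} \in \mathcal{T} : r_\mathbf{w} \leq \epsilon < r_{\mathbf{w}^-}\}$, whose intervals partition $[0, 1]$. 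Every $\mathbf{w} \in \mathcal{S}_\epsilon$ satisfies $r_\mathbf{w} \geq \epsilon\, r_{\min}$ (with $r_{\min} := \min_i r_i$), so $|I_\mathbf{w}| \geq r_\mathbf{w}^s \geq \delta\, r_{\min}^s$; hence $[t, t']$ meets at most $C := r_{\min}^{-s} + 2$ consecutive intervals of $\mathcal{S}_\epsilon$. Provided adjacent $\mathcal{S}_\epsilon$-pieces intersect, the triangle inequality then gives $|f(t) - f(t')| \leq C \epsilon = C \delta^{1/s}$, the desired $(1/s)$-H\"older bound.

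\textbf{Main obstacle.} The technical crux is verifying the last parenthetical: that two consecutive Moran intervals, which may sit at very different depths in $\mathcal{T}$, correspond to pieces whose closures meet. I plan to handle this by a descent through their deepest common ancestor $\mathbf{u} \in \mathcal{T}$: the two intervals descend from adjacent children $\mathbf{u} c_j$ and $\mathbf{u} c_{j+1}$ in $\mathbf{u}$'s sub-chain, realized as the rightmost, respectively leftmost, leaves of the corresponding subtrees, and both leaves inherit the transition point $z_j \in K_{\mathbf{u} c_j} \cap K_{\mathbf{u} c_{j+1}}$. Aside from this combinatorial bookkeeping, the only delicate point I anticipate is the inductive step for $|I_\mathbf{w}| \geq r_\mathbf{w}^s$, which is exactly where the similarity-dimension equation enters through $\sigma_\mathbf{v} \leq 1$.
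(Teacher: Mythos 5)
Your construction and the paper's share the same combinatorial skeleton---an inductively refined tree of chains of cylinder sets, with parameter intervals allocated so that $|I_{\mathbf{w}}| \geq r_{\mathbf{w}}^s$, which is exactly where the similarity-dimension equation $\sum r_i^s \leq 1$ enters---but the analytic mechanism you use to extract the H\"older bound is genuinely different from the paper's. The paper's Proposition~\ref{lem:Holderconn} embeds $K$ into $\ell_\infty$ via Kuratowski, builds piecewise-linear approximations $f_m$ in the ambient Banach space, verifies the two estimates $\Lip f_m \lesssim r^{m(1-s)}$ and $\|f_m - f_{m+1}\|_\infty \lesssim r^m$, and then invokes the general Lipschitz-to-H\"older convergence machinery of Lemma~\ref{l:LipHold}; the approximating curves live outside $K$ and only the limit lands inside. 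You instead define $f$ in one stroke as the map sending $t$ to the point of $\bigcap_k K_{\mathbf{w}^{(k)}(t)}$, observe $f(I_\mathbf{w}) \subset K_\mathbf{w}$, and obtain the H\"older estimate directly from a Moran cut set $\mathcal{S}_\epsilon$ at the scale $\epsilon = |t-t'|^{1/s}$: the lower bound on interval lengths caps the number of cut-set pieces meeting $[t,t']$, and the chain of intersecting pieces yields $|f(t)-f(t')| \lesssim \epsilon$. The one point you correctly identified as the technical crux---that consecutive Moran intervals, possibly at very different depths, correspond to cylinders whose closures actually meet---is handled correctly by your deepest-common-ancestor descent: the transition point $z_j$ at the ancestor propagates down the rightmost branch of the left subtree and the leftmost branch of the right subtree, so it lies in both cylinders. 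Your route has the advantage of never leaving the attractor and needing no ambient Banach space or abstract convergence lemma; the paper's route pays that overhead but obtains Lemma~\ref{l:LipHold} as a reusable tool that it then deploys again in the proofs of Theorems~\ref{thm:main} and~\ref{thm:remes}. Two small omissions worth patching: you should dispose of the degenerate cases (a single-point attractor, and generators with $\Lip \phi_i = 0$, handled in the paper via Lemma~\ref{lem:connIFS}) before normalizing $\diam K = 1$; and the existence of $\bigcap_k K_{\mathbf{w}^{(k)}(t)}$ as a single point is really a compactness-of-$K$ fact (finite intersection property plus shrinking diameters) rather than a completeness-of-$X$ fact, though of course compactness of $K$ ultimately rests on completeness of $X$.
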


\begin{thm}[H\"older parameterization]\label{thm:main} Let $\mathcal{F}$ be an IFS over a complete metric space; let $s$ be the similarity dimension of $\mathcal{F}$. If the attractor $K_{\mathcal{F}}$ is connected, then $K_\mathcal{F}$ is a $(1/\a)$-H\"older curve for every $\a>s$.
\end{thm}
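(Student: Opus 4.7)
The plan is to upgrade Theorem \ref{thm:connect} from point-to-point H\"older connection to a full parameterization by constructing $f\colon[0,1]\to K_\mathcal{F}$ as a uniform limit of piecewise approximations $f_n$ that trace out the level-$n$ IFS decomposition $K_\mathcal{F}=\bigcup_{|w|=n}\phi_w(K_\mathcal{F})$. Write $\phi_w=\phi_{i_1}\circ\cdots\circ\phi_{i_n}$ and $r_w=\Lip(\phi_{i_1})\cdots\Lip(\phi_{i_n})$ for $w=i_1\cdots i_n$, so that $\diam\phi_w(K_\mathcal{F})\leq r_w\diam(K_\mathcal{F})$ and $\sum_{|w|=n}r_w^s=1$ by (\ref{eq:sim}). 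The essential slack introduced by the strict inequality $\alpha>s$ is
\begin{equation*}
\sum_{|w|=n}r_w^\alpha \;\leq\; \Bigl(\max_{|w|=n}r_w\Bigr)^{\alpha-s}\sum_{|w|=n}r_w^s \;\longrightarrow\; 0 \qquad (n\to\infty).
\end{equation*}
This decay will replace the role of the open set condition in Remes' endpoint theorem by absorbing the combinatorial cost of traversing pieces that may overlap heavily.

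First I would build a compatible sequence of tours of the level-$n$ pieces. Since $K_\mathcal{F}$ is connected, the intersection graph on the level-$n$ pieces is connected (Hata's theorem), so a spanning tree and its Euler traversal yield, for each $n$, an ordered list of piece visits $w^{(1)},\ldots,w^{(M_n)}$ with consecutive pieces sharing a point (or, after Theorem \ref{thm:connect}, linked by a short H\"older bridge inside $K_\mathcal{F}$). The tours can be arranged so that the level-$(n+1)$ tour refines the level-$n$ tour. Partition $[0,1]$ into intervals $\{I_i\}$ indexed by the tour, with $|I_i|$ chosen comparable to $r_{w^{(i)}}^\alpha$ and appropriately normalized; the displayed decay is precisely what makes this normalization legitimate uniformly in $n$. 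On each $I_i$ define $f_n$ to be the $(1/s)$-H\"older arc inside $\phi_{w^{(i)}}(K_\mathcal{F})$ supplied by Theorem \ref{thm:connect}, connecting the entry and exit points prescribed by the tour. Since $\|f_n-f_{n+1}\|_\infty \leq 2\max_{|w|=n}r_w\diam(K_\mathcal{F})\to 0$, the sequence is Cauchy and converges uniformly to a continuous $f\colon[0,1]\to K_\mathcal{F}$; surjectivity onto $K_\mathcal{F}$ follows from density of the visited entry and exit points together with compactness.

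The hard part is the uniform $(1/\alpha)$-H\"older estimate for $f$. Within a single tour interval $I_i$ of length $\asymp r_w^\alpha$, the scaled $(1/s)$-H\"older arc from Theorem \ref{thm:connect} automatically satisfies a $(1/\alpha)$-H\"older bound with a uniform constant, thanks to the computation $|t|^{1/s}\leq r_w^{\alpha/s-1}|t|^{1/\alpha}$ on $|t|\leq r_w^\alpha$, which exactly cancels the $r_w$-scaling of the arc. For $x,y$ in distinct intervals, naive summation of piece diameters loses a combinatorial factor that can blow up in $n$, reflecting the fact that without OSC the tour may revisit pieces with unbounded multiplicity. To close the estimate one passes to the coarsest level $N$ at which $x$ and $y$ lie in the same level-$N$ piece or in consecutive pieces of the level-$N$ tour, and bounds $|f(x)-f(y)|$ by the diameter of the enclosing level-$N$ piece, which is $\lesssim|x-y|^{1/\alpha}$ by the choice $|I_i|\asymp r_{w^{(i)}}^\alpha$. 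This multi-scale organization---paralleling the parameterization machinery of Badger-Vellis \cite{BV} and Badger-Schul \cite{BS2}---is where $\alpha>s$ is used in an essential way: it is precisely the hypothesis that makes the multi-scale sum converge geometrically and absorbs the tour's multiplicity cost, which in Remes' endpoint theorem is instead handled by the OSC.
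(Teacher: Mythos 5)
Your proposal follows the same high-level roadmap as the paper: express the attractor as a multi-scale union of $(1/s)$-H\"older arcs supplied by Theorem \ref{thm:connect}, allocate parameter intervals of length $\asymp L_w^\alpha$ so that the arcs become $(1/\alpha)$-H\"older after reparameterization (your computation $|t|^{1/s}\leq r_w^{\alpha/s-1}|t|^{1/\alpha}$ on $|t|\leq r_w^\alpha$ is exactly the role of the $(s/\alpha)$-H\"older map $\psi_I$ in property (P5) of Lemma \ref{lem:tree-param}), and exploit the slack $\alpha>s$ to make the total parameter budget finite. The organizing ideas match the paper's.

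There is, however, a concrete gap in your scaffolding step. You propose to Euler-tour a spanning tree of the \emph{intersection graph} of the level-$n$ cylinders, and you assign each visit of a piece $w^{(i)}$ an interval of length $\asymp r_{w^{(i)}}^\alpha$. But then the total time at level $n$ is $\sum_i r_{w^{(i)}}^\alpha=\sum_{|w|=n}\deg_{T_n}(w)\,r_w^\alpha$, where $\deg_{T_n}$ is valence in the spanning tree. Without any separation hypothesis, the intersection graph can have vertices of arbitrarily large degree (many cylinders meeting at a common point), and every spanning tree must inherit that degree at any cut vertex. The decay $\sum_{|w|=n}r_w^\alpha\to 0$ does not bound $\sum_w\deg(w)r_w^\alpha$ unless the valences are controlled, so the normalization you assert ``the displayed decay is precisely what makes this normalization legitimate uniformly in $n$'' does not follow. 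Allocating $r_w^\alpha/\deg(w)$ per visit does not escape this either; it reintroduces the same multiplicity factor into the within-piece H\"older estimate. You correctly identify unbounded multiplicity as the danger, but the proposed fix (passing to the coarsest separating level in the final two-point estimate) controls only the output side of the bound, not the interval budget. A second, related gap is the claim that ``the tours can be arranged so that the level-$(n+1)$ tour refines the level-$n$ tour''; spanning trees of the intersection graphs at different levels need not be compatible, and arranging compatibility is a nontrivial construction in its own right.

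The paper avoids both issues by working with the abstract \emph{word} tree rather than the intersection graph. It fixes a basepoint $q\in K$, sets $q_w:=\phi_w(q)$, and connects $q_w$ to each child $q_u$, $u\in A_w^*(r^{m+1})$, by a $(1/s)$-H\"older arc $\gamma_{w,u}\subset K_w$ from Proposition \ref{lem:Holderconn}. The resulting metric tree $\Gamma_m$ is nested by construction and has valence bounded by $L_1^{-s}r^{-s}$ (Lemma \ref{6}), so the interval allocation (length $L_w^\alpha$ for each traversal of $\gamma_{w,u}$, and an explicit budget $M_w$ for the waiting interval at $q_w$ that is handed down to its descendants) is controlled exactly by the estimate \eqref{eq:3}. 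The paper also works at \emph{weight} scales $A^*(r^m)$ rather than fixed word length $n$, which keeps the diameters $\diam K_w$ comparable within a single level and makes the coarsest-separating-scale argument (your final step) go through cleanly. Your sketch could be completed by importing the word-tree structure and the budget $M_w$, but as written these two ingredients are exactly what is missing.
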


Early in the development of fractals, Hata \cite{Hata} proved that if the attractor $K_\mathcal{F}$ of an IFS over a complete metric space $X$ is connected, then $K_\mathcal{F}$ is locally connected and path-connected. By the Hahn-Mazurkiewicz theorem, it follows that if $K_\mathcal{F}$ is connected, then $K_\mathcal{F}$ is a \emph{curve}, i.e.~ $K_\mathcal{F}$ the image of a continuous map from $[0,1]$ into $X$. Theorems \ref{thm:connect} and \ref{thm:main}, which are our main results, can be viewed as a quantitative strengthening of Hata's theorem. We prove the two theorems directly, in \S\ref{sec:Holdcon}, without passing through Hata's theorem. A bi-H\"older variant of Theorem \ref{thm:connect} appears in Iseli and Wildrick's study \cite{Iseli-Wildrick} of self-similar arcs with quasiconformal parameterizations.

Roughly speaking, to prove Theorem \ref{thm:connect}, we embed the attractor $K_\mathcal{F}$ into $\ell_\infty$ and then construct a $(1/s)$-H\"older path between a given pair of points as the limit of a sequence of piecewise linear paths, mimicking the usual parameterization of the von Koch snowflake. Although the intermediate curves live in $\ell_\infty$ and not necessarily in $K_\mathcal{F}$, each successive approximation becomes closer to $K_\mathcal{F}$  in the Hausdorff metric so that the final curve is entirely contained in the attractor. Building the sequence of intermediate piecewise linear paths is a straightforward application of connectedness of an abstract word space associated to the IFS. The essential point to ensure the limit map is H\"older is to estimate the growth of the Lipschitz constants of the intermediate maps (see \S\ref{sec:Hparamcriterion} for an overview). Condition \eqref{eq:sim} gives us a natural way to control the growth of the Lipschitz constants, and thus, the similarity dimension determines the H\"older exponent of the limiting map (see \S\ref{sec:Holdcon}).  A similar technique allows us to parameterize the whole attractor of an IFS \emph{without branching} by a $(1/s)$-H\"older arc (see \S\ref{IFSsnowflakes}).

To prove Theorem \ref{thm:main}, we view the attractor $K_\mathcal{F}$ as the limit of a sequence of metric trees $\mathcal{T}_1\subset\mathcal{T}_2\subset\cdots$ whose edges are $(1/s)$-H\"older curves. Using condition \eqref{eq:sim}, one can easily show that \begin{equation}\label{e:treesup} S_\alpha:=\sup_{n} \sum_{E\in\mathcal{T}_n} (\diam E)^\alpha<\infty\quad\text{for all $\alpha>s$.}\end{equation} We then prove (generalizing a construction from \cite[\S2]{BV}) that \eqref{e:treesup} ensures $K_\mathcal{F}$ is a $(1/\alpha)$-H\"older curve for all $\alpha>s$. Unfortunately, because the constants $S_\alpha$ in \eqref{e:treesup} diverge as $\alpha\downarrow s$, we cannot use this method to obtain a H\"older parameterization at the endpoint. We leave the question of whether or not one can always take $\alpha=s$ in Theorem \ref{thm:main} as an open problem. The central issue is find a good way to control the growth of Lipschitz or H\"older constants of intermediate approximations for connected IFS \emph{with branching}.

For self-similar sets with positive $\Haus^s$ measure, we can build H\"older parameterizations at the endpoint in Theorem \ref{thm:main}. The following theorem should be attributed to Remes \cite{Remes}, who established the result for self-similar sets in Euclidean space, where the condition $\Haus^s(K_\mathcal{F})>0$ is equivalent to the OSC (see Schief \cite{Schief}). In metric spaces, it is known that $\Haus^s(K_\mathcal{F})>0$ implies the (strong) open set condition, but not conversely (see Schief \cite{Schief2}). A key point is that self-similar sets $K_\mathcal{F}$ with positive $\Haus^s$ measure are necessarily \emph{Ahlfors $s$-regular}, i.e.~$r^s\lesssim \Haus^s(K_\mathcal{F}\cap B(x,r))\lesssim r^s$ for all balls $B(x,r)$ centered on $K_\mathcal{F}$ with radius $0<r\lesssim \diam K_\mathcal{F}$. This fact is central to Remes' method for parameterizing self-similar sets with branching. See \S\ref{sec:proof} for a details.

\begin{thm}[H\"older parameterization for self-similar sets]\label{thm:remes} Let $\mathcal{F}$ be an IFS over a complete metric space that is generated by similarities; let $s$ be the similarity dimension of $\mathcal{F}$. If the attractor $K_\mathcal{F}$ is connected and $\Haus^s(K_\mathcal{F})>0$, then $K_\mathcal{F}$ is a $(1/s)$-H\"older curve.\end{thm}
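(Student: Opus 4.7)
The plan is to implement Remes' combinatorial construction \cite{Remes} in the metric setting, exploiting the Ahlfors regularity provided by the hypothesis $\Haus^s(K_\mathcal{F})>0$. Writing $r_\phi = \Lip \phi$ for $\phi\in\mathcal{F}$, $r_w = r_{\phi_1}\cdots r_{\phi_n}$ for a word $w=\phi_1\cdots\phi_n$, and $K_w=\phi_w(K_\mathcal{F})$, the similarity hypothesis gives $\diam K_w = r_w \diam K_\mathcal{F}$ and $\Haus^s(K_w)=r_w^s\Haus^s(K_\mathcal{F})$, while the definition of similarity dimension gives $\sum_{|w|=n} r_w^s=1$ for every $n$. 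As noted in the introduction, the standing hypothesis and Schief \cite{Schief2} yield that $K_\mathcal{F}$ is Ahlfors $s$-regular, which is the only new quantitative input beyond what was used in Theorems \ref{thm:connect} and \ref{thm:main}.

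The first key step is \emph{bounded multiplicity of cylinders at a common scale}. For $r\in(0,\diam K_\mathcal{F}]$, let $\mathcal{P}_r$ denote the antichain of words $w$ with $r_w \diam K_\mathcal{F} \leq r < r_{w|_{|w|-1}}\diam K_\mathcal{F}$, so the cylinders $\{K_w : w\in\mathcal{P}_r\}$ cover $K_\mathcal{F}$ and each has diameter and $\Haus^s$-measure comparable to $r$ and $r^s$ respectively. Packing into a ball of radius $\approx r$ centered on $K_\mathcal{F}$ combined with Ahlfors regularity then yields a constant $N=N(\mathcal{F})<\infty$ such that for every $x\in K_\mathcal{F}$, at most $N$ cylinders in $\mathcal{P}_r$ meet the ball $B(x,r)$. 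This is the metric-space analogue of Remes' Euclidean estimate that uses the open set condition.

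Next, I would construct a sequence of parameterizations $f_n:[0,1]\to K_\mathcal{F}$ inductively. At level $n$, using connectedness of $K_\mathcal{F}$ together with the bounded-multiplicity estimate, build a walk visiting every cylinder $K_w$ with $|w|=n$ so that consecutive cylinders in the walk intersect and each cylinder is visited at most $N$ times. The map $f_n$ is then defined to parameterize this walk so that the subinterval of $[0,1]$ associated to each visit of $K_w$ has length proportional to $r_w^s$; total time sums to a universal constant because $\sum_{|w|=n} r_w^s=1$ and the multiplicity is at most $N$. The refinement $f_{n+1}$ is obtained by replacing $f_n$ on each such subinterval by a walk through the sub-cylinders of $K_w$ inside $\phi_w(K_\mathcal{F})$, which inherits the same structure by self-similarity. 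Nesting forces uniform convergence to $f:[0,1]\to K_\mathcal{F}$ whose image is $K_\mathcal{F}$. For $t<t'$ with $t'-t$ comparable to $r_w^s$, the image $f([t,t'])$ is covered by a bounded number of cylinders at scale $r_w$, yielding $\dist(f(t),f(t'))\lesssim r_w \diam K_\mathcal{F}\approx (t'-t)^{1/s}$, i.e.\ the desired $(1/s)$-H\"older estimate at the endpoint $\alpha=s$ of Theorem \ref{thm:main}.

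The main technical obstacle is the combinatorial construction of the walks, with uniformly bounded multiplicity and compatibility across levels. In spirit this is a controlled Eulerian traversal of the incidence graph of cylinders at each scale, whose maximum valence is bounded by the first step. Unlike the Euclidean OSC setting of \cite{Remes}, where one can work with the nearly disjoint interiors of cylinders and leverage geometric separation between non-adjacent pieces, in a general complete metric space one must work directly with the abstract incidence combinatorics, using Ahlfors regularity as the sole quantitative replacement. The heart of the proof is therefore to organize the walks into a coherent nested sequence so that $f_{n+1}$ genuinely refines $f_n$ while respecting both the valence bound $N$ and the proportional time allocations at every scale.
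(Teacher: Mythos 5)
Your high-level ideas — Ahlfors $s$-regularity from $\Haus^s>0$, bounded multiplicity of cylinders at a common scale via a packing argument, walks through cylinders with time allocated proportional to $r_w^s$ — are correct and are indeed the raw ingredients of Remes' method. But the proposal elides the one genuinely hard step, and in a way that I believe is unrepairable in the form you describe.

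The problem is the claim that a forward-nested sequence of walks $f_1, f_2, f_3,\dots$ can be constructed with each cylinder visited at most $N$ times, uniformly in $n$. When $K_\mathcal{F}$ has branching, a walk through the sub-cylinders of a single cylinder $K_w$ is a depth-first traversal of a tree, which necessarily visits any sub-cylinder of degree $d$ roughly $d$ times. Now if $f_n$ already visits $K_w$ some $m$ times, the refinement $f_{n+1}$ restricted to those $m$ passes through $K_w$ must, at each pass, enter and exit at prescribed boundary points; resolving this typically forces additional passes through sub-cylinders, so the multiplicity of a level-$(n+1)$ cylinder can be as large as $m$ times the branching of the local tree. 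This compounds across levels and the total time $\sum_{|w|=n}(\text{multiplicity of }K_w)\cdot r_w^s$ diverges. Your sentence ``nesting forces uniform convergence'' is precisely where this loss occurs, and your closing paragraph acknowledges the obstacle without resolving it. This is not a cosmetic gap: controlling multiplicity across infinitely many scales simultaneously \emph{is} the content of Remes' theorem, and the coarse packing bound alone does not deliver it.

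The paper's proof sidesteps this by reversing the direction of the construction. For each fixed $N$, one builds a nested sequence of nets $V_1\subset\cdots\subset V_N$ (constructed \emph{backwards} from a fixed fine level $Y_N$, see Remark \ref{rem:aa2}), a single spanning tree $T_N$ on $V_N$ whose edges join adjacent cylinders at scale $r^N$, and then a single $2$-to-$1$ Eulerian tour of $T_N$. Because the tour traverses a fixed tree, the multiplicity is automatic: every edge exactly twice, every vertex as many times as its tree-valence. The role of Ahlfors regularity enters not through a naive packing bound but through Remes' branching lemma (Lemma \ref{lem:Remes}), which controls the number of branches off a road both from above and below; this is what lets one build the coarse-to-fine parameterizations $f_1,\dots,f_N$ of $T_N$ so that each interval at level $m$ carries a proportion $\asymp r^{ms}$ of the total edges (property (P6) in Lemma \ref{prelim}). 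Finally, since the $V_m$ depend on $N$, one does not get a single convergent forward refinement; the paper must apply Arzel\`a--Ascoli to the uniformly $(1/s)$-H\"older family $F_N$ and pass to a subsequential limit (Proposition \ref{prop:Remes} and Remark \ref{r:aa}). Your proposal would need all three of these structural ideas — the fixed tree $T_N$, the branching lemma, and the subsequential limit — before it could close the gap you flagged.
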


As a case study, in \S\ref{sec:carpets}, to further illustrate the results above, we determine the sharp H\"older exponents in parameterizations of connected self-affine Bedford-McMullen carpets. We also build parameterizations of connected self-affine sponges in $\R^n$ (see Corollary \ref{cor:sponge}).

\begin{figure}[th]
\begin{center}\includegraphics[width=5in]{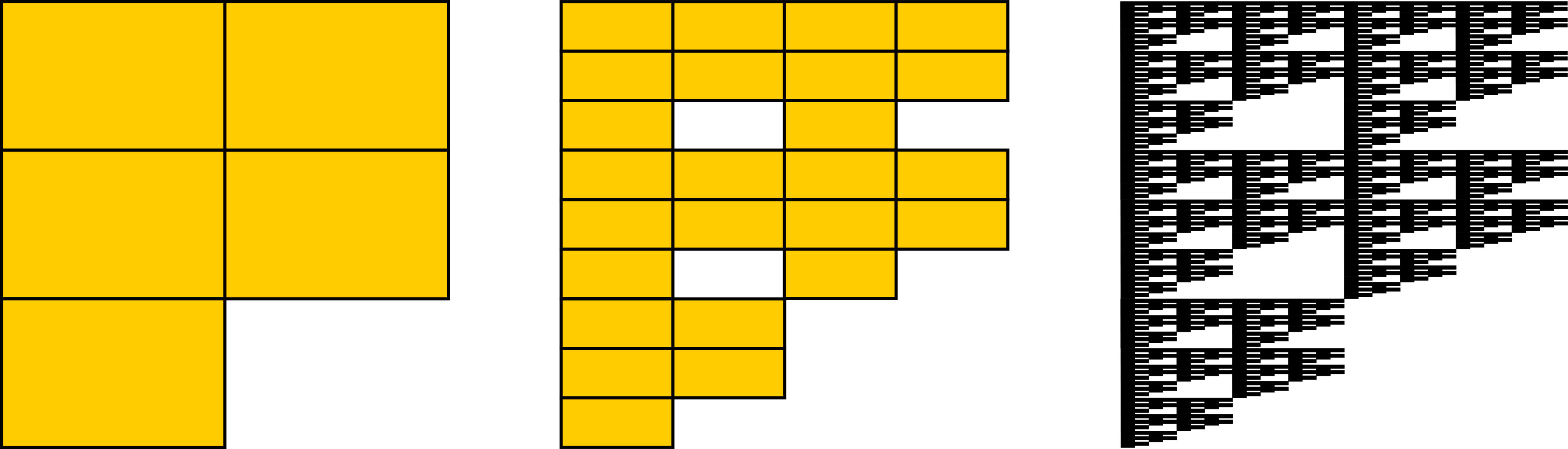}\end{center}
\caption{First, second, and fifth iterations of a Bedford-McMullen carpet $\Sigma$ that is a self-affine $(1/s)$-H\"older curve (with $\Haus^s(\Sigma)=0$) precisely when $s\geq \log_2(5)>2$.}\label{fig:mc23}
\end{figure}

\begin{thm}\label{thm:carpets2}Let $\Sigma\subset[0,1]^2$ be a connected Bedford-McMullen carpet (see \S \ref{sec:carpets}). \begin{itemize}
\item If $\Sigma$ is a point, then $\Sigma$ is (trivially) an $\alpha$-H\"older curve for all $\alpha>0$.
\item If $\Sigma$ is a line, then $\Sigma$ is (trivially) a 1-H\"older curve.
\item If $\Sigma$ is the square, then $\Sigma$ is (well-known to be) a $(1/2)$-H\"older curve.
\item Otherwise, $\Sigma$ is a $(1/s)$-H\"older curve, where $s$ is the similarity dimension of $\Sigma$.
\end{itemize} The H\"older exponents above are sharp, i.e.~ they cannot be increased.
\end{thm}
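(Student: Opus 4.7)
The four cases split naturally. For the trivial ones: a single point is $(1/\alpha)$-H\"older for every $\alpha>0$ and sharpness is vacuous; an affine parameterization of a segment is $1$-H\"older, and no non-constant map satisfies $|f(x)-f(y)|\le H|x-y|^{1/\alpha}$ with $\alpha<1$, so the exponent is sharp; the unit square admits the classical $(1/2)$-H\"older Peano curve, and any $(1/\alpha)$-H\"older surjection onto a set of positive $\Haus^2$-measure forces $\alpha\ge 2$ via the estimate $\Haus^\alpha(f([0,1]))\le H^\alpha$.

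The substance of the theorem lies in the fourth case. Each generator $\phi_{(i,j)}(x,y)=(x/m+i/m,\,y/n+j/n)$ has Euclidean Lipschitz constant $1/m$ (since $m\le n$), so the similarity dimension $s$ satisfies $|D|\cdot m^{-s}=1$ and $s=\log_m|D|$. Theorem \ref{thm:main} already yields a $(1/\alpha)$-H\"older parameterization for every $\alpha>s$; the remaining work is to push to the endpoint $\alpha=s$ and to rule out any $\alpha<s$.

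For the upper bound at the endpoint, I iterate the IFS $k$ times and obtain $|D|^k$ affine copies of $\Sigma$, each enclosed in a rectangle of dimensions $m^{-k}\times n^{-k}$ and diameter $\le\sqrt{2}\,m^{-k}$. Using connectedness together with the non-degeneracy of case (iv), I construct nested tree approximations $\mathcal{T}_k\subset\Sigma$ whose $O(|D|^k)$ edges have diameter $O(m^{-k})$, so that
\[
\sum_{E\in\mathcal{T}_k}(\diam E)^s \;\le\; C\,|D|^k\,m^{-sk} \;=\; C
\]
is bounded uniformly in $k$ at the critical exponent $s=\log_m|D|$. The strict inequality $\alpha>s$ in Theorem \ref{thm:main} was needed only to make a geometric series converge; I reach the endpoint by replacing the generic construction with the Remes-type balanced construction in the proof of Theorem \ref{thm:remes}, assigning each depth-$k$ cylinder a time interval of length exactly $|D|^{-k}$ and interpolating between consecutive cylinders using short paths of diameter $O(m^{-k})$. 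A careful inductive control of the H\"older constants produces a $(1/s)$-H\"older parameterization of $\Sigma$.

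The main obstacle will be sharpness. Hausdorff and box dimensions of $\Sigma$ can be strictly less than $s$---for the carpet in Figure \ref{fig:mc23} one has $\dim_H\Sigma<2<s=\log_2 5$---so the classical bound $\dim_H f([0,1])\le\alpha$ only yields $\alpha\ge\dim_H\Sigma$, not $\alpha\ge s$. My plan is a direct counting argument at depth $k$: given $f$ with $|f(x)-f(y)|\le H|x-y|^{1/\alpha}$, partition $[0,1]$ into $N\approx H^\alpha m^{\alpha k}$ subintervals whose images have diameter $\le m^{-k}$. Since each ball of radius $m^{-k}$ meets at most $O((n/m)^k)$ depth-$k$ cylinders, the requirement that $f$ hit all $|D|^k$ of them gives only the weaker bound $\alpha\ge 1+\log_m(|D|/n)=s-(\log_m n-1)$. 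This matches $\alpha\ge s$ exactly when $m=n$ (the self-similar case), but leaves a genuine gap for $m<n$---the concrete manifestation of the ``self-affine phenomenon'' highlighted in the introduction. Closing this gap requires exploiting the structural assumption of case (iv) to extract, at each depth $k$, a family of $\Omega(|D|^k)$ cylinders that are pairwise $m^{-k}$-separated in spite of the anisotropic $m^{-k}\times n^{-k}$ geometry; this forces $(|D|/m^\alpha)^k=O(1)$ and hence $\alpha\ge s$. Producing such a separated family, using only that $\Sigma$ is neither a single row/column nor the full square, is the delicate combinatorial heart of the proof.
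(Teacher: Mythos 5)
Your setup and your identification of the two obstacles are both on target, but both halves of your plan have genuine gaps.

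\textbf{Existence at the endpoint.} You propose to reach $\alpha = s$ by running the Remes-type balanced construction from the proof of Theorem \ref{thm:remes} directly on $\Sigma$ in $\R^2$. This cannot work as stated: Remes' method hinges on the branching lemma, which in turn requires $\Haus^s(K)>0$ and hence Ahlfors $s$-regularity of the attractor. For a genuinely self-affine carpet (with $n_1 < n_2$) one has $\dim_H \Sigma < s$, so $\Haus^s(\Sigma)=0$ and the carpet is nowhere near Ahlfors $s$-regular in the Euclidean metric; the branching counts in Lemma \ref{lem:Remes} fall apart. The paper sidesteps this with a distinct idea that your proposal misses entirely: snowflake the metric coordinate-by-coordinate so that $d\bigl((x_1,x_2),(y_1,y_2)\bigr) = \bigl(|x_1-y_1|^2 + |x_2-y_2|^{2\log_{n_2} n_1}\bigr)^{1/2}$. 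In the snowflaked space each $\phi_{i,j}$ becomes a genuine similarity of ratio $n_1^{-1}$, the similarity dimension is unchanged, the open set condition (hence SOSC) holds with $U=(0,1)^2$, and Stella's theorem gives $\Haus^s>0$ because the doubling metric is a $\beta$-space. Now Theorem \ref{thm:remes} applies, and since the identity from the snowflaked carpet to the Euclidean carpet is $1$-Lipschitz, the H\"older parameterization descends (Lemma \ref{lem:snow}, Corollary \ref{cor:sponge}). Your bound $\sum_E (\diam E)^s \le C$ is correct but, as you note, does not give a convergent telescoping series; simply invoking ``a Remes-type balanced construction'' does not resolve this without the measure positivity that the lift provides.

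\textbf{Sharpness.} You correctly see that the naive Hausdorff-measure inequality only yields $\alpha \ge \dim_H\Sigma$, and you correctly flag that the heart of the matter is to extract roughly $\card(A)^m$ sets that force disjoint parameter intervals of length $\gtrsim n_1^{-\alpha m}$. But you stop at ``this is the delicate combinatorial heart of the proof'' without supplying it. The paper's mechanism is to use Lemma \ref{lem:specialbox}: since $\Sigma$ is neither the square nor a vertical line, some $(i_0,j_0)\in A$ has the rectangle directly above (or below) omitted. For each $w\in A^m$ one assembles a short vertical ``column'' $\tilde S_w$ of depth-$(m+1)$ boxes capped by this gap, so the columns have disjoint interiors, $\Sigma\cap\tilde S_w$ crosses the column horizontally (Corollary \ref{cor:connectedcarpets}), and $\Sigma$ cannot exit $\tilde S_w$ through the top or bottom. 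Consequently every surjection $f:[0,1]\to\Sigma$ must contain, in the preimage, a distinct interval $I_w$ whose image has diameter $\gtrsim n_1^{-(m+1)}$; summing $|I_w|\gtrsim H^{-\alpha}n_1^{-\alpha(m+1)}$ over the $\card(A)^m$ words gives $\alpha \ge \log_{n_1}\card(A) = s$. This column construction is exactly the ``pairwise separated family'' you know you need. Without it, your proof of sharpness is incomplete.
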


Of some note, the best H\"older exponent $1/s$ in parameterizations of a self-affine carpet can be strictly less than 1/2 (see Figure \ref{fig:mc23}). A similar phenomenon occurs for self-affine arcs in $\R^2$ (see \S\ref{sec:sharpsnow}). We interpret this as follows:

If the supremum of all exponents appearing in the set of H\"older parameterizations of a curve $\Gamma$ in a metric space is $1/s$, then we may say that $\Gamma$ has \emph{parameterization dimension} $s$. (If $\Gamma$ admits no H\"older parameterizations, then we say that $\Gamma$ has infinite parameterization dimension.) Intuitively, the parameterization dimension is a rough gauge of how fast a denizen of a curve must walk to visit every point in the curve. Every non-degenerate rectifiable curve has parameterization dimension 1 and a square has parameterization dimension 2. More generally, every self-similar curve in $\R^2$ that satisfies the OSC has parameterization dimension equal to its Hausdorff dimension. Theorem \ref{thm:carpets2} implies that there exist self-affine curves in $\R^2$ of arbitrarily large parameterization dimension.

\section{Preliminaries}\label{sec:prelim}

\subsection{Iterated function systems}\label{sec:IFS} Let $X$ be a complete metric space. A \emph{contraction} in $X$ is a Lipschitz map $\phi:X\rightarrow X$ with Lipschitz constant $\Lip\phi<1$, where \begin{equation}\Lip\phi:=\sup_{x\neq y} \frac{\dist(\phi(x),\phi(y))}{\dist(x,y)}\in[0,\infty].\end{equation} An \emph{iterated function system (IFS)} $\mathcal{F}$ is a finite collection of contractions in $X$. We say that $\mathcal{F}$ is \emph{trivial} if $\Lip\phi=0$ for every $\phi\in\mathcal{F}$; otherwise, we say that $\mathcal{F}$ is \emph{non-trivial}. The \emph{similarity dimension} $\sdim(\mathcal{F})$ of $\mathcal{F}$ is the unique number $s$ such that \begin{equation} \sum_{\phi\in\mathcal{F}}(\Lip \phi)^s=1,\end{equation} with the convention $\sdim(\mathcal{F})=0$ whenever $\mathcal{F}$ is trivial. Iterated function systems were introduced by Hutchinson \cite{Hutchinson} and encode familiar examples of fractal sets such as the Cantor ternary set, Sierpi\'nski carpet, and Sierpi\'nski gasket. For an extended introduction to IFS, see Kigami's \emph{Analysis on Fractals} \cite{Kigami}. Hutchinson's original paper as well as Hata's paper \cite{Hata} are gems in geometric analysis and excellent introductions to the subject in their own right.

\begin{thm}[Hutchinson \cite{Hutchinson}] If $\mathcal{F}$ is an IFS over a complete metric space, then there exists a unique compact set $K_\mathcal{F}$ in $X$ (the \emph{attractor} of $\mathcal{F}$) such that \begin{equation} K_\mathcal{F} = \bigcup_{\phi\in\mathcal{F}}\phi(K_\mathcal{F}).\end{equation} Furthermore, if $s=\sdim(\mathcal{F}$), then $\Haus^s(K_\mathcal{F})\leq (\diam K_\mathcal{F})^s<\infty$ and $\dim_H(K_\mathcal{F})\leq s$.\end{thm}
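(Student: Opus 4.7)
The plan is to follow Hutchinson's classical argument: realize $K_\mathcal{F}$ as the fixed point of a natural contraction on the hyperspace of compact sets, then use the iterates of that contraction as an efficient cover to control $\Haus^s$.

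First, I would work in $\mathcal{K}(X)$, the space of nonempty compact subsets of $X$ equipped with the Hausdorff distance $d_H$. Since $X$ is complete, $(\mathcal{K}(X),d_H)$ is complete. Define the Hutchinson operator
\begin{equation*}
F:\mathcal{K}(X)\to\mathcal{K}(X),\qquad F(A):=\bigcup_{\phi\in\mathcal{F}}\phi(A),
\end{equation*}
which is well-defined because each $\phi(A)$ is compact and $\mathcal{F}$ is finite. A routine two-sided inclusion argument gives
\begin{equation*}
d_H\!\left(\bigcup_i A_i,\bigcup_i B_i\right)\le \max_i d_H(A_i,B_i),
\end{equation*}
which combined with the contraction property of each $\phi$ yields $d_H(F(A),F(B))\le c\, d_H(A,B)$ with $c:=\max_{\phi\in\mathcal{F}}\Lip\phi<1$. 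Banach's fixed point theorem then produces a unique $K_\mathcal{F}\in\mathcal{K}(X)$ with $F(K_\mathcal{F})=K_\mathcal{F}$.

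Second, for the Hausdorff-measure bound, assume $\mathcal{F}$ is non-trivial (if it is trivial, $K_\mathcal{F}$ is a finite set, $s=0$, and the claim is immediate). For each word $w=(i_1,\ldots,i_n)$ set $\phi_w:=\phi_{i_1}\circ\cdots\circ\phi_{i_n}$, so $\Lip\phi_w\le\prod_{j=1}^n\Lip\phi_{i_j}$. Iterating the fixed-point identity,
\begin{equation*}
K_\mathcal{F}=F^n(K_\mathcal{F})=\bigcup_{|w|=n}\phi_w(K_\mathcal{F}),\qquad \diam\phi_w(K_\mathcal{F})\le \Lip\phi_w\cdot\diam K_\mathcal{F}.
\end{equation*}
A one-line induction on $n$ using the definition of similarity dimension yields
\begin{equation*}
\sum_{|w|=n}(\Lip\phi_w)^s=\left(\sum_{\phi\in\mathcal{F}}(\Lip\phi)^s\right)^{\!n}=1,
\end{equation*}
hence
\begin{equation*}
\sum_{|w|=n}(\diam\phi_w(K_\mathcal{F}))^s\le (\diam K_\mathcal{F})^s\quad\text{for every }n\ge 1.
\end{equation*}
Since $\max_{|w|=n}\diam\phi_w(K_\mathcal{F})\le c^n\diam K_\mathcal{F}\to 0$, the covers $\{\phi_w(K_\mathcal{F})\}_{|w|=n}$ are admissible in the definition of $\Haus^s_\delta$ for every $\delta>0$ when $n$ is large. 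Letting $n\to\infty$ gives $\Haus^s(K_\mathcal{F})\le(\diam K_\mathcal{F})^s<\infty$, and consequently $\dim_H K_\mathcal{F}\le s$.

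The main technical step is verifying that $F$ is a strict contraction in $d_H$; this is standard but requires the max inequality for Hausdorff distance of unions together with the Lipschitz bounds on the $\phi$'s. Once that is in hand, the remainder of the argument is a bookkeeping exercise: the single algebraic identity $\sum_\phi(\Lip\phi)^s=1$ is precisely what makes the iterated covers of $K_\mathcal{F}$ have uniformly bounded $s$-content, which is exactly what is needed to bound $\Haus^s(K_\mathcal{F})$.
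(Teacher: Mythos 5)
The paper states this as Hutchinson's theorem with a citation only and does not include a proof, so there is no in-paper argument to compare against. Your proof reproduces the classical Hutchinson construction---Banach's fixed-point theorem on the hyperspace $(\mathcal{K}(X),d_H)$ of nonempty compact sets, followed by the standard covering estimate using the generation-$n$ cylinders $\{\phi_w(K_\mathcal{F})\}_{|w|=n}$---and the structure of the argument is correct.

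Two small points. First, after correctly observing $\Lip\phi_w\le\prod_{j}\Lip\phi_{i_j}$ (which is only an inequality for general contractions), you assert the \emph{equality} $\sum_{|w|=n}(\Lip\phi_w)^s=\bigl(\sum_{\phi}(\Lip\phi)^s\bigr)^n=1$. The multinomial identity applies to the products $\prod_j(\Lip\phi_{i_j})^s$, and what follows for the compositions is only $\sum_{|w|=n}(\Lip\phi_w)^s\le 1$. This is harmless because the very next step uses only the upper bound, but the display should read $\le$. Second, your dismissal of the trivial case as ``immediate'' overstates matters: if every $\phi\in\mathcal{F}$ is a constant map and at least two images are distinct, then $s=0$ by convention, $\Haus^0(K_\mathcal{F})=\card K_\mathcal{F}\ge 2$, yet $(\diam K_\mathcal{F})^0=1$, so the stated inequality actually fails. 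This is really a degeneracy in the theorem's formulation rather than a flaw in your main argument, but it is worth flagging rather than waving through.
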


Above and below, the \emph{$s$-dimensional Hausdorff measure} $\Haus^s$ on a metric space is the Borel regular outer measure defined by \begin{equation}\Haus^s(E)=\lim_{\delta\downarrow 0} \inf\left\{\sum_{i=1}^\infty (\diam E_i)^s: E\subset \bigcup_{i=1}^\infty E_i,\, \sup_i \diam E_i\leq\delta\right\}\quad\text{for all }E\subset X.\end{equation} The \emph{Hausdorff dimension} $\dim_H(E)$ of a set $E$ in $X$ is the unique number given by \begin{equation}\dim_H(E):=\inf\{\alpha\in[0,\infty):\Haus^\alpha(E)<\infty\}=\sup\{\beta\in[0,\infty):\Haus^\beta(E)>0\}.\end{equation} For background on the fine properties of Hausdorff measures, Hausdorff dimension, and related elements of geometric measure theory, see Mattila's \emph{Geometry of Sets and Measures in Euclidean Spaces} \cite{Mattila}.

We say that an IFS $\mathcal{F}$ over a metric space $X$ satisfies the \emph{open set condition (OSC)} if there exists an open set $U \subset X$ such that
\begin{equation}\label{eq:OSC}
\phi(U) \subset U\quad\text{and}\quad
\phi(U)\cap \psi(U) = \emptyset \quad \text{for every $\phi,\psi \in \mathcal{F}$ with $\phi \neq \psi$}.
\end{equation} If there exists an open set $U\subset X$ satisfying (\ref{eq:OSC}), and in addition, $K_{\mathcal{F}} \cap U \neq \emptyset$, then we say that $\mathcal{F}$ satisfies the \emph{strong open set condition (SOSC)}. We say that the attractor $K_\mathcal{F}$ of an IFS $\mathcal{F}$ over $X$ is \emph{self-similar} if each $\phi\in\mathcal{F}$ is a \emph{similarity}, i.e.~ there exists a constant $0\leq L_\phi<1$ such that \begin{equation}\dist(\phi(x),\phi(y))=L_\phi\dist(x,y)\quad\text{for all }x,y\in X.\end{equation}

\begin{thm}[{Schief \cite{Schief}, \cite{Schief2}}]\label{thm:Schief} Let $K_\mathcal{F}$ be a self-similar set in $X$; let $s=\sdim(\mathcal{F})$. If $X$ is a complete metric space, then \begin{equation}\Haus^s(K_\mathcal{F})>0\Rightarrow \mathrm{SOSC}\Rightarrow \dim_H(K_\mathcal{F})=s. \end{equation} If $X=\R^n$, then \begin{equation} \Haus^s(K_\mathcal{F})>0\Leftrightarrow \mathrm{SOSC}\Leftrightarrow \mathrm{OSC}\Rightarrow \dim_H(K_\mathcal{F})=s\end{equation} Moreover, the implications above are the best possible (unlisted arrows are false).
\end{thm}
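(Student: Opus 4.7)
The plan is to prove, in any complete metric space, the chain $\Haus^s(K_\mathcal{F})>0 \Rightarrow \mathrm{SOSC} \Rightarrow \dim_H(K_\mathcal{F})=s$, then to upgrade the first implication to a biconditional and add the OSC equivalence when $X=\R^n$, and finally to indicate counterexamples for the unlisted arrows. Throughout, the upper bound $\dim_H(K_\mathcal{F}) \leq s$ comes for free from Hutchinson's theorem, so every lower bound on $\dim_H$ closes an equality statement.

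For SOSC $\Rightarrow \dim_H(K_\mathcal{F}) = s$, I would form the Hutchinson self-similar measure $\mu$ on $K_\mathcal{F}$ using weights $p_\phi:=(\Lip\phi)^s$ (which sum to $1$ by \eqref{eq:sim}) and push the Bernoulli measure on $\mathcal{F}^\N$ through the address map; this yields a Borel probability measure satisfying $\mu=\sum_\phi p_\phi\,\phi_*\mu$ and $\mu(\phi_w(K_\mathcal{F}))=L_w^s$ for every finite word $w$, where $L_w$ is the product $\prod_i L_{\phi_{w_i}}$. Using the SOSC open set $U$, fix $z \in K_\mathcal{F} \cap U$ and $\rho>0$ with $B(z,\rho)\subset U$. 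Iterating the disjointness in \eqref{eq:OSC}, the images $\phi_w(B(z,\rho))$ are pairwise disjoint across the stopping-time set $\Sigma_r:=\{w : L_w \leq r/\rho < L_{w^-}\}$. Each $\phi_w(K_\mathcal{F})$ meeting $B(x,r)$ sits in a ball of radius comparable to $r$, and for each fixed $t<s$ the slack $s-t>0$ absorbs the metric-geometric overcounting of balls to give $\mu(B(x,r)) \leq C_t\, r^t$. The mass distribution principle then gives $\Haus^t(K_\mathcal{F})>0$ for every $t<s$, whence $\dim_H(K_\mathcal{F}) \geq s$.

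For $\Haus^s(K_\mathcal{F})>0 \Rightarrow \mathrm{SOSC}$, I would follow a density-point strategy on the coding space. Transferring $\Haus^s$ to a measure on $\mathcal{F}^\N$ via the address map, typical sequences $\omega$ exhibit positive lower density of $\Haus^s$ along the nested cylinders $\phi_{\omega|_n}(K_\mathcal{F})$. Because any substantial overlap would contribute $\Haus^s$-mass to several cylinders simultaneously and thus inflate $\Haus^s(K_\mathcal{F})$ beyond the self-similar budget, the overlap structure must be $\Haus^s$-negligible; this permits extraction of a finite word $w$ together with an open neighborhood of $\phi_w(K_\mathcal{F})$ that is essentially disjoint from its sibling cylinders. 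An inflation-and-iteration step then propagates this local separation into a globally invariant open set $U$ witnessing SOSC. This is the main obstacle and the heart of Schief \cite{Schief2}; my plan would essentially follow his construction.

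In the Euclidean case, the remaining arrow $\mathrm{OSC} \Rightarrow \Haus^s(K_\mathcal{F})>0$ is proved by Lebesgue-volume comparison: given an OSC open set $U$, the images $\phi_w(U)$, $w \in \Sigma_r$, are disjoint with total volume $\Haus^n(U)$, so the count of $w \in \Sigma_r$ with $\phi_w(U) \cap B(x,r) \neq \emptyset$ is uniformly bounded. This upgrades the argument of the second paragraph to a uniform bound $\mu(B(x,r)) \leq C\, r^s$, and mass distribution yields $\Haus^s(K_\mathcal{F})>0$ as in Schief \cite{Schief}; positivity of $\Haus^s$ then forces $K_\mathcal{F} \cap U \neq \emptyset$ after iteration, giving SOSC. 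For sharpness, one exhibits: a self-similar set in a complete metric space satisfying SOSC but with $\Haus^s=0$, showing SOSC $\not\Rightarrow \Haus^s>0$ (Schief \cite{Schief2}); and an overlapping self-similar set in $\R^n$ with $\dim_H(K_\mathcal{F})=s$ for which SOSC (equivalently OSC) fails, showing $\dim_H=s \not\Rightarrow$ SOSC. Together these refute every unlisted arrow in the diagram.
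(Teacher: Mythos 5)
The paper does not actually prove Theorem \ref{thm:Schief}; it is cited as a black box from Schief's two papers and used later (via Lemma \ref{lem:sdimvsdim} and Theorem \ref{thm:Stella}) without re-derivation. So there is no in-paper proof to compare against, and your proposal is in effect attempting to reconstruct Schief. For the Euclidean half and for $\Haus^s>0 \Rightarrow \mathrm{SOSC}$ you correctly identify the structure of Schief's arguments and explicitly defer the density-point construction to him, which is reasonable.

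The genuine gap is in your argument for $\mathrm{SOSC}\Rightarrow \dim_H(K_\mathcal{F})=s$ in a \emph{general} complete metric space. You build the Hutchinson measure $\mu$ with weights $L_\phi^s$, pack disjoint balls $\phi_w(B(z,\rho))$, $w\in\Sigma_r$, into $B(x,Cr)$, and assert that ``the slack $s-t>0$ absorbs the metric-geometric overcounting of balls to give $\mu(B(x,r))\leq C_t r^t$.'' That step is not justified. The count you need to control is
\[
n(x,r):=\card\{w\in\Sigma_r : K_w\cap B(x,r)\neq\emptyset\},
\]
and you would need $n(x,r)\lesssim r^{-(s-t)}$ to obtain the Frostman bound. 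In $\R^n$, in doubling spaces, or in Larman $\beta$-spaces, $n(x,r)$ is bounded independently of $r$ because only finitely many disjoint balls of radius $\sim\rho r$ fit in a ball of radius $\sim r$, and then one even gets the endpoint $t=s$. But the hypothesis here is only that $X$ is a complete metric space: no doubling, no $\beta$-space condition, and then there is no a priori polynomial (or even any uniform) bound on $n(x,r)$. Using the measure itself to bound the count only yields the tautology $\mu(B(x,r))\lesssim \mu(B(x,3r))$, which carries no information. This is exactly the place where the metric-space version of the theorem is harder than the Euclidean one, and where Schief's argument deviates from the naive mass-distribution scheme; it is the dividing line that makes $\mathrm{SOSC}\Rightarrow\Haus^s>0$ fail in general while $\mathrm{SOSC}\Rightarrow\dim_H=s$ still holds. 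As written, your proposal does not prove this implication, and the slack heuristic should either be replaced by Schief's actual mechanism (or a citation to it, as the paper does) or restricted to the doubling/$\beta$-space setting, where it does work.
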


Given a metric space $X$, a set $E\subset X$, and radius $\rho>0$, let $N(E,\rho)$ denote the maximal number of disjoint closed balls with center in $E$ and radius $\rho$. Following Larman \cite{Larman}, $X$ is called a \emph{$\beta$-space} if for all $0<\beta<1$ there exist constants $1\leq N_\beta<\infty$ and $r_\beta>0$ such that $N(B,\beta r)\leq N_\beta$ for every open ball $B$ of radius $0<r\leq r_\beta$.

\begin{thm}[{Stella \cite{Stella}}]\label{thm:Stella} Let $K_\mathcal{F}$ be a self-similar set in $X$; let $s=\sdim(\mathcal{F})$. If $X$ is a complete $\beta$-space, then \begin{equation} \mathrm{SOSC}\Rightarrow \Haus^s(K_\mathcal{F})>0.\end{equation}\end{thm}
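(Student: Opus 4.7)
The plan is to adapt the Hutchinson--Schief strategy for proving $\Haus^s(K_\mathcal{F}) > 0$ to the $\beta$-space setting. I will build the canonical self-similar probability measure $\mu$ on $K_\mathcal{F}$ and prove $\mu(B(x,r)) \leq C r^s$ on small scales; the mass distribution principle then yields $\Haus^s(K_\mathcal{F}) \geq 1/C > 0$. The role played by Euclidean volume in Schief's argument will be taken up by Larman's $\beta$-space packing axiom.

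First I set up the measure. Using the SOSC, fix an open set $U$ with $\phi(U) \subset U$ and pairwise disjoint images $\phi(U)$, choose $x_0 \in K_\mathcal{F} \cap U$, and pick $\rho_0 > 0$ with $B(x_0, \rho_0) \subset U$. After discarding the trivial similarities (whose terms $L_\phi^s = 0$ do not affect the equation $\sum_\phi L_\phi^s = 1$ and which contribute only a countable set to $K_\mathcal{F}$), set $L_{\min} = \min_\phi L_\phi > 0$. Let $\nu$ be the Bernoulli product measure on $\mathcal{F}^{\mathbb{N}}$ assigning weight $L_\phi^s$ to the letter $\phi$, and let $\mu = \pi_*\nu$ where $\pi \colon \mathcal{F}^{\mathbb{N}} \to K_\mathcal{F}$ is the coding map. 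For each finite word $w$, write $\phi_w$ for the composition and $r_w$ for the product of the ratios; then $\mu(\phi_w(K_\mathcal{F})) \leq r_w^s$.

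Next, for each small $r > 0$, consider the antichain $W_r$ of finite words $w$ such that $r_w \leq r$ but whose immediate prefix has product $> r$; note $L_{\min} r \leq r_w \leq r$ for $w \in W_r$. The core of the proof is the packing estimate: for all small $r$ and all $x \in K_\mathcal{F}$, the set $\{w \in W_r : \phi_w(K_\mathcal{F}) \cap B(x,r) \neq \emptyset\}$ has cardinality bounded by a constant depending only on $\mathcal{F}$ and the $\beta$-space parameters. The geometric input is that for each such $w$, the image $\phi_w(B(x_0,\rho_0))$ sits inside $\phi_w(U)$ and is an isometric copy (rescaled by $r_w$) of a ball of radius $\rho_0$; the sets $\phi_w(U)$ are pairwise disjoint for incomparable $w$ by an induction on the first branching index using the SOSC disjointness and the injectivity of non-trivial similarities; and all the centers $\phi_w(x_0)$ lie in $B(x,(1+\diam K_\mathcal{F})r)$ because $\diam \phi_w(K_\mathcal{F}) \leq r\diam K_\mathcal{F}$. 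Larman's axiom, applied at the scale $(1+\diam K_\mathcal{F})r$ with $\beta = L_{\min}\rho_0/(1+\diam K_\mathcal{F})$, then caps the count by $N_\beta$.

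Combining this bound with $\mu(\phi_w(K_\mathcal{F})) \leq r_w^s \leq r^s$ yields $\mu(B(x,r)) \leq N_\beta r^s$ on small scales, and the mass distribution principle completes the proof. The main obstacle I anticipate is the transition from the abstract disjointness of the sets $\phi_w(U)$ to a collection of genuine disjoint $X$-balls of comparable radius centered at the $\phi_w(x_0)$: in $\R^n$ similarities send balls to balls so this is automatic, but in a general $\beta$-space $X$ the set $\phi_w(B(x_0,\rho_0))$ is only the intersection of the $X$-ball $B(\phi_w(x_0),r_w\rho_0)$ with the image $\phi_w(X)$, so one must verify that true disjoint $X$-balls of radius proportional to $r_w\rho_0$ can nevertheless be extracted to activate Larman's axiom. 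This is the technical heart of the argument where the $\beta$-space hypothesis must be invoked carefully, and it is what distinguishes Stella's setting from Schief's.
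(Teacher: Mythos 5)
The paper does not supply a proof here; the theorem is quoted from Stella, so your attempt must stand on its own. Your scaffold --- the Bernoulli/self-similar measure $\mu = \pi_*\nu$, the antichain $W_r$, the pairwise disjointness of the sets $\phi_w(U)$ for incomparable $w$, a packing bound from Larman's axiom, and the mass distribution principle --- is the standard Schief strategy adapted to the $\beta$-space setting, and it is the right one to try.

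The proof as written, however, has a genuine hole at precisely the step that carries the content of the theorem, and you flag it yourself. Larman's axiom counts pairwise disjoint closed $X$-balls with centers in a given ball, and the disjointness of the sets $\phi_w(U)$ does not by itself convert into such a family. Your own remark explains why: since $\phi_w$ is in general not surjective, $\phi_w(B(x_0,\rho_0))$ equals $B(\phi_w(x_0),r_w\rho_0)\cap\phi_w(X)$, a slice of an $X$-ball rather than a ball; in particular, if $\phi_w(X)$ is a thin subset of $X$, then $\phi_w(x_0)$ may lie on the $X$-boundary of $\phi_w(U)$ even though it is ``deep inside'' with respect to the intrinsic metric of $\phi_w(X)$. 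Nothing in your argument rules out that $\phi_w(x_0)$ and $\phi_{w'}(x_0)$ are much closer than $r$ for two incomparable $w,w'\in W_r$: the sheets $\phi_w(X)$ and $\phi_{w'}(X)$ can pass near each other without the disjoint $U$-images meeting. Consequently the closed balls $\overline{B}(\phi_w(x_0),cr)$ need not be pairwise disjoint for any fixed $c>0$ depending only on $\mathcal{F}$, and the appeal to the $\beta$-space axiom at that step is unjustified as written. Until you supply an argument that converts the disjoint slices into a genuine ball packing (or otherwise bounds the multiplicity), the cardinality estimate, and hence $\mu(B(x,r))\lesssim r^s$, remains an assertion rather than a proof --- and this is exactly the content that separates the $\beta$-space theorem from Schief's Euclidean version. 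A smaller issue: $\mu(\phi_w(K_\mathcal{F}))\leq r_w^s$ does not follow from the Bernoulli construction alone (that gives $\geq$, since $\pi$ need not be injective); one must use the OSC to kill the overlap mass. Finally, note that in the only place the paper invokes Stella's theorem (Lemma 5.3), the space is $(\R^N,d)$ with $d$ a snowflaked product metric and the generators are affine bijections of $\R^N$, which do send $d$-balls to $d$-balls, so the obstacle you identify never arises there; the difficulty is specific to the general metric statement you set out to prove.
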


The following pair of lemmas are easy exercises, whose proofs we leave for the reader.

\begin{lem}\label{lem:sdimvsdim} Let $K_\mathcal{F}$ be a self-similar set in $X$; let $s=\sdim(\mathcal{F})$. If $\Haus^s(K_\mathcal{F})>0$, then $K_\mathcal{F}$ is Ahlfors $s$-regular, i.e.~there exists a constant $1\leq C<\infty$ such that \begin{equation}
C^{-1}r^{s} \leq \mathcal{H}^s(K_{\mathcal{F}}\cap B(x,r)) \leq C r^s\quad\text{for all }x\in K_\mathcal{F}\text{ and }0<r\leq \diam K_\mathcal{F}.\end{equation}
\end{lem}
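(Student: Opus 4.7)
The plan is to use the strong open set condition (SOSC), which follows from $\Haus^s(K_{\mathcal{F}})>0$ by Theorem~\ref{thm:Schief}. Fix an SOSC open set $U$, a point $y_{0}\in U\cap K_{\mathcal{F}}$, and $\rho_{0}>0$ with $B(y_{0},\rho_{0})\subset U$. For a word $w=\phi_{i_{1}}\cdots\phi_{i_{n}}$ write $\phi_{w}$ for the composition, a similarity of ratio $L_{w}=\prod_{k}L_{\phi_{i_{k}}}$, and let $L_{\min}=\min_{\phi\in\mathcal{F}}L_{\phi}>0$ (the case $L_{\min}=0$ is trivial). For $0<r\leq\diam K_{\mathcal{F}}$, define the frontier antichain
\[
V(r)=\{w:L_{w}\diam K_{\mathcal{F}}\leq r<L_{w^{-}}\diam K_{\mathcal{F}}\},
\]
where $w^{-}$ denotes $w$ with its last letter removed. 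Iterating \eqref{eq:sim} gives $\sum_{w\in V(r)}L_{w}^{s}=1$, so the cylinders $\{\phi_{w}(K_{\mathcal{F}})\}_{w\in V(r)}$ cover $K_{\mathcal{F}}$ by sets of diameter at most $r$ with pairwise intersections of $\Haus^{s}$-measure zero; a routine antichain argument using OSC further shows that $\{\phi_{w}(U)\}_{w\in V(r)}$ is a disjoint collection, and every $w\in V(r)$ satisfies $L_{w}\geq L_{\min}r/\diam K_{\mathcal{F}}$.

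For the lower bound, given $x\in K_{\mathcal{F}}$, I pick any address $(w_{1},w_{2},\ldots)$ with $x\in\bigcap_{n}\phi_{w_{1}\cdots w_{n}}(K_{\mathcal{F}})$ and truncate at its unique initial segment $w_{\ast}$ lying in $V(r)$. Since $\phi_{w_{\ast}}(K_{\mathcal{F}})\subset B(x,r)$ (diameter $\leq r$, contains $x$) and $L_{w_{\ast}}\geq L_{\min}r/\diam K_{\mathcal{F}}$,
\[
\Haus^{s}(K_{\mathcal{F}}\cap B(x,r))\geq L_{w_{\ast}}^{s}\Haus^{s}(K_{\mathcal{F}})\geq\frac{L_{\min}^{s}\Haus^{s}(K_{\mathcal{F}})}{(\diam K_{\mathcal{F}})^{s}}\,r^{s}.
\]

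For the upper bound, set $W_{x}(r)=\{w\in V(r):\phi_{w}(K_{\mathcal{F}})\cap B(x,r)\neq\emptyset\}$. Using essential disjointness of the cover,
\[
\Haus^{s}(K_{\mathcal{F}}\cap B(x,r))\leq\sum_{w\in W_{x}(r)}L_{w}^{s}\Haus^{s}(K_{\mathcal{F}})\leq|W_{x}(r)|\,(r/\diam K_{\mathcal{F}})^{s}\Haus^{s}(K_{\mathcal{F}}),
\]
so it suffices to establish a uniform multiplicity bound $|W_{x}(r)|\leq M$ with $M$ depending only on $\mathcal{F}$ and $U$. My approach is to exploit SOSC: for each $w\in W_{x}(r)$ the ball $B(\phi_{w}(y_{0}),L_{w}\rho_{0})\subset\phi_{w}(U)$ is centered at a point of $K_{\mathcal{F}}$ in $\overline{B}(x,2r)$ and has radius $\geq L_{\min}\rho_{0}r/\diam K_{\mathcal{F}}$, and these balls are pairwise disjoint across $w\in W_{x}(r)$.

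The main obstacle is converting this disjointness into the multiplicity bound in a general complete metric space. In $\R^{n}$ a volume-packing argument in $B(x,3r)$ immediately gives $|W_{x}(r)|\leq M$ depending only on $n$ and $\rho_{0}/\diam K_{\mathcal{F}}$. For general $X$, naively comparing the $\Haus^{s}$-masses of the disjoint balls to $\Haus^{s}(K_{\mathcal{F}}\cap B(x,3r))$ via the lower bound yields only a circular self-improvement of the form $\Haus^{s}(K_{\mathcal{F}}\cap B(x,r))\lesssim \Haus^{s}(K_{\mathcal{F}}\cap B(x,3r))$, which does not close to an $r^{s}$ estimate by itself. I plan to break the circularity following Schief \cite{Schief2}: the points $\phi_{w}(y_{0})$ are pairwise separated by distances $\geq L_{\min}\rho_{0}r/\diam K_{\mathcal{F}}$ (each lies in its own $\phi_{w}(U)$ at distance $\geq L_{w}\rho_{0}$ from every $\phi_{v}(U)$ with $v\neq w$), and compactness of $K_{\mathcal{F}}$ together with self-similar rescaling then extracts a uniform packing bound depending only on $\mathcal{F}$ and $U$, closing the argument.
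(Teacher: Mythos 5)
Your lower bound argument is correct and essentially the standard one: locate a frontier cylinder $\phi_{w_*}(K_\mathcal{F})$ of diameter at most $r$ containing $x$ (possible since $r\leq\diam K_\mathcal{F}$), use that its $\Haus^s$-measure equals $L_{w_*}^s\Haus^s(K_\mathcal{F})$, and bound $L_{w_*}$ from below via the antichain condition. This part of the proposal is fine, and it is worth noting that it does not even require SOSC.

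The upper bound is where the real content lies, and you correctly reduce it to a uniform multiplicity bound $|W_x(r)|\leq M$; you are also right that the naive mass comparison only yields the circular inequality $\Haus^s(K_\mathcal{F}\cap B(x,r))\lesssim\Haus^s(K_\mathcal{F}\cap B(x,3r))$. However, the proposed repair --- ``compactness of $K_\mathcal{F}$ together with self-similar rescaling'' --- does not close the gap as sketched, and I would call this a genuine hole rather than a routine detail. Compactness (total boundedness) gives $|V(r)|<\infty$ and hence a bound on $|W_x(r)|$ for each \emph{fixed} $r>0$, but nothing uniform as $r\downarrow 0$. The self-similar rescaling idea also fails in a subtle way: to ``zoom out'' from scale $r$ you would pull back by $\phi_j^{-1}$ for some first letter $j$, but this maps only the cylinders $w\in W_x(r)$ whose first letter is $j$ back into $K_\mathcal{F}$ (the contractions $\phi_j$ need not be surjective on $X$, and the other cylinders are not in $\phi_j(X)$), and choosing the best $j$ by pigeonhole still loses a factor of up to $\card\mathcal{F}$ per zoom step. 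Iterating this recursion from scale $r$ up to scale $\diam K_\mathcal{F}$ takes $\sim\log(1/r)$ steps and therefore produces only a bound of the form $|W_x(r)|\lesssim r^{-\beta}$ for some $\beta>0$, not a constant. An estimate of this quality gives $\Haus^s(K_\mathcal{F}\cap B(x,r))\lesssim r^{s-\beta}$, which is not Ahlfors upper regularity. In other words, the passage from lower regularity plus finite measure to a uniform packing bound is exactly the nontrivial step, and pointing at Schief without isolating which of his lemmas furnishes $|W_x(r)|\leq M$ in a general complete metric space leaves the proof incomplete. A workable route would instead invoke the $\Haus^s$ upper density theorem (to find $x_0\in K_\mathcal{F}\cap U$ with $\Haus^s(K_\mathcal{F}\cap B(x_0,\rho))\leq C_0\rho^s$ for small $\rho$) together with the SOSC measure-zero boundary fact $\Haus^s\bigl(K_\mathcal{F}\cap\phi_w(U)\setminus\phi_w(K_\mathcal{F}\cap U)\bigr)=0$, so that the disjoint balls $B(\phi_w(x_0),L_w\rho_0)$ have $\Haus^s$-mass both bounded above and below by $\simeq L_w^s$; these pieces then need to be assembled with some care, or else the multiplicity bound should simply be quoted from the literature.

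A small additional remark: your definition takes $V(r)=\{w:L_w\diam K_\mathcal{F}\leq r< L_{w^-}\diam K_\mathcal{F}\}$, so the cylinders have diameter $\leq r$; when you say $\phi_{w_\ast}(K_\mathcal{F})\subset B(x,r)$ you should either work with closed balls or take the antichain slightly finer, since a set of diameter exactly $r$ containing $x$ need not sit inside the open ball $B(x,r)$. This is cosmetic. The multiplicity bound is the substantive gap.
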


\begin{lem}\label{lem:connIFS}
Let $\mathcal{F}$ be an IFS over a complete metric space. If $K_{\mathcal{F}}$ is connected, $\diam K_\mathcal{F}>0$, and $\phi \in \mathcal{F}$ has $\Lip(\phi) = 0$, then $K_{\mathcal{F}}$ agrees with the attractor of  $\mathcal{F}\setminus\{\phi\}$.
\end{lem}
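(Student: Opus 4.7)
The plan is to invoke the uniqueness clause in Hutchinson's theorem. Set $\mathcal{F}' := \mathcal{F}\setminus\{\phi\}$; to conclude $K_\mathcal{F}=K_{\mathcal{F}'}$ it suffices to verify that $K_\mathcal{F}$ is a fixed point of the Hutchinson operator for $\mathcal{F}'$, i.e.\ that $K_\mathcal{F}=\bigcup_{\psi\in\mathcal{F}'}\psi(K_\mathcal{F})$. The hypothesis $\Lip(\phi)=0$ means exactly that $\phi$ is a constant map, say $\phi\equiv p$ for some $p\in X$, so the fixed point identity for $\mathcal{F}$ rewrites as
\[
K_\mathcal{F}=\phi(K_\mathcal{F})\cup\bigcup_{\psi\in\mathcal{F}'}\psi(K_\mathcal{F})=\{p\}\cup\bigcup_{\psi\in\mathcal{F}'}\psi(K_\mathcal{F}).
\]
Thus the entire task collapses to showing that the point $p$ is already contained in $A:=\bigcup_{\psi\in\mathcal{F}'}\psi(K_\mathcal{F})$.

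For this I would argue by contradiction using connectedness. First I would note that $\mathcal{F}'$ is nonempty: otherwise $\mathcal{F}=\{\phi\}$ and $K_\mathcal{F}=\{p\}$, contradicting $\diam K_\mathcal{F}>0$. Hence $A$ is a finite union of continuous images of the compact set $K_\mathcal{F}$, so $A$ is itself nonempty and compact. If $p\notin A$, then $\{p\}$ and $A$ are two disjoint, nonempty, compact (hence closed) subsets of $K_\mathcal{F}$ whose union is all of $K_\mathcal{F}$; this is a separation and contradicts connectedness of $K_\mathcal{F}$. Therefore $p\in A$ and $K_\mathcal{F}=A$, which is what I needed.

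The lemma is really a topological observation wrapped in IFS language, so I do not anticipate a significant obstacle; the only step that requires any care is the separation argument, and the only caveat to check is that the conclusion does not secretly depend on hypotheses about the remaining maps in $\mathcal{F}'$ (for example, that some of them be non-constant). Hutchinson uniqueness applies to any nonempty finite family of contractions, so no such extra hypothesis is needed, and the lemma can in fact be iterated to strip off all constant maps so long as the attractor remains connected with positive diameter.
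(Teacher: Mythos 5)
Your argument is correct, and since the paper explicitly leaves this lemma as an exercise, there is no official proof to compare against. The strategy is sound: because $\Lip(\phi)=0$, the map $\phi$ is constant with value $p$, so the Hutchinson fixed-point identity for $\mathcal{F}$ reads $K_\mathcal{F}=\{p\}\cup A$ with $A:=\bigcup_{\psi\in\mathcal{F}'}\psi(K_\mathcal{F})$, and the whole lemma reduces to $p\in A$. You correctly observe that $\mathcal{F}'\neq\emptyset$ (else $K_\mathcal{F}=\{p\}$ has diameter $0$), that $A$ is a nonempty finite union of continuous images of the compact $K_\mathcal{F}$ and hence nonempty compact, and that $p\notin A$ would split $K_\mathcal{F}$ into two disjoint nonempty closed (hence clopen) pieces $\{p\}$ and $A$, contradicting connectedness. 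Once $p\in A$, $K_\mathcal{F}=A$ is a nonempty compact fixed point of the Hutchinson operator for $\mathcal{F}'$, and uniqueness in Hutchinson's theorem gives $K_\mathcal{F}=K_{\mathcal{F}'}$. Your closing remark that the lemma can be iterated to remove all constant maps is exactly how the paper uses it (in the reduction at the start of \S3, to ensure $L_1>0$). No gaps.
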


\subsection{H\"older parameterizations}\label{sec:Hparamcriterion}

Let $s\geq 1$, let $X$ be a metric space, and let $f:[0,1]\to X$. We define the \emph{$s$-variation} of $f$ (\emph{over} $[0,1]$) by
\begin{equation}\label{eq:mass}
\|f\|_{s\text{-var}} := \left(\sup_{\mathcal{P}} \sum_{I \in \mathcal{P}} (\diam{f(I)})^s \right)^{1/s} \in [0,+\infty],
\end{equation}
where the supremum ranges over all finite interval partitions $\mathcal{P}$ of $[0,1]$. Here and below a \emph{finite interval partition} of an interval $I$ is a collection of (possibly degenerate) intervals $\{J_1,\dots,J_k\}$ that are mutually disjoint with $I=\bigcup_{i=1}^kJ_i$. We say that the map $f$ is \emph{$(1/s)$-H\"older continuous} provided that the associated \emph{$(1/s)$-H\"older constant} \begin{equation}\Hold_{1/s}(f):= \sup_{x\neq y} \frac{\dist(f(x),f(y))}{|x-y|^{1/s}}<\infty.\end{equation} By now, the following connection between continuous maps of finite $s$-variation and $(1/s)$-H\"older continuous maps is a classic exercise; for a proof and some historical remarks, see Friz and Victoir's \emph{Multidimensional Stochastic Processes as Rough Paths: Theory and Applications} \cite[Chapter 5]{FV-rough}. Although, we do not invoke Lemma \ref{prop:mass} directly below, behind the scenes many estimates that we carry out are motivated by trying to bound a discrete $s$-variation adapted to finite trees that we used in \cite[\S4]{BNV}.

\begin{lem}[{\cite[Proposition 5.15]{FV-rough}}] \label{prop:mass}
Let $s\geq1$ and let $f:[0,1]\to X$ be continuous.
\begin{enumerate}
\item If $f$ is $(1/s)$-H\"older, then $\|f\|_{s\text{-var}} \leq \Hold_{1/s}{f}$.
\item If $\|f\|_{s\text{-var}} < \infty$, then there exists a continuous surjection $\psi:[0,1] \to [0,1]$ and a $(1/s)$-H\"older map $F:[0,1]\to X$ such that $f = F\circ\psi$ and $\Hold_{1/s}{F} \leq \|f\|_{s\text{-var}}$.
\end{enumerate}
\end{lem}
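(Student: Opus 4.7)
\noindent\emph{Part (1)} is a one-line bound: for any interval $I \subseteq [0,1]$, $\diam f(I) \leq \Hold_{1/s}(f)\,|I|^{1/s}$, and for any finite interval partition $\mathcal{P}$ of $[0,1]$, summing the $s$-th powers and using $\sum_I |I| = 1$ gives $\sum_I (\diam f(I))^s \leq \Hold_{1/s}(f)^s$; taking $s$-th roots and the supremum yields $\|f\|_{s\text{-var}} \leq \Hold_{1/s}(f)$.

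\noindent\emph{Part (2)} is the classical reparameterization-by-cumulative-variation. Set $V := \|f\|_{s\text{-var}}^s$; the case $V = 0$ forces $f$ constant, in which case take $F=f$ and $\psi=\mathrm{id}$, so assume $V > 0$ and define
\begin{equation*}
v(t) := \|f|_{[0,t]}\|_{s\text{-var}}^s \quad\text{and}\quad \psi(t) := v(t)/V.
\end{equation*}
Concatenation of partitions of $[0,a]$ and $[a,b]$ immediately gives the superadditivity estimate
\begin{equation*}
v(b) - v(a) \;\geq\; \|f|_{[a,b]}\|_{s\text{-var}}^s \;\geq\; \dist(f(a), f(b))^s\qquad (0 \leq a \leq b \leq 1),
\end{equation*}
which is the only inequality the H\"older bound will need. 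Granted that $v$ is continuous, $\psi$ is a continuous surjection $[0,1] \to [0,1]$ by the intermediate value theorem, and the prescription $F(\psi(t)) := f(t)$ well-defines a map $F: [0,1] \to X$ with $f = F \circ \psi$: the implication $\psi(t_1) = \psi(t_2) \Rightarrow f(t_1) = f(t_2)$ follows at once from the superadditivity estimate. The H\"older bound is then immediate: for $t_1 \leq t_2$ with $u_i := \psi(t_i)$,
\begin{equation*}
\dist(F(u_1), F(u_2))^s = \dist(f(t_1), f(t_2))^s \;\leq\; v(t_2) - v(t_1) \;=\; V(u_2 - u_1),
\end{equation*}
so $\Hold_{1/s}(F) \leq V^{1/s} = \|f\|_{s\text{-var}}$.

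\noindent\emph{Main obstacle.} Everything hinges on continuity of $v$, which is where continuity of $f$ (equivalently, by compactness, uniform continuity) enters. For $s = 1$ this is the classical fact that the total variation of a continuous map is continuous. For $s > 1$ it is more delicate, because $(a+b)^s \geq a^s + b^s$ means that \emph{splitting} an interval of a partition can \emph{decrease} $\sum (\diam f(I))^s$, so nearly optimal partitions cannot be freely refined to small mesh. The standard resolution (cf.~\cite[Ch.~5]{FV-rough}) couples the superadditivity estimate with uniform continuity of $f$, using the elementary inequality $(a+b)^s - a^s - b^s \leq s\,2^{s-1}\max(a,b)^{s-1}\min(a,b)$ to control the change in $\sum (\diam f(I))^s$ when the test points $t_0$ and $t_0 + \delta$ are inserted into a near-optimal partition, which forces $v(t_0 + \delta) - v(t_0) \to 0$ as $\delta \to 0$ (and symmetrically from the left). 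I expect this continuity step to be the main technical point; the rest of the argument then follows the bookkeeping described above.
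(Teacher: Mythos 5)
The paper does not prove this lemma; it cites it as \cite[Proposition~5.15]{FV-rough} and refers the reader to Friz--Victoir for the argument, so there is no ``paper's own proof'' to compare against. That said, your proposal follows the standard cumulative-variation route, which is what Friz--Victoir do, so the approach is the expected one. Part (1) is fine as written. In Part (2), the superadditivity estimate, the well-definedness of $F$, and the deduction $\Hold_{1/s}(F)\leq \|f\|_{s\text{-var}}$ are all correct and are exactly the bookkeeping the argument needs. (One small remark: the paper's $\|\cdot\|_{s\text{-var}}$ uses $\diam f(I)$ rather than endpoint increments, but for continuous $f$ these two suprema coincide, so no adjustment is needed.)

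The one point worth flagging is the sketch of continuity of $v$. Your description---inserting $t_0$ and $t_0+\delta$ into a near-optimal partition and using $(a+b)^s-a^s-b^s\lesssim \max(a,b)^{s-1}\min(a,b)$---does control the cost of the insertion, but after splitting the modified partition into pieces on $[0,t_0]$, $[t_0,t_0+\delta]$ and the rest, the bound you get is of the form $v(t_0+\delta)-v(t_0)\lesssim \omega(\delta)+\|f|_{[t_0,t_0+\delta]}\|_{s\text{-var}}^s$, which is essentially circular: one still has to show separately that the $s$-variation over the short interval $[t_0,t_0+\delta]$ vanishes as $\delta\to 0$. That step uses a different device. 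Fix $\delta_0>0$ and a near-optimal partition $\mathcal{P}_0$ of $[t_0,t_0+\delta_0]$ with $\sum_{\mathcal{P}_0}>g(\delta_0)-\epsilon$ (where $g(\delta):=\|f|_{[t_0,t_0+\delta]}\|_{s\text{-var}}^s$), let $a_1>t_0$ be its first division point, and observe that for any $0<\delta<a_1-t_0$ and any partition $\mathcal{P}'$ of $[t_0,t_0+\delta]$, the spliced partition $\mathcal{P}'\cup\{[t_0+\delta,a_1]\}\cup(\mathcal{P}_0\setminus\{[t_0,a_1]\})$ of $[t_0,t_0+\delta_0]$ has sum $\leq g(\delta_0)$; comparing with $\sum_{\mathcal{P}_0}$ gives $\sum_{\mathcal{P}'}\leq \epsilon + (\diam f([t_0,a_1]))^s-(\diam f([t_0+\delta,a_1]))^s\leq \epsilon + s\,D^{s-1}\omega(\delta)$, hence $g(\delta)\leq \epsilon + s\,D^{s-1}\omega(\delta)$ and $g(\delta)\to 0$. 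Since you explicitly defer to Friz--Victoir for this step and correctly identify it as the technical crux, this is a refinement rather than a fatal gap, but the mechanism is not the one your sketch suggests.
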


The standard method to build a H\"older parameterization of a curve in a Banach space that we employ below is to exhibit the curve as the pointwise limit of a sequence of Lipschitz curves with controlled growth of Lipschitz constants. We will use this principle frequently, and also on one occasion in \S\ref{sec:Holdcon}, the following extension where the intermediate maps are H\"older continuous.

\begin{lem}\label{l:LipHold} Let $1\leq t <s$, $M>0$, $0<\xi_1\leq \xi_2<1$, $\alpha>0$, $\beta>0$, and $j_0\in\Z$. Let $(X,|\cdot|)$ be a Banach space. Suppose that $\rho_j$ $(j\geq j_0)$ is a sequence of scales and  $f_j:[0,M]\rightarrow X$ $(j\geq j_0)$ is a sequence of $(1/t)$-H\"older maps satisfying \begin{enumerate}
\item $\rho_{j_0}=1$ and $\xi_1 \rho_j \leq \rho_{j+1}\leq \xi_2 \rho_j$ for all $j\geq j_0$,
\item $|f_j(x)-f_j(y)| \leq A_j|x-y|^{1/t}$ for all $j\geq j_0$, where $A_j \leq \alpha \rho_j^{1-s/t}$, and
\item $|f_j(x)-f_{j+1}(x)| \leq B_j$ for all $j\geq j_0$, where $B_j \leq \beta \rho_j$.\end{enumerate} Then
$f_j$ converges uniformly to a map $f:[0,M]\rightarrow X$ such that $$|f(x)-f(y)|\leq H |x-y|^{1/s}\quad\text{for all $x,y\in[0,M]$},$$ where $H$ is a finite constant depending on at most $\max(M,M^{-1})$, $\xi_1$, $\xi_2$, $\alpha$, and $\beta$. In particular, we may take \begin{equation}\label{e:H} H=\frac{\alpha}{\xi_1}\max(1,M)+\frac{2\beta}{\xi_1(1-\xi_2)}\max(1,M^{-1}).\end{equation} \end{lem}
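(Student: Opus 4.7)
The plan is to run a standard telescoping and triangulation argument: first I would show that $(f_j)$ converges uniformly using the closeness condition (3), and then I would bound the H\"older constant of the limit by passing through an intermediate approximation $f_j$ whose scale $\rho_j$ is chosen to match $|x-y|^{1/s}$.

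\textbf{Uniform convergence.} Iterating the upper bound in (1) gives $\rho_i \leq \xi_2^{i-j}\rho_j$ for $i \geq j \geq j_0$, and combining with (3) in a telescoping sum yields
\begin{equation*}
|f_j(x) - f_k(x)| \leq \sum_{i=j}^{k-1} B_i \leq \beta \sum_{i \geq j} \rho_i \leq \frac{\beta \rho_j}{1 - \xi_2}
\end{equation*}
for every $x \in [0, M]$ and $k > j \geq j_0$. Since $\rho_j \to 0$, the sequence $(f_j)$ is uniformly Cauchy, hence converges uniformly to a continuous $f : [0, M] \to X$ with $|f(x) - f_j(x)| \leq \beta \rho_j/(1-\xi_2)$ for all $j \geq j_0$ and $x \in [0,M]$.

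\textbf{H\"older estimate.} Fix $x \neq y$ in $[0, M]$. I would select $j \geq j_0$ so that $\rho_j$ is comparable to $|x - y|^{1/s}$: when $|x - y|^{1/s} \leq 1$, take $j$ to be the largest index with $\rho_j \geq |x - y|^{1/s}$, so that (1) forces $|x - y|^{1/s} \leq \rho_j \leq |x - y|^{1/s}/\xi_1$; when $|x - y|^{1/s} > 1$, take $j = j_0$ with $\rho_j = 1$. The triangle inequality through $f_j$ together with (2) gives
\begin{equation*}
|f(x) - f(y)| \leq \frac{2\beta \rho_j}{1 - \xi_2} + \alpha \rho_j^{1 - s/t} |x - y|^{1/t}.
\end{equation*}
In the first case, $1 - s/t < 0$ together with $\rho_j \geq |x-y|^{1/s}$ gives $\rho_j^{1-s/t} \leq |x-y|^{1/s - 1/t}$, so the second term is at most $\alpha|x-y|^{1/s}$ and the first is at most $2\beta|x-y|^{1/s}/(\xi_1(1-\xi_2))$. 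In the second case, $\rho_j = 1 \leq |x-y|^{1/s}$ handles the first term, and since $s > t \geq 1$ forces $1/t - 1/s \in (0, 1)$, the estimate $|x-y|^{1/t} \leq M^{1/t - 1/s}|x-y|^{1/s} \leq \max(1, M)|x-y|^{1/s}$ handles the second. Collecting constants and absorbing the resulting $\max(1, M^{\pm 1})$ factors yields $H$ in the form of \eqref{e:H}.

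\textbf{Main obstacle.} The proof will be largely mechanical. The only creative choice is the balancing scale $\rho_j \approx |x-y|^{1/s}$, which is precisely what converts the growing $(1/t)$-H\"older constants $A_j \lesssim \rho_j^{1-s/t}$ into a uniform $(1/s)$-H\"older bound on $f$; the case split $|x-y|^{1/s} \gtrless 1$ is what produces the asymmetric $M^{\pm 1}$ factors in the final constant.
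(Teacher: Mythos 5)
Your proof is correct, and it is the standard telescoping-plus-scale-balancing argument that the paper outsources to the cited reference \cite[Lemma B.1]{BNV} (the paper itself gives no details, only the citation with the remark that the $t>1$ case is \emph{mutatis mutandis}). Your choice of $j$ with $\rho_j \approx |x-y|^{1/s}$, the case split at $|x-y|^{1/s}=1$, and the resulting constant all check out, and in fact your constants are slightly sharper than the stated $H$ (e.g.\ in Case 1 you get $\alpha + \tfrac{2\beta}{\xi_1(1-\xi_2)}$ versus $\tfrac{\alpha}{\xi_1} + \tfrac{2\beta}{\xi_1(1-\xi_2)}$), so they are dominated by \eqref{e:H} as needed.
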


\begin{proof} The statement and proof in the case $t=1$ is written in full detail in \cite[Lemma B.1]{BNV}. The proof of the general case follows \emph{mutatis mutandis}.\end{proof}

\begin{cor}\label{c:LipHold} Let $f_{j_0},\dots,f_{j_1}$ be a finite sequence of functions and $\rho_{j_0},\dots,\rho_{j_1}$ be a finite sequence of scales satisfying the hypothesis of Lemma \ref{l:LipHold}, i.e.~assume that (1) and (3) hold for all $j_0\leq j\leq j_1-1$ and (2) holds for all $j_0\leq j\leq j_1$. Then the function $f_{j_1}$ is $(1/s)$-H\"older continuous with $\Hold_{1/s} f_{j_1}\leq H$, where $H$ is given by \eqref{e:H}.\end{cor}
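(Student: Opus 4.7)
The plan is to reduce Corollary \ref{c:LipHold} to Lemma \ref{l:LipHold} by extending the finite data $(f_j, \rho_j)_{j_0 \leq j \leq j_1}$ to an infinite sequence in a trivial way and then applying the lemma. Since the conclusion of the lemma is a H\"older estimate on the uniform limit, the strategy is to arrange the extension so that the limit is $f_{j_1}$ itself.

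Concretely, for $j > j_1$ I would define $f_j := f_{j_1}$ and $\rho_j := \xi_2^{\,j-j_1}\rho_{j_1}$ (any choice in $[\xi_1\rho_{j-1},\xi_2\rho_{j-1}]$ would do). I would then verify each hypothesis of Lemma \ref{l:LipHold} on the extended data:
\begin{itemize}
\item Condition (1) of the lemma holds for $j_0\leq j\leq j_1-1$ by assumption, and it holds for $j\geq j_1$ directly from the definition of $\rho_j$; the normalization $\rho_{j_0}=1$ is inherited.
\item Condition (3) holds for $j_0\leq j\leq j_1-1$ by assumption, and for $j\geq j_1$ the difference $f_j-f_{j+1}$ is identically zero, so the bound holds trivially with $B_j=0$.
\item For condition (2) and $j_0\leq j\leq j_1$ the bound is assumed. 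For $j>j_1$ one has $|f_j(x)-f_j(y)|=|f_{j_1}(x)-f_{j_1}(y)|\leq A_{j_1}|x-y|^{1/t}$, and I must check that one may take $A_j=A_{j_1}\leq \alpha\rho_j^{1-s/t}$. Here the key observation is that $s>t\geq 1$ forces the exponent $1-s/t$ to be \emph{negative}, while $\rho_j\leq \rho_{j_1}$ for $j\geq j_1$; hence $\rho_j^{\,1-s/t}\geq \rho_{j_1}^{\,1-s/t}$, and so $A_{j_1}\leq \alpha\rho_{j_1}^{\,1-s/t}\leq \alpha\rho_j^{\,1-s/t}$, as required.
\end{itemize}

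With all three hypotheses in force on the infinite extension, Lemma \ref{l:LipHold} produces a uniform limit $f:[0,M]\to X$ that is $(1/s)$-H\"older with $\Hold_{1/s}f\leq H$, where $H$ is given by the same expression \eqref{e:H} (the constants $\xi_1,\xi_2,\alpha,\beta,M$ being unchanged by the extension). Since $f_j\equiv f_{j_1}$ for all $j\geq j_1$, the uniform limit must coincide with $f_{j_1}$. Therefore $\Hold_{1/s} f_{j_1}\leq H$, which is exactly the claim.

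I do not anticipate any real obstacle here: the only point requiring care is the monotonicity check in the verification of condition (2) for $j>j_1$, which crucially uses the sign of $1-s/t$ arising from the assumption $s>t$ in Lemma \ref{l:LipHold}. Everything else is bookkeeping, and no new estimates beyond those already packaged inside Lemma \ref{l:LipHold} are needed.
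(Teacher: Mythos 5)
Your proposal is correct and follows essentially the same approach as the paper: extend $(f_j,\rho_j)$ trivially beyond $j_1$ by setting $f_j\equiv f_{j_1}$, $B_j\equiv 0$, $A_j\equiv A_{j_1}$, and apply Lemma~\ref{l:LipHold} to the infinite sequence, whose uniform limit is $f_{j_1}$ itself. The one point you spell out that the paper leaves implicit is the verification that $A_{j_1}\leq\alpha\rho_j^{1-s/t}$ for $j>j_1$, which indeed relies on the sign of the exponent $1-s/t<0$ (coming from $t<s$) together with the monotone decrease of $\rho_j$; this is a genuine detail and worth noting.
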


\begin{proof} Extend the sequence of functions $f_{j_0},\dots,f_{j_1}$ to an infinite sequence by setting $f_j\equiv f_{j_1}$ for all $j>j_1$. Also choose any extension of the sequence of scales $\rho_{j_0},\dots,\rho_{j_1}$ satisfying (1). Then the full sequence $(f_j,\rho_j)_{j_0}^\infty$ satisfies the hypothesis of the lemma with $A_j\equiv A_{j_1}$ and $B_j\equiv 0$ for all $j>j_1$. Therefore, $f_{j_1}\equiv \lim_{j\rightarrow\infty} f_j$ is $(1/s)$-H\"older with $\Hold_{1/s} f_{j_1}\leq H$. \end{proof}

\subsection{Words}\label{sec:words}
Suppose we are given an IFS $\mathcal{F}=\{\phi_1,\dots,\phi_k\}$ over a complete metric space $X$ such that $\Lip \phi_i>0$ for all $1\leq i\leq k$. Set $s:=\sdim(\mathcal{F})$, and for each $i\in\{1,\dots,k\}$, set $L_i := \Lip(\phi_i)$. Relabeling, we may assume without loss of generality that \begin{equation}\label{L-increasing} 0< L_1 \leq \cdots \leq L_k < 1.\end{equation} By definition of the similarity dimension, we have $L_1^s+\dots+L_k^s=1$.

Define the \emph{alphabet} $A=\{1,\dots,k\}$. Let $A^n=\{i_1\cdots i_n:i_1,\dots,i_n\in A\}$ denote the set of \emph{words} in $A$ and of \emph{length} $n$. Also let $A^0 = \{\epsilon\}$ denote the set containing the \emph{empty word} $\epsilon$ of length 0. Let $A^*= \bigcup_{n\geq 0}A^n$ denote the set of \emph{finite words} in $A$. Given any finite word $w\in A^*$ and length $n\in\N$, we assign \begin{equation} A^*_w:= \{u \in A^*: u=wv\}\quad\text{and}\quad A^n_w = \{wv\in A^*_w : |wv|=n\}.\end{equation} The set $A^*_w$ can be viewed in a natural way as a tree with root at $w$. We also let $A^{\N}$ denote the set of \emph{infinite words} in $A$. Given an infinite word $w=i_1 i_2\cdots \in A^{\N}$ and integer $n\geq 0$, we define the \emph{truncated word} $w(n)=i_1\cdots i_n$ with the convention that $w(0) = \epsilon$.

We now organize the set of finite words in $A$, according to the Lipschitz norms of the associated contractions! This will be used pervasively throughout the rest of the paper. For each word $w=i_1\cdots i_n \in A^*$, define the map \begin{equation}\phi_w:=\phi_{i_1}\circ\dots\circ\phi_{i_n}\end{equation} and the weight \begin{equation}L_w := L_{i_1}\cdots L_{i_n}.\end{equation} By convention, for the empty word, we assign $\phi_\epsilon:=\mathrm{Id}_X$ and $L_\epsilon:=1$. For all $w\in A^*$, define the \emph{cylinder} $K_w$ to be the image of the attractor $K:=K_\mathcal{F}$ under $\phi_w$, \begin{equation}K_w:=\phi_w(K).\end{equation} Note that $L_{uv}=L_uL_v$ for every pair of words $u$ and $v$, where $uv$ denotes the concatenation of $u$ followed by $v$. For each $\d\in (0,1)$, define \begin{equation}
A^*(\d) := \{i_1\cdots i_n \in A^* : \text{$n\geq 1$ and $L_{i_1}\cdots L_{i_n} < \d \leq L_{i_1}\cdots L_{i_{n-1}}$} \}\end{equation}
with the convention $L_1\cdots L_{i_{n-1}} = 1$ if $n=1$. Also define $A^*(1):=\{\epsilon\}$. Finally, given any finite word $w\in A^*$, set $A^*_w(\delta):=A^*_w\cap A^*(\delta)$.

\begin{lem}\label{lem:partition}
Given finite words $w\in A^*$ and $w' = wi_1\cdots i_n$ and a number $L_{w'}<\d \leq L_w$, there exists a unique finite word $u=wi_1\cdots i_m$ ($m\leq n$) such that $u\in A^*_w(\d)$.
\end{lem}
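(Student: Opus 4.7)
The plan is to examine the finite sequence of ``intermediate'' prefixes $u_m := wi_1\cdots i_m$ for $m=0,1,\dots,n$ and locate the unique index at which the weight $L_{u_m}$ first drops below $\delta$. Because every generator satisfies $L_i\in(0,1)$, the sequence $L_{u_m}=L_w\prod_{j=1}^{m}L_{i_j}$ is strictly decreasing in $m$, and the hypotheses give the boundary values $L_{u_0}=L_w\geq\delta>L_{w'}=L_{u_n}$. So on the nose, the argument is the one-sentence observation that a strictly monotone finite sequence crosses any intermediate value exactly once.

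For the main case $\delta<1$, I would let $m^*$ be the smallest $m\in\{1,\dots,n\}$ for which $L_{u_{m^*}}<\delta$; such an $m^*$ exists by the boundary conditions, and by minimality also satisfies $L_{u_{m^*-1}}\geq\delta$. This is precisely the pair of inequalities defining $A^*(\delta)$ for a word of length $|w|+m^*\geq 1$, so $u_{m^*}\in A^*(\delta)$, and $u_{m^*}\in A^*_w$ is automatic. Uniqueness is then forced by strict monotonicity: any competing index $m'$ would yield $L_{u_{m'-1}}<\delta$ or $L_{u_{m'}}\geq\delta$, violating a defining inequality for $A^*(\delta)$. The possibility $m=0$ in the case $\delta<1$ must be ruled out separately: if $w=\epsilon$ then $u=\epsilon$ fails the length-$\geq 1$ requirement, while if $w\neq\epsilon$ then $L_u=L_w\geq\delta$ contradicts $L_u<\delta$.

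The only subtlety is the boundary case $\delta=1$, since $A^*(1)$ is defined by convention rather than by the crossing inequality. Here the hypothesis $L_w\geq 1$ combined with $L_i<1$ for every $i$ forces $w=\epsilon$, and then $u=\epsilon$ (so $m=0$) is both the unique element of $A^*(1)=\{\epsilon\}$ and automatically lies in $A^*_\epsilon$. I do not anticipate any real obstacle; the mild wrinkle is just the definitional difference between $A^*(\delta)$ for $\delta<1$ and $A^*(1)$, which is why the boundary $\delta=1$ merits a one-line separate check.
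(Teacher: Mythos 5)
Your proof is correct and takes essentially the same route as the paper's: locate the unique index at which the strictly decreasing weight sequence $L_{u_m}$ crosses $\delta$, and invoke monotonicity for uniqueness. You handle the edge cases ($m=0$ and $\delta=1$) more explicitly than the paper's terse two-sentence proof, but the core argument is identical.
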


\begin{proof}
Existence of $u$ follows from the fact that the sequence $a_n = L_{wi_1\cdots i_n}$ is decreasing. Uniqueness of $u$ follows from the fact that if $wi_1\cdots i_m \in A^*_w(\d)$, then for every $l<m$, $L_{wi_1\cdots i_l} \geq \d$, whence $wi_1\cdots i_l \not\in A^*_w(\d)$.
\end{proof}

\begin{lem}\label{l:sum-Ls} For every finite word $w\in A^*$ and number $0<\delta \leq L_w$, \begin{equation} \label{sim-sum} \sum_{w'\in A^*_w(\delta)} L_{w'}^s=L_w^s.\end{equation} \end{lem}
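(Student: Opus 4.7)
The identity rests on the defining relation $\sum_{i=1}^k L_i^s = 1$, which specializes to the local balance
\[
L_u^s = \sum_{i=1}^k L_{ui}^s \qquad \text{for every } u \in A^*.
\]
I plan to prove the lemma by strong induction on the depth
\[
N(w,\delta) := \max\{|u| - |w| : u \in A^*_w(\delta)\}.
\]
This integer is finite: if $u \in A^*_w(\delta)$ and $u'$ denotes $u$ with its last letter removed, then the definition of $A^*(\delta)$ gives $L_{u'} \geq \delta$, while $L_{u'} \leq L_w \cdot L_{\max}^{|u|-|w|-1}$ with $L_{\max} := \max_i L_i < 1$; rearranging yields a uniform upper bound on $|u|-|w|$. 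The hypothesis $\delta \leq L_w$ also ensures $N(w,\delta) \geq 1$.

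The first step is to record the decomposition $A^*_w(\delta) = \bigsqcup_{i=1}^k A^*_{wi}(\delta)$. Indeed, since $\delta \leq L_w$, the word $w$ itself cannot lie in $A^*(\delta)$, so every element $u$ of $A^*_w(\delta)$ properly extends $w$ and hence lies in exactly one $A^*_{wi}$; conversely $A^*_{wi}(\delta) \subset A^*_w(\delta)$ is immediate. The second step is to evaluate $\sum_{u \in A^*_{wi}(\delta)} L_u^s$ for each $i$ by splitting cases. If $L_{wi} < \delta$, then $wi \in A^*_{wi}(\delta)$ while any strict extension of $wi$ already has its parent of weight at most $L_{wi} < \delta$, so $A^*_{wi}(\delta) = \{wi\}$ and the sum equals $L_{wi}^s$. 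If instead $L_{wi} \geq \delta$, the inductive hypothesis applies at $(wi,\delta)$ (whose depth is at most $N(w,\delta)-1$) and yields the same value $L_{wi}^s$. Summing over $i$ and invoking the local balance gives
\[
\sum_{u \in A^*_w(\delta)} L_u^s = \sum_{i=1}^k L_{wi}^s = L_w^s \sum_{i=1}^k L_i^s = L_w^s,
\]
as desired.

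The argument is essentially bookkeeping; there is no serious conceptual obstacle. The only points that require a little care are verifying finiteness of $N(w,\delta)$ so that the induction is well-founded, and correctly handling the dichotomy at each child $wi$, since membership in $A^*(\delta)$ is governed by the weight of the \emph{parent} of a word rather than by $L_u$ alone.
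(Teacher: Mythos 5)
Your proof is correct. It differs in structure from the paper's: the paper first fixes a uniform level $N$ large enough that $L_u<L_1\delta$ for all $u\in A^N$, notes via Lemma~\ref{lem:partition} that $A^*_w(\delta)$ is a ``section'' of the tree between levels $|w|$ and $N$, and then evaluates the sum by a two-pass telescope --- expanding each $w'\in A^*_w(\delta)$ down to its descendants in $A^N_w$ (so the sum becomes $\sum_{w''\in A^N_w}L_{w''}^s$), then collapsing $A^N_w$ up level by level to $L_w^s$, each step invoking $\sum_i L_i^s=1$. You instead decompose $A^*_w(\delta)=\bigsqcup_i A^*_{wi}(\delta)$ at the root and do strong induction on the depth $N(w,\delta)$, handling the dichotomy $L_{wi}<\delta$ (where $A^*_{wi}(\delta)=\{wi\}$) versus $L_{wi}\geq\delta$ (where the inductive hypothesis applies). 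Both reduce to repeated use of $\sum_i L_i^s=1$; your version is more local and self-contained (it does not need to choose a global depth $N$, and re-derives the relevant piece of Lemma~\ref{lem:partition} inline), while the paper's version makes explicit that $A^*_w(\delta)$ is an ``antichain section'' of a finite subtree, a picture it reuses elsewhere. The care you take to verify well-foundedness of the induction and the parent-based membership criterion is exactly the right care to take, and the argument is complete.
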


\begin{proof} By \eqref{L-increasing}, we can choose $N\in\N$ sufficient large so that $L_{u}<L_1\delta$ for all words $u\in A^N$ (any integer $N>\log_{L_k}(L_1\delta)$ will suffice). In particular, if $wv=w{v_1\dots v_n}\in A_w^N$, then $wv\not\in A_w^*(\delta)$ (since $L_{wv_1\dots v_{n-1}}<\delta$) but $wv$ has an ancestor $wv_1\dots v_m\in A_w^*(\delta)$ by Lemma \ref{lem:partition}. Hence the subtree $T=\bigcup_{l=|w|}^N A_w^{l}$ of $A_w^*$ contains $A^*_w(\delta)$. To establish \eqref{sim-sum}, we repeatedly use the defining condition $L_1^s+\dots+L_k^s=1$ for the similarity dimension, first working ``down" the tree $T$ from each word $w'\in A^*(\delta)$ to its descendants in $A^N_w$ and then working ``up" the tree $T$ level by level: \begin{equation*}\sum_{w'\in A^*_w(\delta)} L_{w'}^s = \sum_{w'' \in A^N_w} L_{w''}^s = \sum_{w''' \in A^{N-1}_w} L_{w'''}^s=\dots= L_w^s. \qedhere\end{equation*}
\end{proof}

\begin{lem}\label{6}
For all $0<R\leq 1$, $w\in A^*(R)$, and $0<r\leq L_w$,
\begin{equation}\label{e:cardest}L_1^{s} (R/r)^{s} < \card A^*_w(r) < L_1^{-s} (R/r)^{s}.\end{equation} In particular, if $0<r\leq 1$, then \begin{equation}\label{e:cardest2} L_1^s r^{-ms}<\card A^*(r^m)<L_1^{-s} r^{-ms}\quad\text{for all }m\in\N.\end{equation}
\end{lem}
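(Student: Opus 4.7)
The plan is to deduce both bounds from the sum formula $\sum_{w'\in A^*_w(r)} L_{w'}^s = L_w^s$ established in Lemma \ref{l:sum-Ls}, after first trapping the individual weights $L_{w'}$ in a narrow range around $r$.

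First I would observe the key two-sided estimate: for every $w' = wi_1\cdots i_n \in A^*_w(r)$, the defining condition forces $L_{w'} < r \leq L_{w'(|w'|-1)}$, and since $L_{w'} = L_{w'(|w'|-1)} L_{i_n} \geq L_1 \cdot L_{w'(|w'|-1)}$, we obtain
\begin{equation*}
L_1 r \leq L_{w'} < r \qquad \text{for every } w' \in A^*_w(r).
\end{equation*}
Raising to the $s$-th power, summing over $A^*_w(r)$, and invoking Lemma \ref{l:sum-Ls} gives
\begin{equation*}
N\cdot L_1^s r^s \;\leq\; \sum_{w'\in A^*_w(r)} L_{w'}^s \;=\; L_w^s \;\leq\; N\cdot r^s,
\end{equation*}
where $N := \card A^*_w(r)$. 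Rearranging yields $(L_w/r)^s \leq N \leq L_1^{-s}(L_w/r)^s$, which is essentially the desired conclusion but phrased in terms of $L_w$ instead of $R$.

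Next I would convert $L_w$ to $R$ by applying exactly the same trapping principle to $w$ itself: since $w \in A^*(R)$, the definition gives $L_1 R \leq L_w \leq R$ (with equality $L_w = R = 1$ only in the edge case $w = \epsilon$, $R = 1$, where $A^*(1) = \{\epsilon\}$ by convention). Plugging these bounds into $(L_w/r)^s \leq N \leq L_1^{-s}(L_w/r)^s$ produces
\begin{equation*}
L_1^s (R/r)^s \;\leq\; N \;\leq\; L_1^{-s}(R/r)^s,
\end{equation*}
and the strict inequalities claimed in \eqref{e:cardest} follow by tracking the place in the argument where the sum inequality is actually strict: the bound $L_{w'}^s < r^s$ is strict for each summand, so the right inequality in $N L_1^s r^s \leq L_w^s \leq N r^s$ becomes strict (giving $N > (L_w/r)^s \geq L_1^s(R/r)^s$), and analogously on the other side when $w$ is nontrivial. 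The degenerate case $w = \epsilon$, $R = 1$ can be verified directly since then $N \geq 1$ and the bounds reduce to $L_1^s r^{-s} < N < L_1^{-s} r^{-s}$, which holds because $L_1 < 1$ forces a gap.

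Finally, the second assertion \eqref{e:cardest2} is an immediate specialization: take $w = \epsilon$, $R = 1$, and replace $r$ by $r^m$ in \eqref{e:cardest}. The only mild subtlety in the whole proof is bookkeeping the two strict inequalities and the boundary case $R=1$; otherwise the argument is a direct application of Lemma \ref{l:sum-Ls}.
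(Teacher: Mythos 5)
Your proof is essentially the paper's argument, organized slightly differently. The paper packages both trapping estimates at once, writing $L_1 r \leq L_{w'} < r$ and $L_1 R \leq L_w < R$ and then going straight to $L_1^s r^s \,\card A^*_w(r) \leq L_w^s < R^s$ and $r^s\,\card A^*_w(r) > L_w^s \geq L_1^s R^s$; you first derive the two-sided estimate in terms of $L_w$ and then convert to $R$. These are the same calculation, and both rest on exactly the same inputs: the sandwich $L_1\delta \leq L_{w'} < \delta$ for $w'\in A^*(\delta)$ together with Lemma \ref{l:sum-Ls}.

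One small remark on the edge case $w=\epsilon$, $R=1$: you are right to notice it (the paper's claim ``$L_w < R$'' silently fails there), but the resolution you offer — that ``$L_1<1$ forces a gap'' — does not actually rescue the strict upper bound. If all contraction ratios are equal ($L_1=\cdots=L_k$) and $r=L_1^m$, then $A^*(r)=A^{m+1}$ and $\card A^*(r)=k^{m+1}=L_1^{-s}r^{-s}$ exactly, so the right-hand inequality in \eqref{e:cardest} is an equality. This is a genuine (if harmless) imprecision in the lemma as stated — the paper's own proof has the same gap, and every application downstream only needs $\lesssim$ rather than $<$ — so it is fine to flag it, but your attempted fix should either give a real argument or simply weaken $<$ to $\leq$ in that corner case.
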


\begin{proof}
Fix $0<R\leq 1$, $w\in A^*(R)$, and $0<r\leq L_w$. Then $L_w<R\leq L_w/L_1$, and similarly, for all $w'\in A_w^*(r)$, we have $L_{w'}<r\leq L_{w'}/L_{1}$. By Lemma \ref{l:sum-Ls},
\[ L_1^sr^s(\card A_w^*(r)) \leq \sum_{w'\in A^*_w(r)}L_{w'}^s= L_w^s < R^s. \]
Similarly,
\[ r^s(\card A_w^*(r)) > \sum_{w'}L_{w'}^s = L_w^s \geq L_1^s R^s \] This establishes \eqref{e:cardest}. To derive \eqref{e:cardest2}, simply take $0<r\leq 1=R$ and $w=\epsilon$.
\end{proof}

\section{H\"older connectedness of IFS attractors}\label{sec:Holdcon}

In this section, we first prove Theorem \ref{thm:connect}, and afterwards, we derive Theorem \ref{thm:main} as a corollary. To that end, for the rest of this section, fix an IFS $\mathcal{F}=\{\phi_1,\dots,\phi_k\}$ over a complete metric space $(X,d)$ whose attractor $K:=K_{\mathcal{F}}$ is connected and has positive diameter. Set $s:=\sdim(\mathcal{F})$, and for each $i\in\{1,\dots,k\}$, set $L_i := \Lip(\phi_i)$. By Lemma \ref{lem:connIFS}, we may assume without loss of generality that \begin{equation}\label{L-increasing2} 0< L_1 \leq \cdots \leq L_k < 1.\end{equation} In particular, we may adopt the notation, conventions, and lemmas in \S\ref{sec:words}.

\subsection{H\"older connectedness (Proof of Theorem \ref{thm:connect})}

\begin{lem}[chain lemma]\label{lem:conn} Assume that $K_\mathcal{F}$ is connected.
Let $w\in A^*$ and $0<\d< L_w$. If $x,y \in K_w$, then there exist distinct words $w_1,\dots, w_n \in A^*_w(\d)$ such that $x \in K_{w_1}$, $y\in K_{w_n}$, and $K_{w_i}\cap K_{w_{i+1}} \neq \emptyset$ for all $i\in\{1,\dots,n-1\}$.
\end{lem}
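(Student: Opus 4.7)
The plan is to exploit the connectedness of $K_w$ via the cover by $\delta$-cylinders. Since $\phi_w$ is continuous and $K$ is connected by hypothesis, $K_w=\phi_w(K)$ is connected. By Lemma \ref{6} the index set $A_w^*(\delta)$ is finite, so the family $\{K_{w'}:w'\in A_w^*(\delta)\}$ is a finite cover of $K_w$ by compact cylinders. I will put an equivalence relation on $A_w^*(\delta)$ generated by declaring $w'\sim w''$ whenever $K_{w'}\cap K_{w''}\neq\emptyset$, then use connectedness of $K_w$ to rule out more than one equivalence class, and finally extract the desired chain.

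First I would verify that $K_w=\bigcup_{w'\in A_w^*(\delta)} K_{w'}$. The inclusion ``$\supseteq$'' is immediate since each $w'=wv$ gives $K_{w'}=\phi_w(\phi_v(K))\subseteq K_w$. For ``$\subseteq$'', take $z\in K_w$ and write $z=\phi_w(y)$ with $y\in K$. Iterating the identity $K=\bigcup_i\phi_i(K)$ produces an infinite word $i_1i_2\cdots\in A^{\N}$ with $z\in K_{wi_1\cdots i_n}$ for every $n$. Because each $L_i<1$, the weight $L_{wi_1\cdots i_n}$ tends to $0$, so for $n$ large we have $L_{wi_1\cdots i_n}<\delta\leq L_w$; Lemma \ref{lem:partition} then supplies the unique ancestor $u\in A_w^*(\delta)$ with $z\in K_u$.

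Next, for each equivalence class $C\subseteq A_w^*(\delta)$, let $U_C:=\bigcup_{w'\in C}K_{w'}$. Each $U_C$ is a finite union of compact sets, hence closed. Crucially, distinct classes give disjoint sets: if $U_{C_1}\cap U_{C_2}\neq\emptyset$, then some $K_{w'}\cap K_{w''}\neq\emptyset$ with $w'\in C_1$, $w''\in C_2$, forcing $w'\sim w''$ and $C_1=C_2$. Thus $K_w$ decomposes as a disjoint union of finitely many closed sets $\{U_C\}_C$, and connectedness of $K_w$ forces a single equivalence class. Picking $w_a,w_b\in A_w^*(\delta)$ with $x\in K_{w_a}$ and $y\in K_{w_b}$ (guaranteed by the covering in the previous paragraph) and applying the equivalence, we obtain a chain $w_a=u_1,u_2,\ldots,u_n=w_b$ in $A_w^*(\delta)$ with $K_{u_j}\cap K_{u_{j+1}}\neq\emptyset$.

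Finally, to meet the distinctness requirement I would prune repetitions: if $u_i=u_j$ for some $i<j$, delete $u_{i+1},\ldots,u_j$; the resulting chain still has intersecting consecutive cylinders (since $K_{u_i}=K_{u_j}$ meets $K_{u_{j+1}}$) and is strictly shorter. Iterating yields a chain with all entries distinct, which we relabel as $w_1,\ldots,w_n$. There is no real obstacle here; the only point requiring care is the bookkeeping that $A_w^*(\delta)$ is finite and that the covering step uses Lemma \ref{lem:partition} correctly to place each point of $K_w$ into some $\delta$-cylinder, rather than naively invoking self-similarity at a fixed depth.
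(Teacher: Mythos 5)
Your proof is correct, and it takes a genuinely different (though equally standard) route from the paper. The paper runs a layered breadth-first search: it defines $E_1$ to be the set of $\delta$-cylinder labels containing $x$, then iteratively lets $E_{i+1}$ be the new labels whose cylinders meet something in $E_i$. Connectedness of $K_w$ forces each layer to be nonempty until the union exhausts $A^*_w(\delta)$, finiteness forces termination, and the chain is then read off by tracing back from the layer containing $y$'s cylinder to $E_1$. You instead invoke the classical topological fact that a connected space cannot be partitioned into two or more nonempty, pairwise-disjoint closed sets: you put the equivalence relation generated by ``cylinders intersect'' on $A^*_w(\delta)$, observe that the class unions $U_C$ are closed and pairwise disjoint and cover $K_w$, conclude there is only one class, and then extract and prune a chain. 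The tradeoffs are minor: your version isolates the connectedness input very cleanly and makes the finiteness role transparent, at the cost of an explicit pruning step to secure distinctness; the paper's BFS produces the chain more directly and by construction yields a shortest chain (which would be useful if one later needed control on $n$, though the paper does not use this). Both hinge on the same covering identity $K_w=\bigcup_{u\in A^*_w(\delta)}K_u$ via Lemma \ref{lem:partition}, which you rightly take the time to verify.
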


\begin{proof}
We first remark that $K_w = \bigcup_{u\in A^*_w(\d)} K_u$ by Lemma \ref{lem:partition}. Define
\[ E_1 := \{u\in A^{*}_w(\d) : x \in K_u\}.\]
Assuming we have defined $E_1,\dots,E_i \subset A^*_w(\d)$ for some $i\in\N$, define
\[ E_{i+1} := \{u \in A^*_w(\d)\setminus E_i : K_u \cap K_v \neq \emptyset \text{ for some $v \in E_i$}\}.\]
Because $K_w$ is connected (since $K_\mathcal{F}$ is connected), if $\bigcup_{i=1}^j E_i\neq A^*_w(\d)$, then $E_{j+1}\neq \emptyset$. Since $A^*_w(\delta)$ is finite, it follows that $\bigcup_{i=1}^N E_i = A^*_w(\d)$ for some $N \in \N$.

Choose a word $v\in A^*_w(\delta)$ such that $y\in K_{v}$. Then $v\in E_n$ for some $1\leq n\leq N$. Label $v=:w_n$. By design of the sets $E_i$, we can find a chain of distinct words $w_1,\dots, w_n$ with $K_{w_i}\cap K_{w_{i+1}}$ for all $1\leq i\leq n-1$. Finally, $x\in K_{w_1}$, because $w_1\in E_1$.
\end{proof}

Theorem \ref{thm:connect} is a special case of the following more precise result (take $w$ to be the empty word). Recall that a metric space $(X,d)$ is \emph{quasiconvex} if any pair of points $x$ and $y$ can be joined by a Lipschitz curve $f:[0,1]\rightarrow X$ with $\Lip(f)\lesssim_X d(x,y)$. By analogy, the following proposition may be interpreted as saying that  connected attractors of IFS are ``$(1/s)$-H\"older quasiconvex".

\begin{prop}\label{lem:Holderconn} For any $w\in A^*$ and $x,y \in K_w$, there exists a $(1/s)$-H\"older continuous map $f:[0,L_w^s] \to K_w$ with $f(0)=x$, $f(L_w^s)=y$, and $\Hold_{1/s}{f} \lesssim_{s,L_1} \diam K$.
\end{prop}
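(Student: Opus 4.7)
The plan is to isometrically embed the compact set $K$ into $\ell_\infty$ via the Kuratowski--Fr\'echet embedding and construct $f$ as the uniform limit of a sequence of piecewise linear maps $f_j:[0,L_w^s]\to\ell_\infty$ produced by iterating the chain lemma (Lemma~\ref{lem:conn}). Fix a geometric sequence of scales $\delta_j:=L_w c^j$ with $c:=L_1/2$, and build the chains recursively: at stage $j=1$, Lemma~\ref{lem:conn} gives a chain $w_1^1,\dots,w_{n_1}^1\in A^*_w(\delta_1)$ with $x\in K_{w_1^1}$, $y\in K_{w_{n_1}^1}$, and $K_{w_i^1}\cap K_{w_{i+1}^1}\neq\emptyset$. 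Once a stage-$j$ chain together with transition points $z_i^j\in K_{w_i^j}\cap K_{w_{i+1}^j}$ (with $z_0^j:=x$ and $z_{n_j}^j:=y$) is in hand, I apply Lemma~\ref{lem:conn} inside each cylinder $K_{w_i^j}$ at scale $\delta_{j+1}$ to connect $z_{i-1}^j$ to $z_i^j$; this is legal because $\delta_{j+1}=c\delta_j<L_1\delta_j\le L_{w_i^j}$. Concatenating the sub-chains gives the stage-$(j+1)$ chain, whose cylinders refine those of stage $j$. By Lemma~\ref{l:sum-Ls}, $[0,L_w^s]$ partitions into consecutive intervals $I_i^j$ of length $L_{w_i^j}^s$, and I define $f_j$ as the piecewise linear map in $\ell_\infty$ sending the endpoints of $I_i^j$ to $z_{i-1}^j$ and $z_i^j$.

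Two driving estimates then follow routinely. On each piece $I_i^j$, the image of $f_j$ is a segment of length at most $\diam K_{w_i^j}\le L_{w_i^j}\diam K$ over an interval of length $L_{w_i^j}^s$, so $\Lip(f_j|_{I_i^j})\le L_{w_i^j}^{1-s}\diam K\le L_1^{1-s}\delta_j^{1-s}\diam K$, using $L_{w_i^j}\ge L_1\delta_j$ from the definition of $A^*_w(\delta_j)$. Since the global Lipschitz constant of a piecewise linear map equals its maximum piece slope, $\Lip(f_j)\le L_1^{1-s}\delta_j^{1-s}\diam K$. By the refinement structure, on each $I_i^j$ both $f_j$ and $f_{j+1}$ take values in the convex hull of $K_{w_i^j}$ in $\ell_\infty$, so $\|f_j-f_{j+1}\|_\infty\le L_{w_i^j}\diam K\le \delta_j\diam K$. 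Moreover $f_j([0,L_w^s])$ lies in the $\delta_j\diam K$-neighborhood of $K_w$, so any uniform limit automatically maps into the compact set $K_w$; the endpoint conditions $f(0)=x$ and $f(L_w^s)=y$ pass to the limit from the corresponding properties of each $f_j$.

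The concluding step is to feed these bounds into Lemma~\ref{l:LipHold} with $t=1$. The one delicate bookkeeping point---and the main obstacle I anticipate---is ensuring the final H\"older constant depends only on $s$ and $L_1$ (and $\diam K$), not on the word $w$. Rescaling by $\tilde f_j(u):=f_j(L_w^s u)$ for $u\in[0,1]$ and using the reindexed scales $\rho_j:=c^{j-1}$ (so $\rho_1=1$), the Lipschitz and closeness bounds translate into $A_j\le\alpha\rho_j^{1-s}$ and $B_j\le\beta\rho_j$ with both $\alpha$ and $\beta$ proportional to $L_w$; the algebraic miracle $L_w^s\cdot L_w^{1-s}=L_w$ is what makes this balance out. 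Lemma~\ref{l:LipHold} then produces a $(1/s)$-H\"older limit $\tilde f:[0,1]\to\ell_\infty$ with $\Hold_{1/s}\tilde f\le C(s,L_1)L_w\diam K$. Unrescaling via $f(t)=\tilde f(t/L_w^s)$ multiplies the $(1/s)$-H\"older seminorm by $L_w^{-1}$, so the factors of $L_w$ cancel exactly and $\Hold_{1/s}(f)\lesssim_{s,L_1}\diam K$, as required.
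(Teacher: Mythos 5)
Your proposal is correct and follows essentially the same route as the paper: embed $K$ in $\ell_\infty$ via Kuratowski, iterate the chain lemma at geometric scales to produce nested piecewise linear approximations, bound their Lipschitz constants and successive distances using the word weights, and close with Lemma~\ref{l:LipHold}. The only genuine difference is cosmetic: the paper reduces to $w=\epsilon$ up front via the conjugation $f=\phi_w\circ g\circ\zeta_w$, whereas you carry the general $w$ through and cancel the $L_w$ factors at the end by reparameterizing $[0,L_w^s]\to[0,1]$ and back; both give the $w$-independent constant. One small imprecision worth correcting: since the chain $\{w_1^j,\dots,w_{n_j}^j\}$ is in general a proper subset of $A^*_w(\delta_j)$, Lemma~\ref{l:sum-Ls} only gives $\sum_i L_{w_i^j}^s\leq L_w^s$, so the intervals $I_i^j$ should be taken with $|I_i^j|\geq L_{w_i^j}^s$ (as the paper does), not exactly equal; this does not affect the Lipschitz estimate, which uses the lower bound on $|I_i^j|$, and leaves room for the recursive refinement.
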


\begin{proof} By rescaling the metric on $X$, we may assume without loss of generality that $\diam K=1$. Furthermore, it suffices to prove the proposition for $w=\epsilon$ and $K_w=K$. For the general case, fix $w\in A^*$ and $x,y \in K_w$. Choose $x',y' \in K$ such that $\phi_w(x') = x$ and $\phi_w(y') = y$. Define $$\zeta_w: [0,L_w^s] \to [0,1],\qquad \zeta_w(t) = (L_w)^{-s}t\quad\text{for all }t\in[0,L_w^s].$$ If the proposition holds for $w=\epsilon$, then there exists a $(1/s)$-H\"older map $g : [0,1] \to K$ with $g(0)=x'$, $g(1)=y'$, and $\Hold_{1/s}{g} \lesssim_{s,L_1} 1$. Then the map $f \equiv \phi_w\circ g\circ\zeta_w : [0,L_w^s] \to K_w$ plainly  satisfies $f(0) =x$ and $f(L_w^s) = y$. Moreover, for any $p,q \in [0,L_w^s]$,
\[ d(f(p),f(q)) \leq L_w\, d(g(\zeta_w(p)),g(\zeta_w(q))) \lesssim_{s,L_1}L_w|\zeta_w(p)-\zeta_w(q)|^{1/s} = |p-q|^{1/s}.\] Thus, $\Hold_{1/s}(f)\lesssim_{s,L_1} 1$, independent of the word $w$.

To proceed, observe that by the Kuratowski embedding theorem, we may view $K$ as a subset of $\ell_\infty$, whose norm we denote by $|\cdot|_{\infty}$. Fix any $r>0$ with $L_1\lesssim r\leq L_1$ (which ensures that $r^{m+1}\leq L_1r^m\leq L_w$ whenever $w\in A^*(r^m)$) and fix $x,y \in K$. The map $f$ will be a limit of piecewise linear maps $f_n : [0,1] \to \ell_{\infty}$. In particular, for each $m\in \N$, we will construct a subset $\mathcal{W}_m \subset A^*(r^m)$, a family of nondegenerate closed intervals $\mathscr{E}_m$, and a continuous map $f_m:[0,1] \to \ell_{\infty}$ satisfying the following properties:
\begin{enumerate}
\item[{(P1)}] The intervals in $\mathscr{E}_m$ have mutually disjoint interiors and their union $\bigcup\mathscr{E}_m = [0,1]$. Furthermore, $f_m(0)=x$ and $f_m(1)=y$.
\item[{(P2)}] For each $I\in \mathscr{E}_m$, $f_m|I$ is linear and there exists $u \in \mathcal{W}_m$ such that $f_m(\partial I) \subset K_{u}$ and $|I| \geq L_{u}^s$. Moreover, if $I,I' \in \mathscr{E}_m$ are distinct, then the corresponding words $u,u' \in \mathcal{W}_m$ are also distinct.
\item[{(P3)}] For each $I \in \mathscr{E}_{m+1}$, there exists $J \in \mathscr{E}_m$ such that $f_{m+1}|\partial J = f_m|\partial J$. Moreover, $|f_m(p)-f_{m+1}(p)|_{\infty} < 3r^m$ for all $p\in [0,1]$.
\end{enumerate}
Let us first see how to complete the proof, assuming the existence of family of such maps. On one hand, property (P3) gives
\begin{equation}\label{eq:1}
\|f_m - f_{m+1}\|_{\infty} < 3r^m.
\end{equation}
On the other hand, by property (P2), $|I| \geq L_1^sr^{ms}$ and $\diam{f_m(I)} < r^m$ for all $I\in \mathscr{E}_m$. Therefore, for all $p,q\in [0,1]$,
\begin{equation}\label{eq:2}
|f_m(p) - f_m(q)|_{\infty} \leq L_1^{-s} r^{m(1-s)}|p-q|.
\end{equation}
By (\ref{eq:1}), (\ref{eq:2}), and Lemma \ref{l:LipHold}, the sequence $(f_m)_{m=1}^\infty$ converges uniformly to a $(1/s)$-H\"older map $f:[0,1] \to \ell_{\infty}$ with $f(0)=x$, $f(1) =y$, and $\Hold_{1/s}{f} \lesssim_{s,L_1,r} 1\simeq_{s,L_1} 1$. Finally, by (P2) and \eqref{eq:2}, $$\dist(f_m(p),K) \lesssim_{s,L_1} r^m\quad\text{for all $m\in\N$ and $p\in[0,1]$}.$$ Therefore, $f([0,1]) \subset K$ and the proposition follows.

It remains to construct $\mathcal{W}_m$, $\mathscr{E}_m$, and $f_m$ satisfying properties (P1), (P2), and (P3). The construction is in an inductive manner.

By Lemma \ref{lem:conn}, there is a set $\mathcal{W}_1 = \{w_1,\dots,w_n\}$ of distinct words in $A^*(r)$, enumerated so that $x\in K_{w_1}$, $y\in K_{w_n}$, and $K_{w_{i}}\cap K_{w_{i+1}} \neq \emptyset$ for $i\in\{1,\dots,n-1\}$. For each $i\in\{1,\dots,n-1\}$, choose $p_i \in K_{w_i}\cap K_{w_{i+1}}$. To proceed, define $\mathscr{E}_1=\{I_1,\dots, I_n\}$ to be closed intervals in $[0,1]$ with disjoint interiors, enumerated according to the orientation of $[0,1]$, whose union is $[0,1]$, and such that $|I_j| \geq L_{w_i}^s$ for all $i\in \{1,\dots,n\}$.
We are able to find such intervals, since by Lemma \ref{l:sum-Ls},
\[ 1 = \sum_{u\in A^*(r)}L_u^s \geq \sum_{u \in \mathcal{W}_1} L_{u}^s.\]
Next, define $f_1 : [0,1] \to \ell_{\infty}$ in a continuous fashion so that $f_1$ is linear on each $I_i$ and:
\begin{enumerate}
\item $f_1(0) = x$ and $f_1(I_1)$ is the segment that joins $x$ with $p_1$;
\item $f_1(1) = y$ and $f_1(I_n)$ is the segment that joins $p_{n-1}$ with $y$; and,
\item for $j\in\{2,\dots,n-1\}$, if any, $f_1(I_j)$ is the segment that joins $p_{j-1}$ with $p_{j}$.
\end{enumerate}

Suppose that for some $m\in\N$, we have defined $\mathcal{W}_m \subset A^*(r^m)$, a collection $\mathscr{E}_m$, and a piecewise linear map $f_m : [0,1] \to \ell_{\infty}$ that satisfy (P1)--(P3). For each $I \in \mathscr{E}_m$, we will define a collection of intervals $\mathscr{E}_{m+1}(I)$ and a collection of words $\mathcal{W}_{m+1}(I) \subset A^*(r^{m+1})$. We then set $\mathscr{E}_{m+1} = \bigcup_{I \in \mathscr{E}_m}\mathscr{E}_{m+1}(I)$ and $\mathcal{W}_{m+1} =  \bigcup_{I \in \mathscr{E}_m}\mathcal{W}_{m+1}(I)$.
In the process, we will also define $f_{m+1}$. To proceed, suppose that $I\in \mathscr{E}_m$, say $I = [a,b]$, with $I$ corresponding to the word $w\in \mathcal{W}_m$. Since $K$ is connected, by Lemma \ref{lem:conn}, there exist distinct words $\mathcal{W}_{m+1}(I) = \{w_1,\dots, w_l\} \subset A^*_w(r^{m+1})$ such that $f_m(a) \in K_{w_1}$, $f_m(b)\in K_{w_l}$, and $K_{w_j}\cap K_{w_{j+1}} \neq \emptyset$ for all  $j\in\{1,\dots, l-1\}$. Let $\mathcal{E}_{m+1}(I) = \{I_1,\dots, I_l\}$ be closed intervals in $I$ with mutually disjoint interiors, enumerated according to the orientation of $I$, whose union is $I$, and such that $a\in I_1$, $b\in I_l$ and $|I_j| \geq L_{w_j}^s$ for all $j\in\{1,\dots,l\}$. We are able to find such intervals, since by our inductive hypothesis and Lemma \ref{l:sum-Ls},
\begin{equation*}\label{eq:partition}
|I| \geq L_w^s = \sum_{u\in A^*_w(r^{m+1})}L_u^s \geq \sum_{i=1}^l L_{w_i}^s.
\end{equation*}
For each $j\in \{1,\dots,l-1\}$, choose $p_j \in K_{w_j}\cap K_{w_{j+1}}$.

With the choices above, now define $f_{m+1}|I : I \to \ell_{\infty}$ in a continuous fashion so that $f_{m+1}|J$ is linear for each $J\in\mathscr{E}_{m+1}(I)$ and:
\begin{enumerate}
\item $f_{m+1}(a) = f_m(a)$ and $f_{m+1}(I_1)$ is the segment that joins $y$ with $p_1$;
\item $f_{m+1}(b) = f_m(b)$ and $f_{m+1}(I_l)$ is the segment that joins $p_{l-1}$ with $f_m(b)$; and,
\item for $j\in\{2,\dots,l-1\}$ (if any), $f_{m+1}(I_j)$ is the segment that joins $p_{j-1}$ with $p_{j}$.
\end{enumerate} Properties (P1), (P2), and the first claim of (P3) are  immediate. To verify the second claim of (P3), fix $z\in [0,1]$. By (P1), there exists $I\in \mathscr{E}_{m+1}$ such that $z\in I$. Let $J$ be the unique element of $\mathscr{E}_m$ such that $I\subset J$.
Then there exists $w\in A^*(r^m)$ such that $I\in \mathscr{E}_{m+1}(J)$ and $f_{m}(\partial J) \subset K_w$. Since $f_{m+1}(\partial I) \subset K_{u}$ for some $u \in A^*_w(r^{m+1})$, we have that $f_{m+1}(\partial I)$. Let $y_1\in \partial I$ and $y_2\in \partial J$. We have
\begin{align*}
|f_m(z)&-f_{m+1}(z)|_{\infty} \\ &\leq |f_m(z) - f_m(y_2)|_{\infty} + |f_m(y_2)-f_{m+1}(y_1)|_{\infty} + |f_{m+1}(y_1) - f_{m+1}(z)|_{\infty}\\
&\leq 3\diam{K_w}<3 r^m. \qedhere
\end{align*}
\end{proof}

\subsection{H\"older parameterization (Proof of Theorem \ref{thm:main})}

The proof of Theorem \ref{thm:main} is modeled after the proof of \cite[Theorem 2.3]{BV}, which gave a criterion for the set of leaves of a ``tree of sets" in Euclidean space to be contained in a H\"older curve. Here we view the attractor $K_\mathcal{F}$ as the set of leaves of a tree, whose edges are H\"older curves.

\begin{proof}[Proof of Theorem \ref{thm:main}]

Rescaling the metric $d$, we may assume for the rest of the proof that $\diam{K} = 1$. Fix $q\in K$, and for each $w\in A^*$, set $q_w := \phi_w(q)$ with the convention $q_{\epsilon} = q$. Fix $\a>s=\sdim{\mathcal{F}}$ and fix $L_1\lesssim r \leq L_1$ (once again ensuring that $r^{m+1}\leq L_1r^m\leq L_w$ for all $w\in A^*(r^m)$).
By Lemma \ref{6}, for every integer $m\geq 0$, the set $A(r^m)$ has fewer than $L_1^{-s}r^{-ms}$ words, and moreover, for every $w\in A^*(r^m)$, the set $A_w^*(r^{m+1})$ has at least $1$ and fewer than $L_1^{-s}r^{-s}$ words. Since $r\simeq L_1$,
\begin{equation}\begin{split}\label{eq:3}
\sum_{m=0}^{\infty}&\sum_{w\in A^*(r^m)} \sum_{u \in A^*_w(r^{m+1})} d(q_w,q_u)^{\a} \leq  \sum_{m=0}^{\infty}\sum_{w\in A^*(r^m)} \sum_{u \in A^*_w(r^{m+1})} L_w^{\a} < L_1^{-s}r^{-s} \sum_{m=0}^{\infty}\sum_{w\in A^*(r^m)} L_w^{\a} \\ &\quad< L_1^{-s}r^{-s} \sum_{m=0}^{\infty}\sum_{w\in A^*(r^m)} r^{\a m}
\leq L_1^{-2s}r^{-s} \sum_{m=0}^{\infty} r^{(\a-s)m}\lesssim_{L_1,s,\a} 1.
\end{split}\end{equation}

Below we call the elements of $A_w^*(r^{m+1})$ the \emph{children} of $w\in A^*(r^m)$, and we call $w$ their \emph{parent}; if $u\in A^*_w(r^{m+1})$, then we write $w =: p(u)$.
For each $w\in A^*(r^m)$ and $u\in A^*_w(r^{m+1})$, let $f_{w,u} : [0,L_w^s] \to K_w$ be the $(1/s)$-H\"older map with $f_{w,u}(0) = q_w$ and $f_{w,u}(L_w^s) = q_u$ given by Proposition \ref{lem:Holderconn}. Let also $\gamma_{w,u}$ be the image of $f_{w,u}$. We can write $K$ as the closure of the set
\[\G_{\circ} := \bigcup_{m=0}^{\infty}\bigcup_{w\in A^*(r^m)}\bigcup_{u\in A_w^*(r^{m+1})}\g_{w,u}.\]
For each integer $m\geq 0$ and $w\in A^*(r^m)$ define
\[ M_{w} := 2\sum_{j=m+1}^{\infty} \sum_{u \in A^*_w(r^{j})}
L_{p(u)}^{\a} \lesssim_{L_1,s,\a} r^{m\a},\]
where we sum over all descendants of $w$. Setting $M:= M_{\epsilon}$, by (\ref{eq:3}), we have that $M\lesssim_{L_1,s,\a} 1$. We will construct a $(1/\a)$-H\"older continuous surjective map $F: [0,M] \to K$ by defining a sequence $F_m : [0,M] \to K$ ($m\in\N$) whose limit is $F$ and whose image is the truncated tree
\[ \G_m := \bigcup_{i=0}^{m-1}\bigcup_{w\in A^*(r^{i})}\bigcup_{u\in A_w^*(r^{i+1})}\g_{w,u}.\]

\begin{lem}\label{lem:tree-param}
For each $m\in\N$, there exist two collections $\mathscr{B}_m$, $\mathscr{N}_m$ of nondegenerate closed intervals in $[0,1]$, a bijection $\eta_m : \mathscr{N}_m \to A^*(r^m)$, and a map $F_m:[0,M] \to \G_m$ with the following properties.
\begin{enumerate}
\item[{(P1)}] The families $\mathscr{N}_m$ and $\mathscr{B}_m$ are disjoint, the elements in $\mathscr{N}_m\cup\mathscr{B}_m$ have mutually disjoint interiors, and $\bigcup(\mathscr{N}_m\cup \mathscr{B}_m) = [0,M]$. Moreover, $F_m([0,M]) = \G_m$.
\item[{(P2)}] If $I\in \mathscr{N}_{m+1}$, then there is $J\in\mathscr{N}_m$ such that $I\subset J$ and $\eta_{m+1}(I) \in A^*_{\eta_m(J)}(r^{m+1})$. Conversely, if $J\in\mathscr{N}_n$, then there exist $J_1\in \mathscr{N}_{m+1}$ and $J_2\in\mathscr{B}_{m+1}$ such that $J_1\subset I$ and $J_2\subset I$ and $\card\{ I \in \mathscr{B}_{m+1}\cup\mathscr{N}_{m+1} : I\subset J\} \leq L_1^{-s}r^s$.
\item[{(P3)}] If $I\in\mathscr{B}_{m+1}$, then either $I \in \mathscr{B}_{m}$ or there exists $J\in \mathscr{N}_m$ such that $I\subset J$. Conversely, $\mathscr{B}_m\subset\mathscr{B}_{m+1}$.
\item[{(P4)}] For each $I \in \mathscr{N}_m$, $|I| = M_{\eta_m(I)}$, $F_m|I$ is constant and equal to $q_{\eta(I)}$ and $F_{m+1}|\partial I = F_m|\partial I$.
\item[{(P5)}] For each $I\in \mathscr{B}_m$, there exists $w\in A^*(r^{m-1})$ and $u\in A^*_w(r^m)$ such that $|I| = L_w^{\a}$ and $F_m|I = f_{w,u}\circ \psi_I$ where $\psi_I$ is $(s/\a)$-H\"older with $\Hold_{s/\a}{\psi_I} =1$. Conversely, for any  $w\in A^*(r^{m-1})$ and $u\in A^*_w(r^m)$ there exists $I\in \mathscr{B}_m$ as above. Finally, $F_{m+1}|I = F_m|I$ for all $I\in\mathscr{B}_m$.
\end{enumerate}
\end{lem}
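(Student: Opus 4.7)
The plan is a direct induction on $m$, driven by the identity
\[
  M_w \;=\; \sum_{u \in A^*_w(r^{m+1})} \bigl( 2 L_w^\alpha + M_u \bigr), \qquad w \in A^*(r^m),
\]
which is obtained by peeling the $j = m+1$ slice off the series defining $M_w$ and then using Lemma \ref{lem:partition} to regroup the terms for $j \geq m+2$ by their unique ancestor in $A^*_w(r^{m+1})$. The identity says that the budget $M_w$ allotted to the subtree rooted at $w$ is exactly what is needed for an Euler-tour style traversal: for each child $u$ of $w$, one spends $L_w^\alpha$ to travel from $q_w$ to $q_u$ along $\gamma_{w,u}$, then $M_u$ to parameterize the subtree rooted at $u$, and then another $L_w^\alpha$ to return to $q_w$.

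For the base case I set $\mathscr{N}_0 = \{[0,M]\}$, $\eta_0([0,M]) = \epsilon$, $\mathscr{B}_0 = \emptyset$, and $F_0 \equiv q$; all properties hold trivially, and $|[0,M]| = M_\epsilon$. For the inductive step I process each $J \in \mathscr{N}_m$ separately. Let $w = \eta_m(J)$, enumerate the children $u_1, \ldots, u_l$ of $w$ in $A^*_w(r^{m+1})$ in some fixed order, and split $J$ into $3l$ consecutive closed subintervals with pairwise disjoint interiors,
\[
  I_1^{\rightarrow},\; J_1,\; I_1^{\leftarrow},\; I_2^{\rightarrow},\; J_2,\; I_2^{\leftarrow},\; \ldots,\; I_l^{\rightarrow},\; J_l,\; I_l^{\leftarrow},
\]
with $|I_i^{\rightarrow}| = |I_i^{\leftarrow}| = L_w^\alpha$ and $|J_i| = M_{u_i}$; the displayed identity guarantees these add up to $|J| = M_w$. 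I put each $I_i^{\rightarrow}, I_i^{\leftarrow}$ into $\mathscr{B}_{m+1}$ and each $J_i$ into $\mathscr{N}_{m+1}$ with $\eta_{m+1}(J_i) := u_i$, and I copy every interval of $\mathscr{B}_m$ into $\mathscr{B}_{m+1}$ unchanged with $F_{m+1}|I := F_m|I$.

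On the new pieces I set $F_{m+1} \equiv q_{u_i}$ on each $J_i$; on $I_i^{\rightarrow} = [a, a + L_w^\alpha]$ I set $F_{m+1}(t) = f_{w,u_i}\bigl((t-a)^{s/\alpha}\bigr)$; and on $I_i^{\leftarrow} = [b, b + L_w^\alpha]$ I set $F_{m+1}(t) = f_{w,u_i}\bigl((b + L_w^\alpha - t)^{s/\alpha}\bigr)$. Since $s/\alpha < 1$, the map $\tau \mapsto \tau^{s/\alpha}$ is subadditive, so both reparameterizations are $(s/\alpha)$-H\"older with constant exactly $1$, giving P5. Continuity of $F_{m+1}$ at the internal breakpoints in $J$ is visible from the value sequence $q_w, q_{u_1}, q_{u_1}, q_w, q_{u_2}, \ldots, q_w$; in particular, $F_{m+1}|\partial J \equiv q_w = F_m|\partial J$, which gives P4. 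Property P1 is built into the partition, and the image identity $F_m([0,M]) = \Gamma_m$ follows by noting that every node value $q_w$ with $w \in A^*(r^m)$ appears as an endpoint of the arc $\gamma_{p(w), w}$ already included in $\Gamma_m$. Property P3 is automatic because $\mathscr{B}_m$ is preserved under refinement. Finally, the cardinality bound in P2 reduces to the bound on the number $l$ of children of $w$ supplied by Lemma \ref{6}.

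The only real content is the displayed identity for $M_w$; once it is in hand, the construction and the verification of P1--P5 are essentially bookkeeping. I do not anticipate a genuine obstacle at this stage: all H\"older analysis---in particular, the uniform convergence of $F_m$ to a $(1/\alpha)$-H\"older limit $F$ via a Lemma \ref{l:LipHold}-style argument---is deferred to the surrounding argument after the lemma.
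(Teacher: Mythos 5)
Your proposal is correct and matches the paper's construction step for step: the same triple decomposition $I^{\rightarrow},\,J,\,I^{\leftarrow}$ (there $I_i,\,J_i,\,I_i'$) of each $\mathscr{N}_m$-interval, the same constant/$(s/\alpha)$-H\"older reparameterization assignments, and the same appeal to Lemma \ref{6} for the cardinality bound in (P2). The only cosmetic differences are your extra trivial $m=0$ base level (the paper runs the same construction directly at $m=1$) and your welcome decision to state and verify explicitly the telescoping identity $M_w = \sum_{u\in A^*_w(r^{m+1})}\bigl(2L_w^\alpha + M_u\bigr)$, which the paper uses only implicitly when asserting that the chosen subinterval lengths fill $|I|=M_w$.
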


We now complete the proof of Theorem \ref{thm:main}, assuming Lemma \ref{lem:tree-param}. Let $\mathscr{B}_m$, $\mathscr{N}_m$, $\eta_m$ and $F_m$ be as in Lemma \ref{lem:tree-param}. Notice by (P2) that if $I\in\mathscr{N}_m$, then for all $F_n(I) \subset K_{\eta_m(I)}$. We claim that
\begin{equation}\label{eq:1'}
|F_m(x) - F_{m+1}(x)|_{\infty} \leq 2r^m.
\end{equation}
Equation (\ref{eq:1'}) is clear by (P5) if $x\in \mathscr{B}_m$. If $x\in \mathscr{N}_m$, then by (P2) and (P4) there exists $w\in A^*(r^m)$ such that $F_m(I)$ is an element of $K_w$ and $F_m(I) \subset F_{m+1}(I) \subset K_w$. Therefore,
\[ |F_m(x) - F_{m+1}(x)|_{\infty} \leq 2 \diam{K_w} < 2r^m.\]

We now claim that for all $m\in\N$ and all $x,y\in[0,1]$,
\begin{equation}\label{eq:2'}
|F_m(x) - F_{m}(y)|_{\infty} \lesssim_{L_1,s,\a} r^{m(1-\a/s)}|x-y|^{1/s}.
\end{equation}
To prove (\ref{eq:2'}) fix $x,y \in [0,M]$ and consider the following cases.

\emph{Case 1.} Suppose that there exists $I\in \mathscr{B}_m\cup\mathscr{N}_m$ such that $x,y \in I$. If $I\in\mathscr{N}_m$, (\ref{eq:2'}) is immediate since $F_m|I$ is constant. If $I\in\mathscr{B}_m$, then by (P5)
\[ |F_m(x) - F_{m}(y)|_{\infty} \lesssim_{L_1,s} \frac{\diam{f_m(I)}}{|I|^{1/s}}|x-y|^{1/s} = r^{m(1-\a/s)}|x-y|^{1/s}.\]

\emph{Case 2.} Suppose that there exist $I_1,I_2\in \mathscr{B}_m\cup\mathscr{N}_m$ such that $I_1\cap I_2$ is a single point $\{z\}$, $x\in I_1$ and $y\in I_2$. Then, by triangle inequality and Case 1,
\[ |F_m(x) - F_{m}(y)|_{\infty} \leq  |F_m(x) - F_{m}(z)|_{\infty} + |F_m(z) - F_{m}(y)|_{\infty} \lesssim_{L_1,s} 2r^{m(1-\a/s)}|x-y|^{1/s}.\]

\emph{Case 3.} Suppose that Case 1 and Case 2 do not hold. Let $m_0$ be the smallest positive integer $m$ such that there exists $I\in \mathscr{B}_m \cup\mathscr{N}_m$ with $x\leq z\leq y$ for all $z\in I$. In particular, suppose that
\[ a_1 \leq x \leq a_2 < a_3 < \cdots < a_n \leq y < a_{n+1},\]
where $[a_i,a_{i+1}] \in \mathscr{B}_{m_0}\cup\mathscr{N}_{m_0}$ for all $i\in\{1,\dots,n\}$. By minimality of $m_0$ and (P2), $n \leq 2L_1^{-s}r^{-s}$. By (P4) and (P5), $|a_i-a_{i+1}| \gtrsim_{L_1,s,\a} r^{\a m_0}$ and $F_m(a_i) = F_{m_0}(a_i)$ for all $i$. Furthermore, by (P2), (P3) and (P5) we have
\[ \max\{|F_m(x)-F_m(a_2)|_{\infty}, |F_m(y)-F_m(a_n)|_{\infty}\} \leq r^{m_0}.\]
Therefore, by Case 1 and the triangle inequality,
\begin{align*}
|F_m(x) &- F_m(y)|_{\infty} \\ &\leq |F_m(x)-F_m(a_2)|_{\infty} + \sum_{i=2}^{n-1} |F_m(a_i)-F_m(a_{i+1})|_{\infty} +  |F_m(y)-F_m(a_n)|_{\infty}\\
&\lesssim_{L_1,s} 2 r^{m_0} + r^{m_0(1-\a/s)}\sum_{i=2}^{n-1} |a_i-a_{i+1}|^{1/s} \\
&\lesssim_{L_1,s} r^{m_0(1-\a/s)}\sum_{i=2}^{n-1} |a_i-a_{i+1}|^{1/s}\\&\lesssim_{L_1,s}r^{m_0(1-\a/s)}\left(\sum_{i=2}^{n-1} |a_i-a_{i+1}|\right)^{1/s}\leq r^{m_0(1-\a/s)}|x-y|^{1/s}.
\end{align*}

By (\ref{eq:1'}), (\ref{eq:2'}) and Lemma \ref{l:LipHold}, we have that $F_m$ converges pointwise to a $(1/\a)$-H\"older continuous $F:[0,M] \to K$ with $\Hold_{1/\a}(F)\lesssim_{L_1,s,\a,M,r} 1\simeq_{L_1,s,\a}1$. By (P1), we have that $F([0,M]) \subset K$ and that $\bigcup_{m\in\N}\G_m \subset F([0,1])$. Therefore, $F([0,M]) = K$. This completes the proof of Theorem \ref{thm:main}, assuming Lemma \ref{lem:tree-param}.
\end{proof}

\begin{proof}[{Proof of Lemma \ref{lem:tree-param}}]
We give the construction of $\mathscr{B}_m$, $\mathscr{N}_m$, $\eta_m$ and $F_m$ in an inductive manner.

Suppose that $A^*(r) = \{w_1,\dots,w_n\}$. Decompose $[0,M]$ as
\[ [0,M] = I_1\cup J_1\cup I_1'\cup \cdots \cup I_n\cup J_n \cup I_n',\]
a union of closed intervals with mutually disjoint interiors, enumerated according to the orientation of $[0,M]$ such that $|I_j| = |I_j'| = 1$ and $|J_j| = M_{w_j}$. Set $\mathscr{B}_1 = \{I_1,I_1',\dots,I_n,I_n'\}$, $\mathscr{N}_1 = \{J_1,\dots,J_n\}$ and $\eta_1(J_j) = w_j$.

We now define $F_1: [0,M] \to \G_1$ as follows. For each $J_i\in \mathscr{N}_1$ let $F_{1}|J_i \equiv q_{w_i}$. For each $i \in \{1,\dots,n\}$, let $\psi_i: I_i \to [0,1]$ (resp. $\psi_i': I_i' \to [0,1]$) be a $(s/\a)$-H\"older orientation preserving (resp. orientation reversing) homeomorphism with $\Hold_{s/\a}{\psi_i} =1$ (resp. $\Hold_{s/\a}{\psi_i'} =1$). Define now $F_1|I_i = f_{\epsilon,w_i} \circ \psi_i$ and $F_1|I_i' = f_{\epsilon,w_i} \circ \psi_i'$. The properties (P1)--(P5) are easy to check.

Suppose now that for some $m\geq 1$, we have constructed $\mathscr{B}_m$, $\mathscr{N}_m$, $\eta_m$ and $F_m$ satisfying (P1)--(P5). For each $I\in \mathscr{B}_m$ define $F_{m+1}|I = F_m|I$. For each $I\in \mathscr{N}_m$ we construct families $\mathscr{B}_{m+1}(I)$ and $\mathscr{N}_{m+1}(I)$ and then we set
\begin{align*}
\mathscr{B}_{m+1} = \mathscr{B}_{m}\cup\bigcup_{I \in \mathscr{N}_m}\mathscr{B}_{m+1}(I),\qquad
\mathscr{N}_{m+1} = \bigcup_{I \in \mathscr{N}_m}\mathscr{N}_{m+1}(I).
\end{align*}
In the process we also define $F_{m+1}$ and $\eta_m$.

Suppose that $I\in \mathscr{N}_m$ and write $I=[a,b]$. By the inductive hypothesis (P3), there exists $w\in A^*(r^m)$ such that $F_m(I) = q_w$. Suppose that $A^*_w(r) = \{w_1,\dots,w_n\}$. Decompose $I$ as
\[ I =  I_1\cup J_1\cup I_1'\cup \cdots \cup I_l\cup J_l \cup I_l',\] a union
of closed intervals with mutually disjoint interiors, enumerated according to the orientation of $I$ such that $|I_j| = L_{w}^{\a}$ and $|J_j| = M_{w_j}$. Set $\mathscr{B}_{m+1}(I) = \{I_1,I_1',\dots,I_l,I_l'\}$, $\mathscr{N}_{m+1}(I) = \{J_1,\dots,J_l\}$ and $\eta_{m+1}|A^*_w(r^{m+1})(J_i) = w_i$.

For each $J_i\in \mathscr{N}_{m+1}(I)$ let $F_{m+1}|J_i \equiv q_{w_i}$. For each $i \in \{1,\dots,l\}$, let $\psi_i: I_i \to [0,L_w^s]$ (resp. $\psi_i': I_i' \to [0,L_w^s]$) be a $(s/\a)$-H\"older orientation preserving (resp. orientation reversing) homeomorphism with $\Hold_{s/\a}{\psi_i} =1$ (resp. $\Hold_{s/\a}{\psi_i'} =1$). Define now $F_{m+1}|I_i = f_{w,w_i} \circ \psi_i$ and $F_1|I_i' = f_{w,w_i} \circ \psi_i'$. The properties (P1)--(P5) are easy to check and are left to the reader.
\end{proof}

\section{H\"older parameterization of IFS without branching by arcs}\label{IFSsnowflakes}

On the way to the proof of Theorem \ref{thm:remes} (see \S\ref{sec:proof}), we first parameterize IFS attractors without branching by $(1/s)$-H\"older arcs (see \S\ref{sec:nobranching}), where $s$ is the similarity dimension. We then discuss how under the assumption of bounded turning, self-similar sets without branching are $(1/s)$-bi-H\"older arcs (see \S\ref{sec:bt}).
Finally, we give a family of examples of self-affine snowflake curves in the plane, for which the H\"older exponents in Theorem \ref{thm:connect} and Proposition \ref{prop:IFSarc} are sharp and may exceed 2 (see \S\ref{sec:sharpsnow}).

\subsection{IFS without branching} \label{sec:nobranching}

Given an IFS $\mathcal{F} = \{\phi_i : i \in A\}$ over a complete metric space, we say that $\mathcal{F}$ has \emph{no branching} or is \emph{without branching} if for every $m\in\N$ and word $w\in A^m$ (see \S\ref{sec:words}), there exist at most two words $u\in A^m\setminus\{w\}$ such that $\phi_w(K_\mathcal{F})\cap \phi_u(K_\mathcal{F}) \neq \emptyset$.

\begin{prop}[parameterization of connected IFS without branching]\label{prop:IFSarc} Let $\mathcal{F}$ be an IFS over a complete metric space; let $s=\sdim(\mathcal{F})$.
If $K_\mathcal{F}$ is connected, $\diam{K_{\mathcal{F}}}>0$, and $\mathcal{F}$ has no branching, then there exists a $(1/s)$-H\"older homeomorphism $f:[0,1] \to K$ with $\Hold_{1/s}{f} \lesssim_{L_1,s} \diam{K}$, where $L_1=\min_{\phi\in\mathcal{F}} \Lip\phi$.
\end{prop}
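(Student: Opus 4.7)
The strategy is to adapt the construction from the proof of Proposition \ref{lem:Holderconn}, but to use the no-branching hypothesis to produce a coherent linear ordering of \emph{all} cylinders at every scale. This, together with the resulting disjointness of non-adjacent cylinders, will upgrade the $(1/s)$-H\"older map produced by the approximation procedure to a $(1/s)$-H\"older homeomorphism of $[0,1]$ onto $K=K_{\mathcal{F}}$. By Lemma \ref{lem:connIFS}, we may assume that every $\phi_i$ has $\Lip\phi_i > 0$, so the notation and lemmas of \S\ref{sec:words} are available. After rescaling the metric so that $\diam K = 1$ and embedding $K$ into $\ell_\infty$, I would fix a scale parameter $r$ with $L_1 \lesssim r \leq L_1$ (so that $r^{m+1} \leq L_1 r^m \leq L_w$ for every $w \in A^*(r^m)$) and work with the discrete scales $A^*(r^m)$.

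The first step is a combinatorial one. Using Lemma \ref{lem:conn}, I enumerate the cylinders $\{K_w : w \in A^*(r^m)\}$ at each scale as a finite chain $K_{w^m_1}, K_{w^m_2}, \ldots, K_{w^m_{N_m}}$ with $K_{w^m_i} \cap K_{w^m_{i+1}} \neq \emptyset$. An induction on scale shows that the no-branching hypothesis forces $K_{w^m_i} \cap K_{w^m_j} = \emptyset$ whenever $|i-j| \geq 2$, and that for each $1 \leq i \leq N_m$ the children $A^*_{w^m_i}(r^{m+1})$ can themselves be arranged as a sub-chain whose first and last cylinders respectively contain the contact points with $K_{w^m_{i-1}}$ and $K_{w^m_{i+1}}$. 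Concatenating these sub-chains yields an enumeration of $A^*(r^{m+1})$ compatible with the enumeration at scale $m$. Fix points $q^m_i \in K_{w^m_i} \cap K_{w^m_{i+1}}$ coherently across scales, and then, using $\sum_{w \in A^*(r^m)} L^s_w = 1$ and $\sum_{u \in A^*_w(r^{m+1})} L^s_u = L^s_w$ from Lemma \ref{l:sum-Ls}, partition $[0,1]$ into closed intervals $\{I^m_w : w \in A^*(r^m)\}$ with mutually disjoint interiors, $|I^m_w| = L^s_w$, ordered according to the chain, and with the level-$(m+1)$ partition refining the level-$m$ partition.

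Next, define $f_m : [0,1] \to \ell_\infty$ to be affine on each $I^m_w$ and to send the common endpoint of $I^m_{w^m_i}$ and $I^m_{w^m_{i+1}}$ to $q^m_i$, with $f_m(0) \in K_{w^m_1}$ and $f_m(1) \in K_{w^m_{N_m}}$ held fixed as in the proof of Proposition \ref{lem:Holderconn}. Exactly the same estimates as there give $\Lip f_m \lesssim_{L_1,s} r^{m(1-s)}$ and $\|f_m - f_{m+1}\|_\infty \leq 3 r^m$, so Lemma \ref{l:LipHold} yields a uniform limit $f : [0,1] \to \ell_\infty$ that is $(1/s)$-H\"older with $\Hold_{1/s} f \lesssim_{L_1,s} 1$. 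Since $f_m([0,1])$ is $\lesssim r^m$-dense in $K$ and $f(\overline{I^m_w}) \subset K_w$, the image $f([0,1])$ is closed, dense in $K$, and contained in $K$, hence equals $K$. For injectivity, given $x \neq y$ in $[0,1]$, choose $m$ large enough that $x$ and $y$ lie in \emph{interiors} of intervals $I^m_{w^m_i}, I^m_{w^m_j}$ with $|i-j| \geq 2$; then $f(x) \in K_{w^m_i}$, $f(y) \in K_{w^m_j}$, and these cylinders are disjoint by the above induction. The endpoints $\partial I^m_{w^m_i} \cap \partial I^m_{w^m_{i+1}}$ map to $q^m_i$ and no branching prevents a second preimage. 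Therefore $f$ is a homeomorphism, and undoing the initial rescaling gives $\Hold_{1/s} f \lesssim_{L_1,s} \diam K$.

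The main obstacle is the propagation of the enumeration across scales in the first step: Lemma \ref{lem:conn} only produces chains at a single level, so the refinement from scale $m$ to scale $m+1$ must be performed so that the children of each $w^m_i$ begin and end at the designated contact points $q^m_{i-1}, q^m_i$. This requires showing that these contact points lie in uniquely determined ``endpoint'' children of $w^m_i$, and it is precisely the no-branching hypothesis that rules out a cylinder with three or more neighbors at some scale, which would obstruct any linear ordering and thus prevent an injective parameterization.
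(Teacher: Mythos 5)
Your plan is essentially a correct reconstruction of the paper's argument, but it takes a noticeably more self-contained route. The paper first proves three structural lemmas (that each $G_n$ is a combinatorial arc, that same-scale cylinders meet in at most a point, and that there are exactly two ``endpoint'' cylinders at each generation), uses them to identify the two distinguished points $v_0 = \bigcap_n K_{w_0(n)}$ and $v_1 = \bigcap_n K_{w_1(n)}$, and then simply \emph{invokes} Proposition~\ref{lem:Holderconn} with $x = v_0$, $y = v_1$ as a black box. Surjectivity and injectivity are then verified a posteriori, the former via a disconnection argument (removing an interior cylinder $K_u$ at level $A^N$ separates $K$ into two pieces, one containing $v_0$ and the other $v_1$, so the connected image $f([0,1])$ must pass through $K_u$). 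You instead re-run the piecewise-linear approximation from scratch, modifying it so that at every scale the path traverses \emph{all} of $A^*(r^m)$ rather than only a sub-chain joining prescribed endpoints, and then obtain surjectivity by a density argument. Both routes work; the paper's is more modular (it reuses Proposition~\ref{lem:Holderconn} verbatim), while yours makes the ``full-arc'' structure more explicit and gives a slightly different surjectivity proof.

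Two points in your plan deserve more care. First, the ``induction on scale'' that the coherent ordering of $A^*(r^m)$ exists and propagates to $A^*(r^{m+1})$ is exactly the content of Lemma~\ref{lem:arc} in the paper, and it is not entirely routine: the no-branching hypothesis is stated for the fixed-length sets $A^n$, whereas $A^*(r^m)$ mixes words of different lengths, so one must pass to a common refinement level (the paper does this via Lemma~\ref{lem:partition} by choosing $N = \max\{|w| : w \in A^*(r^m)\}$). You should spell out this reduction. Second, in the injectivity step you ask to choose $m$ so that $x$ and $y$ both lie in \emph{interiors} of non-adjacent intervals; this fails when one of them is a persistent common endpoint of two adjacent intervals at every scale. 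You flag this case but only briefly — it needs the observation that such a point is sent to a contact point $q^m_i \in K_{w^m_i}\cap K_{w^m_{i+1}}$, which (by the non-branching/arc structure, so $K_{w^m_{i-1}}\cap K_{w^m_{i+1}} = \emptyset$) cannot equal $f(y)$ for $y$ in a non-adjacent interval nor equal a different contact point. With those two items filled in, the proposal is sound.
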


For the rest of \S\ref{sec:nobranching}, fix an IFS $\mathcal{F}=\{\phi_1,\dots,\phi_k\}$ over a complete metric space $(X,d)$ whose attractor $K:=K_\mathcal{F}$ is connected and has positive diameter. Adopt the notation and conventions set in the first paragraph of \S\ref{sec:Holdcon} as well as in \S\ref{sec:words}. In addition, assume that $\mathcal{F}$ has no branching. Since $\diam K>0$, $k\geq 2$. Replacing $\mathcal{F}$ with the iterated IFS $\mathcal{F}' = \{\phi_{w} : w\in A^2\}$ if needed, we may assume without loss generality that $k\geq 4$. Finally, rescaling the metric $d$, we may assume without loss of generality that $\diam K=1$.

Given $n\in\N$, we denote by $G_n$ the graph with vertices the set $A^n$ and (undirected) edges $\{\{w,u\} : w\neq u\text{ and }K_{w}\cap K_u \neq \emptyset\}$. For each $n\in\N$ and $w\in A^n$, the \emph{valence} $\val(u,G_n)$ of $w$ in $G_n$ is the number of all edges of $G_n$ containing $w$.

\begin{lem}\label{lem:arc}
Each $G_n$ is a combinatorial arc. Moreover, there exist exactly two distinct $i_0,j_0 \in A$ such that for any $n\in\N$ the following properties hold.
\begin{enumerate}
\item If $w$ has valence 1 in $G_n$, then there exists unique $i\in \{i_0,j_0\}$ such that $wi$ has valence 1 in $G_{n+1}$.
\item If $\{w,u\}$ is an edge of $G_n$, then there exist unique $i,j\in \{i_0,j_0\}$ such that $\{wi,uj\}$ is an edge in $G_{n+1}$.
\end{enumerate}
\end{lem}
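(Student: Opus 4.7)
The plan is as follows. I would first record two basic structural facts. By the no-branching hypothesis, every vertex of $G_n$ has valence at most $2$. The graph $G_n$ is also connected: any disjoint partition $A^n=A_1\sqcup A_2$ with no edge between the parts in $G_n$ would produce disjoint closed sets $\bigcup_{w\in A_1}K_w$ and $\bigcup_{w\in A_2}K_w$ whose union is the connected set $K$, a contradiction. A connected graph of maximum degree $2$ is either a combinatorial arc or a cycle, so only the cycle case has to be ruled out.

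The key tool, which I would use repeatedly, is a subgraph-saturation principle. For every $w\in A^*$ and every pair $j,j'\in A$ with $K_j\cap K_{j'}\neq\emptyset$, the inclusion $K_{wj}\cap K_{wj'}\supseteq\phi_w(K_j\cap K_{j'})\neq\emptyset$ shows that the subgraph $H_w$ of $G_{|w|+1}$ induced on $\{wi:i\in A\}$ contains an edge-wise copy of $G_1$ under the relabeling $j\leftrightarrow wj$. I would use this first to rule out the cycle case for $G_1$: if $G_1$ were a cycle, then every $wj$ would already have valence $2$ inside $H_w$, forcing $H_w$ to equal that cycle and leaving no room for external edges. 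But $K_w$ meets a neighbor $K_{w'}$ in the cycle $G_1$, which forces some $K_{wj}\cap K_{w'j'}\neq\emptyset$ and hence an external edge out of $wj$ in $G_2$, a contradiction. So $G_1$ is an arc; since it has $k\geq 4$ vertices, it has two distinct endpoints, which I would declare to be $i_0$ and $j_0$.

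Next I would induct on $n$ to show that $G_n$ is an arc and that (1) and (2) hold. Assume the statement at level $n$. For each $w\in A^n$, the subgraph $H_w$ of $G_{n+1}$ on $\{wi:i\in A\}$ contains an edge-wise copy of the arc $G_1$ and inherits the max-valence-$2$ constraint from $G_{n+1}$. The only edge that can be added to the arc $G_1$ without creating a vertex of degree $3$ is $\{i_0,j_0\}$. If that extra edge were present, $H_w$ would be a cycle in which every $wi$ is saturated at valence $2$, forbidding any external edges; but $K_w$ has at least one neighbor $K_{w'}$ in the arc $G_n$, and $K_w\cap K_{w'}\neq\emptyset$ forces at least one external edge from some $wi$, a contradiction. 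Hence $H_w$ is precisely an arc with endpoints $wi_0$ and $wj_0$, and the only vertices of $H_w$ with a free degree slot are $wi_0$ and $wj_0$.

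The verification of (1), (2), and the arc structure of $G_{n+1}$ is then a bookkeeping on valences. For each edge $\{w,w'\}$ of $G_n$, the intersection $K_w\cap K_{w'}$ descends to some intersection $K_{wi}\cap K_{w'j}$, and $wi,w'j$ must be external-connecting vertices of $H_w,H_{w'}$, forcing $i,j\in\{i_0,j_0\}$; uniqueness of the pair $(i,j)$ follows because each of $wi_0,wj_0$ has only one free degree slot, so if $w$ has two neighbors in $G_n$, the two external edges out of $K_w$ must split between $wi_0$ and $wj_0$. Similarly, if $w$ is a valence-$1$ endpoint of $G_n$, exactly one of $wi_0,wj_0$ is used for the external edge and the other has valence $1$ in $G_{n+1}$, which gives (1) and supplies the two endpoints of $G_{n+1}$; since $G_{n+1}$ inherits exactly two valence-$1$ vertices, one above each endpoint of $G_n$, it is an arc, closing the induction. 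The main technical point I expect to require care is the possibility that $\phi_w$ need not be injective, so a priori $H_w$ could have extra adjacencies not coming from $G_1$; the max-valence-$2$ condition together with the requirement that $K_w$ connect externally in $G_{n+1}$ is precisely what eliminates this.
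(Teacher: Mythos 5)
Your proof is correct and takes essentially the same route as the paper: rule out the cycle case for $G_1$ by observing that a block $\{wi : i\in A\}$ already saturates valence 2 at some vertex and would leave no room for the external edge forced by connectedness, then run the induction treating each block $H_w$ the same way. The paper states the cycle-exclusion for $G_1$ in the same spirit and dismisses the induction in one line as "simple"; you make the saturation principle explicit, flag the potential non-injectivity of $\phi_w$, and carry out the valence bookkeeping that the paper leaves to the reader, which is a welcome level of detail but not a different argument.
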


\begin{proof} Because $\mathcal{F}$ has no branching, $\val(i,G_1)\in\{1,2\}$ for all $i\in A^1$. Therefore, either $G_1$ is a combinatorial circle or $G_1$ is a combinatorial arc. If $G_1$ is  a combinatorial circle and $\{1,i\}$ is any edge in $G_1$, then there exists $i_1,i_2,i_3,j_1 \in A$ such that $\{1i_1,1i_2\}$, $\{1i_2,1i_3\}$, and $\{1i_2,ij_1\}$ are edges in $G_2$; this implies $\val(1i_2,G_2) \geq 3$ and we reach a contradiction. Thus, in fact, $G_1$ is a combinatorial arc. In particular, there exist exactly two words in $A$ whose valence in $G_1$ is 1, say $i_0$ and $j_0$. The rest of the proof follows from a simple induction, which we leave the reader.
\end{proof}

From Lemma \ref{lem:arc}, we obtain two simple corollaries.

\begin{lem}\label{lem:intersection}
For all $n\in\N$ and all $w,u \in A^n$, $K_{w}\cap K_{u}$ is at most a point.
\end{lem}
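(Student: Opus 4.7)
The plan is to split into cases by whether $\{w,u\}$ is an edge of $G_n$. If not, then by definition $K_w\cap K_u=\emptyset$ and the conclusion is trivial. So assume $\{w,u\}$ is an edge of $G_n$. The strategy is to show that $K_w\cap K_u$ equals the intersection of a single distinguished pair of child cylinders $K_{w_{n+1}}\cap K_{u_{n+1}}$ at level $n+1$, so that iterating this identity shrinks the intersection in diameter to zero.

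Fix an edge $\{w,u\}$ of $G_n$. Writing $K_w=\bigcup_{i\in A}K_{wi}$ and $K_u=\bigcup_{j\in A}K_{uj}$, we obtain
\[K_w\cap K_u=\bigcup_{i,j\in A}\bigl(K_{wi}\cap K_{uj}\bigr),\]
and since $w\neq u$ forces $wi\neq uj$ for all $i,j\in A$, each nonempty summand corresponds to an edge $\{wi,uj\}$ of $G_{n+1}$. The key step is to argue that the unique pair $(i_*,j_*)\in\{i_0,j_0\}^2$ provided by Lemma \ref{lem:arc}(2) is the only pair $(i,j)\in A\times A$ for which $\{wi,uj\}$ is an edge of $G_{n+1}$.

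For this, I would observe that the subgraph of $G_{n+1}$ induced on the children $\{wi:i\in A\}$ of $w$ is isomorphic to $G_1$ via $wi\leftrightarrow i$, because $\phi_w$ is injective and hence preserves the intersection pattern of the first-level cylinders inside $K_w$. By Lemma \ref{lem:arc}, $G_1$ is an arc with endpoints $i_0$ and $j_0$, so every child $wi$ with $i\notin\{i_0,j_0\}$ already has valence $2$ within this subgraph. If such a $wi$ were joined by an edge in $G_{n+1}$ to some cylinder $uj$ outside $K_w$, its valence in $G_{n+1}$ would be at least $3$, contradicting the no-branching hypothesis. The same argument applied at $u$ forces $j\in\{i_0,j_0\}$, and Lemma \ref{lem:arc}(2) then singles out $(i_*,j_*)$ as the only admissible pair.

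Consequently $K_w\cap K_u=K_{wi_*}\cap K_{uj_*}$. Setting $w_{n+1}:=wi_*$, $u_{n+1}:=uj_*$ and iterating produces sequences $w_m,u_m\in A^m$ (for $m\geq n$, with $w_n=w$ and $u_n=u$) such that $\{w_m,u_m\}$ is an edge of $G_m$ and $K_w\cap K_u=K_{w_m}\cap K_{u_m}$ for all $m\geq n$. Since $\diam K_{w_m}\leq L_{w_m}\diam K\to 0$ as $m\to\infty$ (each of the $m$ letters contributes a factor strictly less than $1$), the intersection has diameter zero and thus contains at most one point. The only delicate point is the valence argument that restricts the child indices to $\{i_0,j_0\}^2$; the remainder is a routine nested-intersection argument combined with contractivity.
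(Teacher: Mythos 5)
Your proof is correct and follows essentially the same path as the paper: both reduce to showing that exactly one pair of child cylinders $(K_{wi},K_{uj})$ can meet, then iterate and use contractivity of the compositions to force the diameter of $K_w\cap K_u$ to zero. One small caveat: your appeal to ``$\phi_w$ is injective'' is unwarranted in this generality (a contraction with $\Lip\phi<1$ need not be injective, so the induced subgraph on the children of $w$ need only \emph{contain} a copy of $G_1$, not be isomorphic to it); however, the one-sided implication you actually use---$K_i\cap K_{i'}\neq\emptyset$ forces $K_{wi}\cap K_{wi'}\neq\emptyset$---holds regardless, so the valence lower bound and the rest of the argument go through.
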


\begin{proof}
Fix $w,u\in A^n$ such that $K_{w}\cap K_{u}\neq \emptyset$. We first claim that there exists unique $i\in A$ and unique $j\in A$ such that $K_{wi}\cap K_{uj}\neq \emptyset$. Assuming the claim to be true, we have
\[ \diam(K_{w}\cap K_{u}) = \diam(K_{wi}\cap K_{uj}) \leq L_k \diam(K_{w}\cap K_{u}) < \diam(K_{w}\cap K_{u}), \]
which implies that $\diam(K_{w}\cap K_{u}) = 0$.

To prove the claim, fix $i\in A$ such that $K_{wi}\cap K_{u}\neq \emptyset$. By Lemma \ref{lem:arc}, we have that $i\in\{i_0,j_0\}$ where $\{i_0,j_0\}$ are the unique elements of $A$ with valence 1 in $G_1$; say $i=i_0$. If there exists $w'\in A\setminus \{w,u\}$ such that $K_{w'}\cap K_{w}\neq \emptyset$, then by Lemma \ref{lem:arc} $K_{w'}\cap K_{wj_0}\neq \emptyset$ and $K_{wj_0}\cap K_{u} =\emptyset$. If no such $w'$ exists, then $\val(w,G_n)=1$ which implies that $\val(wj_0,G_{n+1})=1$ which also implies $K_{wj_0}\cap K_{u} =\emptyset$. In either case, $K_{wj_0}\cap K_{u} =\emptyset$ and $i$ is unique.
\end{proof}

\begin{lem}\label{lem:valency}
For all $n\in\N$, there exist exactly two words $w\in A^n$ such that the set $K_w\cap \overline{K\setminus K_w}$ contains only one point.
\end{lem}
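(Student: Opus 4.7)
The plan is to show that the cardinality of $K_w \cap \overline{K\setminus K_w}$ equals the valence of $w$ in the graph $G_n$, which by Lemma~\ref{lem:arc} is a combinatorial arc. Since a combinatorial arc has exactly two vertices of valence $1$ and all others of valence $2$, the lemma will follow from the identity
\[|K_w \cap \overline{K\setminus K_w}|=\val(w,G_n)\in\{1,2\}.\]

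I would first establish
\[K_w \cap \overline{K\setminus K_w} \;=\; \bigcup_{\{w,u\}\in\Edge(G_n)}(K_w\cap K_u).\]
Since $K=\bigcup_{v\in A^n}K_v$ is a finite union of closed sets, $\overline{K\setminus K_w}\subset\bigcup_{v\neq w}K_v$, which gives the inclusion $\subset$ (only neighbors $u$ of $w$ contribute to the intersection). For the reverse inclusion, fix a neighbor $u$ and let $p$ be the unique point of $K_u\cap K_w$ guaranteed by Lemma~\ref{lem:intersection}. Using the self-similar decomposition $K_u=\bigcup_{v\in A^m}K_{uv}$, together with $\diam K_{uv}\leq L_{uv}\to 0$ as $m\to\infty$ (all $L_i>0$), I produce a sequence of descendant cylinders $K_{uv_m}\ni p$ of vanishing diameter; since $K_u\cap K_w\subset\{p\}$, each such cylinder contains points of $K_u\setminus K_w$, yielding a sequence in $K\setminus K_w$ converging to $p$ and placing $p\in\overline{K\setminus K_w}$.

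Next, by Lemma~\ref{lem:intersection} each $K_w\cap K_u$ with $u$ a neighbor of $w$ is a single point, so $|K_w\cap\overline{K\setminus K_w}|\leq\val(w,G_n)$. When $\val(w,G_n)=1$, the cardinality is exactly $1$. When $\val(w,G_n)=2$, say with neighbors $u_1,u_2$ and respective intersection points $p_1,p_2$, I claim $p_1\neq p_2$: otherwise $p_1=p_2\in K_{u_1}\cap K_{u_2}$ would make $\{u_1,u_2\}$ an edge of $G_n$, producing the 3-cycle $\{w,u_1\},\{w,u_2\},\{u_1,u_2\}$ and contradicting that $G_n$ is a combinatorial arc. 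Hence the cardinality equals $2$ precisely at the interior vertices and equals $1$ precisely at the two endpoints, giving the claim.

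The main obstacle is the reverse inclusion in the first step, where one must verify that the unique intersection point $p$ is a limit of points of $K\setminus K_w$; this amounts to showing $p$ is non-isolated in $K_u$, which is forced by iterating the IFS and exploiting the standing assumption $\diam K>0$ with all $L_i>0$, so that the nested cylinders through $p$ shrink to $p$ without exhausting $K_u$. The remaining work is combinatorial and is dictated directly by the arc structure of $G_n$.
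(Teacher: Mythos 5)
Your approach is essentially the same as the paper's: both rest on Lemma~\ref{lem:arc} (each $G_n$ is a combinatorial arc) and Lemma~\ref{lem:intersection} (adjacent cylinders meet in at most one point). However, your write-up is actually \emph{more complete} than the paper's in two respects. First, you justify the reverse inclusion $K_w \cap K_u \subset \overline{K\setminus K_w}$, which the paper asserts silently when it writes $K_w\cap K_{w'} = K_w \cap \overline{K\setminus K_w}$. Second, you explicitly handle the valence-$2$ case, arguing via the no-$3$-cycle property of a combinatorial arc that the two intersection points are distinct; the paper's proof only treats the two valence-$1$ words and leaves the ``exactly two'' part to the reader.

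There is one genuine soft spot in your reverse-inclusion argument. You claim each shrinking cylinder $K_{uv_m}\ni p$ ``contains points of $K_u\setminus K_w$,'' which requires $K_{uv_m}\neq\{p\}$, i.e., $\diam K_{uv_m}>0$. The bound $\diam K_{uv}\le L_{uv}$ gives only an \emph{upper} estimate; for a general IFS of contractions (not similarities), $L_{uv}>0$ does not by itself prevent $\phi_{uv}(K)$ from degenerating to a point, so $\diam K>0$ and $L_i>0$ do not automatically force the cylinders to be nondegenerate as your last paragraph suggests. This can be patched: if $K_u$ were a single point $\{p\}$, then every child cylinder $K_{uj}$, $j\in A$, would equal $\{p\}$, so all $\binom{k}{2}$ pairs $\{uj,uj'\}$ would be edges in $G_{|u|+1}$, violating the no-branching hypothesis (recall $k\ge 4$ after passing to $\mathcal{F}^2$). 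Once $\diam K_u>0$ is known, the cleanest route is simply: $K_u$ is the continuous image of the connected set $K$, hence connected, hence (having more than one point) has no isolated points, so $p\in\overline{K_u\setminus\{p\}}\subset\overline{K\setminus K_w}$. With that patch, your proof is complete and in fact supplies details the paper omits.
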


\begin{proof}
By Lemma \ref{lem:arc}, for each $n\in\N$, there exist exactly two distinct words $w,u \in A^n$ that have valence $1$ in $G_n$. Fix one such word, say $w$. Then there exists a unique $w'\in A^n\setminus\{w\}$ such that $K_w\cap K_{w'} = K_w \cap \overline{K\setminus K_w}$. By Lemma \ref{lem:intersection}, the latter intersection is a single point.
\end{proof}

We are ready to prove Proposition \ref{prop:IFSarc}.

\begin{proof}[{Proof of Proposition \ref{prop:IFSarc}}]
By Lemma \ref{lem:valency}, there exist two infinite words $w_0,w_1 \in A^{\N}$ such that for all $n\in\N$, $w_0(n)$ and $w_1(n)$ are the unique vertices of valence 1 in $G_n$. Set
\[ \{v_0\} = \bigcap_{n=1}^{\infty} K_{w_0(n)} \qquad\text{and}\qquad \{v_1\} = \bigcap_{n=1}^{\infty} K_{w_1(n)}.\]

Fix $r\in(0,1)$ and let $f:[0,1] \to K$ be the map given by Proposition \ref{lem:Holderconn} with $x=v_0$ and $y=v_1$.
We already have that $f([0,1]) \subset K$. We claim that for all $m\in\N$ and all $w\in A^*(r^m)$,  we have $f([0,1]) \cap K_w \neq \emptyset$. Assuming the claim, it follows that $\dist(x,f([0,1])) \leq r^m$ for all $x\in K$ and all $m\in\N$. Hence $K\subset f([0,1])$ and $K=f([0,1])$.

Let $N = \max\{n\in\N : A^*(r^m)\cap A^n\neq \emptyset\}$. To prove the claim fix $w\in A^*(r^m)$. By Lemma \ref{lem:partition}, there exists $u\in A^N$ such that $K_u \subset K_w$. If $u\in\{w_0(N),w_1(N)\}$, then $K_w$ contains one of $v_0,v_1$, so $f([0,1]) \cap K_w \neq \emptyset$. If $u\not\in\{w_0(N),w_1(N)\}$, then $\val(u,G_N)=2$ and by Lemma \ref{lem:arc}, $K\setminus K_u$ has two components, one containing $v_0$ and the other containing $v_1$. Since $f([0,1])$ is connected and contains $v_0,v_1$, $\emptyset \neq f([0,1])\cap K_u \subset f([0,1])\cap K_w$.

It remains to show that $f$ is a homeomorphism and suffices to show that $f$ is injective. Recall the definitions of $\mathscr{E}_m$ and $f_m$ from the proof of Proposition \ref{lem:Holderconn}. By (P2) and (P3) therein, for each $m\in\N$ and $I\in\mathscr{E}_m$, there exists $w_I\in A^*(r^m)$ such that $f(I)\subset K_{w_I}$. Moreover, $w_I\neq w_J$ if $I\neq J$. In conjunction with the fact that $f([0,1])=K$, we have that $f(I) =K_{w_I}$. By design of the map $f$, it is easy to see that $K_{w_I}\cap K_{w_J}$ if and only if $I\cap J$. Assume $x,y \in [0,1]$ with $x\neq y$. Then there exists $m\in\N$ and disjoint $I,J \in \mathscr{E}_m$ such that $x\in I$ and $y\in J$. Hence $K_{w_I}\cap K_{w_J} = \emptyset$. Therefore, $f(I)\cap f(J) = \emptyset$, which yields $f(x)\neq f(y)$.
\end{proof}

\subsection{Bounded turning and self-similar bi-H\"older arcs} \label{sec:bt}
With additional information on the contractions of $\mathcal{F}$ and how the components $K_i=\phi_i(K)$ of the attractor $K$ intersect, the map $f$ constructed in Proposition \ref{prop:IFSarc} is actually a $(1/s)$-bi-H\"older homeomorphism. We say that $K$ has \emph{bounded turning} if there exists $C\geq 1$ such that for all distinct $i,j \in A$ with $K_i\cap K_j \neq \emptyset$: if $x\in K_i$, $y\in K_j$ and $z\in K_i\cap K_j$, then
\begin{equation}\label{eq:BT}
d(x,y)\geq C^{-1}\max\{d(x,z),d(y,z)\}.
\end{equation}
In general, self-similar curves (even in $\R^2$) do not have the bounded turning property; see \cite[Example 2.3]{ATK} or \cite[Theorem 2]{WenXi}.

The following proposition follows from Theorem 1.5 and Lemma 3.2 of  \cite{Iseli-Wildrick}.

\begin{prop}[self-similar sets without branching and with bounded turning] \label{prop:biHolder} Let $\mathcal{F}$ be an IFS over a complete metric space that is generated by similarities; let $s=\sdim(\mathcal{F})$. If $K_\mathcal{F}$ is connected, $\diam K_\mathcal{F}>0$, $\mathcal{F}$ has no branching, and $K_\mathcal{F}$ is bounded turning, then there exists a $(1/s)$-bi-H\"older homeomorphism $f:[0,1] \to K$.
\end{prop}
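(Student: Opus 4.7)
The plan is to combine Proposition \ref{prop:IFSarc} with the bounded turning hypothesis. Proposition \ref{prop:IFSarc} already yields a $(1/s)$-H\"older homeomorphism $f:[0,1]\to K_\mathcal{F}$, so the only missing estimate is the reverse inequality $|p-q|\lesssim d(f(p),f(q))^s$, which upgrades the H\"older homeomorphism to a bi-H\"older one.

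First I would upgrade condition \eqref{eq:BT}, which as stated constrains only pairs of adjacent level-one cylinders, to a bounded turning statement at every scale and ultimately for the whole arc $K_\mathcal{F}$. Because every $\phi_w$ is a similarity, \eqref{eq:BT} transports along the chain $G_n$ of Lemma \ref{lem:arc} at each generation $n$ with the same constant $C$: whenever $u,v\in A^n$ are adjacent in $G_n$ and $K_u\cap K_v=\{z\}$ (the latter a singleton by Lemma \ref{lem:intersection}), one has $d(x,y)\geq C^{-1}\max\{d(x,z),d(y,z)\}$ for every $x\in K_u$ and $y\in K_v$. Iterating this estimate along the arc $G_n$ and using the uniform bounds on $L_i$, one can deduce that $K_\mathcal{F}$ enjoys bounded turning as a topological arc: for every $x,y\in K_\mathcal{F}$,
\[ d(x,y)\gtrsim_C \diam[x,y]_{K_\mathcal{F}}, \]
where $[x,y]_{K_\mathcal{F}}$ denotes the unique sub-arc of $K_\mathcal{F}$ joining $x$ and $y$.

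The second step is to establish a reverse diameter estimate for the parameterization: for every $0\leq p<q\leq 1$,
\[ |q-p|\lesssim_{L_1,s}\bigl(\diam f([p,q])\bigr)^s. \]
In the no-branching regime, the chain lemma forces $\mathcal{W}_m=A^*(r^m)$ in the construction of $f$ from Proposition \ref{lem:Holderconn}, so each cylinder $K_w$ at scale $r^m$ corresponds to a sub-interval $I_w=f^{-1}(K_w)\subset[0,1]$ of length exactly $L_w^s$, saturating Lemma \ref{l:sum-Ls}. Given $[p,q]$, choose the largest $m$ with $|q-p|\geq 2r^{ms}$; then $[p,q]$ must contain some full interval $I_w$ with $w\in A^*(r^m)$, whence $\diam f([p,q])\geq \diam K_w=L_w\diam K_\mathcal{F}\gtrsim_{L_1} r^m\simeq|q-p|^{1/s}$.

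Combining the two steps, for $x=f(p)$ and $y=f(q)$,
\[ |p-q|\lesssim_{L_1,s}\bigl(\diam f([p,q])\bigr)^s=\bigl(\diam[x,y]_{K_\mathcal{F}}\bigr)^s\lesssim_C d(x,y)^s, \]
which together with Proposition \ref{prop:IFSarc} yields the desired $(1/s)$-bi-H\"older homeomorphism. The main obstacle is the global upgrade of bounded turning in the first step: extending the single-intersection estimate \eqref{eq:BT} to a quantitative diameter-versus-distance bound on arbitrarily long sub-arcs requires a careful iteration along cylinder chains and control of the accumulated constants as one traverses many consecutive cylinders. This is precisely the content of Lemma 3.2 in Iseli--Wildrick, whence the citation.
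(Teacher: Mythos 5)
Your proposal is a genuine proof where the paper gives none: the paper simply cites Theorem~1.5 and Lemma~3.2 of \cite{Iseli-Wildrick}, with no argument supplied. You instead assemble the result from the paper's own machinery: the upper H\"older bound comes from Proposition~\ref{prop:IFSarc}; the exact interval lengths $|I_w|=L_w^s$ come from observing that in the no-branching regime the chain lemma forces $\mathcal{W}_m=A^*(r^m)$ (so the inequality $|I_w|\geq L_w^s$ together with $\sum_{w\in A^*(r^m)}L_w^s=1$ from Lemma~\ref{l:sum-Ls} saturates); and the lower H\"older bound then follows from your reverse diameter estimate combined with a global bounded turning inequality for the arc $K_\mathcal{F}$, which you defer to Lemma~3.2 of \cite{Iseli-Wildrick}. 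The logic is sound and this is a more informative route than the paper's bare citation. Two remarks. First, a typo: you should take the \emph{smallest} integer $m$ with $|q-p|\geq 2r^{ms}$, not the largest; since $r<1$, the condition defines an upper ray of integers, and minimality is what yields $r^{m s}\simeq_{r,s}|q-p|$ together with the containment of a full interval $I_w$. Second, the local-to-global bounded turning upgrade is in fact short to make self-contained in your framework, so you need not lean on \cite{Iseli-Wildrick} for it: since the $\phi_w$ are similarities, \eqref{eq:BT} rescales to every generation with the same constant $C$; given $x\neq y$, let $K_w$ be the smallest cylinder containing both, pull back by $\phi_w^{-1}$ so the images land in distinct first-level cylinders $K_i,K_j$, and either apply \eqref{eq:BT} directly (if $K_i\cap K_j\neq\emptyset$) to get $d(x',y')\geq(2C)^{-1}\diam[x',y']$, or use that the minimum distance between non-adjacent first-level cylinders is a strictly positive finite-combinatorial constant. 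Filling in that paragraph would make your argument entirely self-contained, which is a real improvement over the paper's treatment of this proposition.
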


\subsection{Sharp exponents for self-affine snowflake curves in the plane}\label{sec:sharpsnow} For each line segment $l \subset \R^2$ and $\a\in(0,1)$, define the \emph{diamond $\mathcal{D}_\alpha(l)$ with axis $l$ and aperture $\a$},
\[ \mathcal{D}_{\a}(l) := \{x\in\R^2 : \dist(x,l) \leq \a\min(|x-p|,|x-q|)\},\]
where $p,q$ are the endpoints of $l$. We will build a family of self-affine snowflake curves as the IFS attractor of a chain of diamonds. Let $l_0:=[0,1]\times\{0\}$ and let $P = l_1\cup\cdots\cup l_k$, $k\geq 2$, be a polygonal arc lying in $\{0,1\}\cup\interior\mathcal{D}_{1/2}(l_0)$, enumerated so that
\begin{itemize}
\item $l_i\cap l_j \neq \emptyset$ if and only if $|i-j|\leq 1$,
\item $(0,0)$ is an endpoint of $l_1$ and $(1,0)$ is an endpoint of $l_k$.
\end{itemize}
Choose apertures $\a_i \in (0,1/2)$ small enough so that \begin{equation}\label{e:disjoint-diamonds}\mathcal{D}_{\a_i}(l_i)\cap \mathcal{D}_{\a_j}(l_j) = l_i\cap l_j\quad\text{for all }1\leq i< j\leq k.\end{equation} For each $i\in\{1,\dots,k\}$, fix an affine homeomorphism $\phi_i:\R^2 \to \R^2$ such that $\phi_i(l_0)=l_i$ and $\phi_i(\mathcal{D}_{1/2}(l_0)) = \mathcal{D}_{\a_i}(l_i)$. Because each aperture $\alpha_i<1/2$, \begin{equation*} \Lip(\phi_i) = |l_i|<1\quad\text{for all }1\leq i\leq k,\end{equation*} where $|l_i|$ denotes the length of $l_i$. In particular, $\mathcal{F}=\{\phi_i:1\leq i\leq k\}$ is an IFS over $\R^2$; see Figure \ref{diamond-generators}. \begin{figure}[thb]
\begin{center}\includegraphics[width=3in]{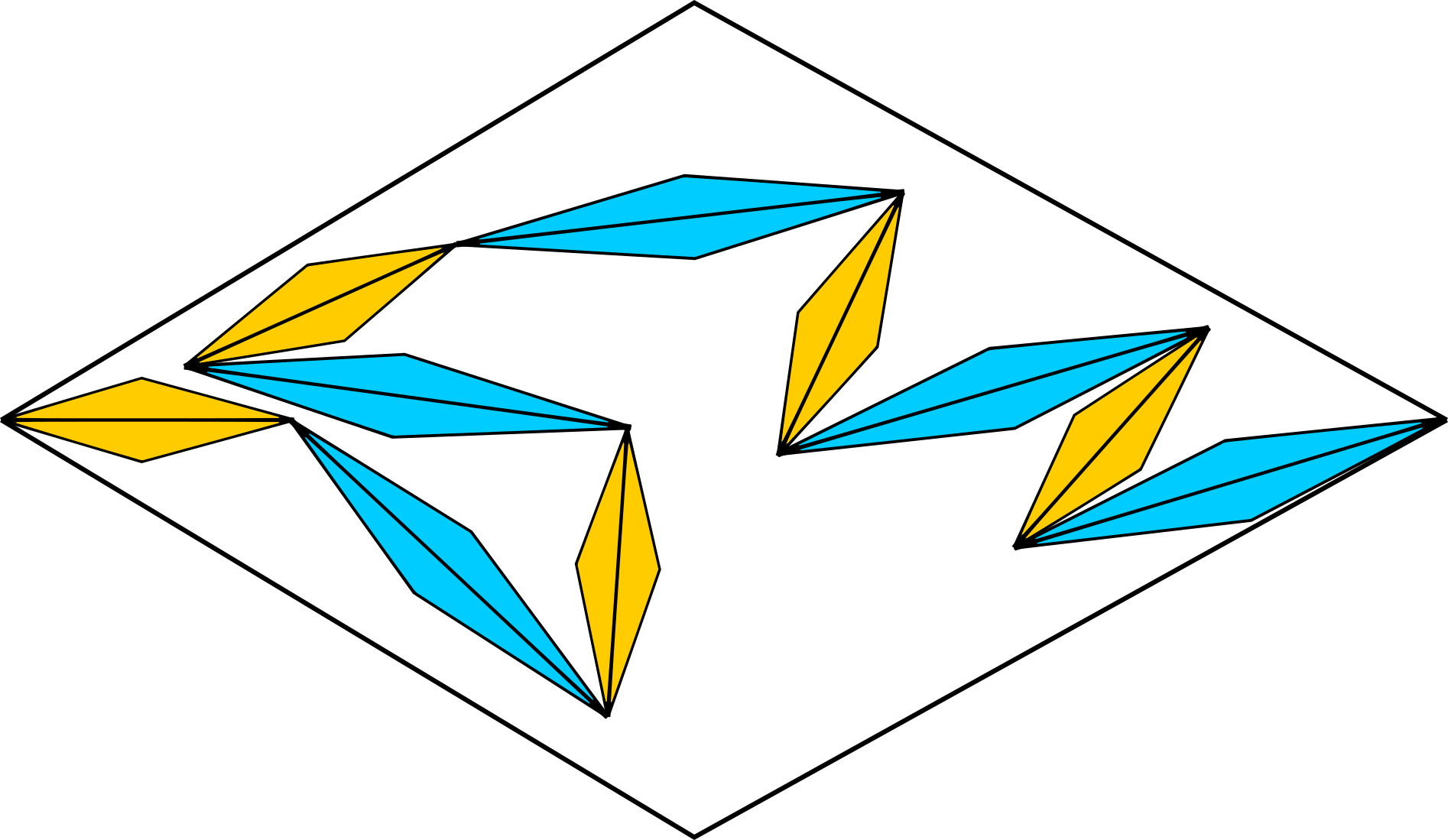}\end{center}
\caption{Generators of an IFS for a self-affine snowflake curve.}\label{diamond-generators}
\end{figure} Let $s=\sdim(\mathcal{F})$ and let $K=K_\mathcal{F}$ denote the attractor of $\mathcal{F}$. Since $\mathcal{F}$ has no branching, the snowflake curve $K$ is a $(1/s)$-H\"older arc by Proposition \ref{prop:IFSarc}; the endpoints of $K$ are $p_0 = (0,0)$ and $p_1=(1,0)$. We now show that the exponent cannot be increased.

\begin{lem}
If $p_0$, $p_1$ are connected by a $(1/\a)$-H\"older curve in $K$, then $\a\geq \sdim(\mathcal{F})$.
\end{lem}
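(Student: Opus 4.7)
I would organize the proof in four steps: surjectivity of $f$, a chain argument at each level, a H\"older inequality, and a divergence argument.

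First, since $\mathcal{F}$ has no branching, Proposition \ref{prop:IFSarc} gives that $K$ is a topological arc with endpoints $p_0, p_1$, parameterized by some homeomorphism $g:[0,1]\to K$ with $g(0)=p_0, g(1)=p_1$. Any continuous $f:[0,1]\to K$ with $f(0)=p_0, f(1)=p_1$ must then be surjective onto $K$, because $g^{-1}\circ f:[0,1]\to[0,1]$ fixes the endpoints and is surjective by the intermediate value theorem.

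Second, fix $n\geq 1$ and enumerate $A^n=\{w_1,\dots,w_{k^n}\}$ in the natural order along the arc (Lemma \ref{lem:arc}); consecutive pieces $K_{w_j}, K_{w_{j+1}}$ meet at a single point $x_j$ by Lemma \ref{lem:intersection}, and I set $x_0:=p_0$ and $x_{k^n}:=p_1$. Let $t_j:=\min f^{-1}(x_j)$. Since each $x_j$ is a cut point separating $p_0$ from $p_1$ in $K$, a first-crossing argument yields $0=t_0<t_1<\cdots<t_{k^n}\leq 1$, and the connected image $f([t_{j-1},t_j])$ contains the entire sub-arc $K_{w_j}$. Hence $\diam f([t_{j-1},t_j])\geq \diam K_{w_j}$, and combining with the H\"older estimate $\diam f([t_{j-1},t_j])\leq H(t_j-t_{j-1})^{1/\alpha}$ (where $H:=\Hold_{1/\alpha}(f)$) and summing telescopically gives
\[
\sum_{w\in A^n}(\diam K_w)^\alpha \;\leq\; H^\alpha \qquad \text{for every }n\geq 1.
\]

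Third, I would establish a uniform geometric lower bound $\diam K_w\geq c_0 L_w$ with $c_0>0$ independent of $w$. Because $P$ is a non-trivial polygonal arc in $\interior\mathcal{D}_{1/2}(l_0)$, the attractor $K$ is not contained in any single line (it contains $p_0,p_1$ and at least one interior polygon vertex off the $x$-axis), so its minimal width $\omega(K):=\inf_{|e|=1}\max_{p,q\in K}|(p-q)\cdot e|$ is strictly positive. Since $\Lip\phi_w=L_w$ is realized by the maximal singular value of the linear part $A_w$, one has $\diam K_w=\max_{p,q\in K}|A_w(p-q)|\geq \sigma_1(A_w)\,\omega(K)$. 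The diamond disjointness condition \eqref{e:disjoint-diamonds} then forces $\sigma_1(A_w)\geq c_0'L_w$, so $\diam K_w\geq c_0 L_w$. Plugging this into the H\"older inequality,
\[
c_0^\alpha\Big(\sum_{i=1}^k L_i^\alpha\Big)^n \;=\; c_0^\alpha\sum_{w\in A^n}L_w^\alpha \;\leq\; H^\alpha.
\]
If $\alpha<s=\sdim(\mathcal{F})$, then $\sum_i L_i^\alpha>1$ by definition of $s$, the left-hand side diverges as $n\to\infty$, and this contradicts the fixed bound $H^\alpha$. Therefore $\alpha\geq s$.

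\textbf{Main obstacle.} The subtle step is the uniform lower bound $\diam K_w\gtrsim L_w$. In general, for a composition of affine maps the operator norm $\sigma_1(A_w)$ can be strictly smaller than the product $\prod_j \|A_{i_j}\|=L_w$ (one factor's maximal stretching direction may collide with the next factor's minimal stretching direction), so one cannot casually conclude that $\diam\phi_w(K)$ is comparable to $L_w$. The geometric content of the proof is that the disjoint-diamond chain structure---specifically, that each $\phi_i$ maps the axis $l_0$ onto $l_i$ and the reference diamond onto a thinner diamond with aperture $\alpha_i<1/2$---precludes such catastrophic decay across compositions and preserves the alignment needed to convert $\diam K$ into $\diam K_w$ up to a uniform constant.
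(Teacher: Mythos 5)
Your Steps 1--2 are correct and are actually more careful than the paper's own write-up of this lemma, which simply says ``choose an interval $I_w\subset[0,1]$ such that $f(I_w)=\phi_w(K)$'' without explaining why such intervals exist with disjoint interiors; your first-crossing argument, producing consecutive intervals $[t_{j-1},t_j]$ with disjoint interiors and $f([t_{j-1},t_j])\supset K_{w_j}$, is exactly the right way to justify that step. The substance of the proof, though, lies in Step~3, and that step has a genuine gap --- one that you yourself flag but do not close. You need a uniform lower bound $\diam K_w \geq c_0 L_w$ with $L_w=\prod_j \Lip\phi_{i_j}$, i.e.\ $\sigma_1(A_w)\geq c_0' \prod_j\sigma_1(A_{i_j})$. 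For a composition of genuinely affine (non-conformal) maps this is \emph{not} automatic and is generically false: writing $A_i=R_{\theta_i}\operatorname{diag}(L_i,c_i)$ with $c_i<L_i$ (which is what the diamond condition forces when $\alpha_i<1/2$), the normalized matrices $B_i:=A_i/L_i=R_{\theta_i}\operatorname{diag}(1,c_i/L_i)$ have $\|B_i\|=1$ yet $\|B_{i_1}\cdots B_{i_n}\|$ typically decays exponentially in $n$ whenever the $\theta_i$ are not all zero (negative top Lyapunov exponent; one only gets $\sigma_1(A_w)\geq\prod_j\sqrt{L_{i_j}c_{i_j}}$ from $\sigma_1\sigma_2=\det$). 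So asserting that the disjoint-diamond chain ``precludes catastrophic decay across compositions'' is not a proof --- it is the very claim that needs proving, and nothing in \eqref{e:disjoint-diamonds} visibly prevents the misalignment you worry about.

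For what it's worth, the paper's own proof of this lemma asserts the sharper exact identities $\diam\phi_w(K)=|\phi_w(l_0)|$ and $\sum_{w\in A^n}|\phi_w(l_0)|^{\alpha}=\bigl(\sum_{i\in A}|l_i|^{\alpha}\bigr)^{n}$, which together reduce to $|\phi_w(l_0)|=\prod_j|l_{i_j}|$. That identity holds when the $\phi_i$ are similarities but already fails at level two of a genuine affine chain: $|\phi_{ij}(l_0)|=|l_j|\,|A_iu_j|<|l_i|\,|l_j|$ as soon as $l_j$ is not parallel to $l_0$. So you have correctly located the crux of the argument; your proposal identifies the obstacle but does not resolve it, and what you offer under ``Main obstacle'' is a restatement of the difficulty rather than a way around it.
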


\begin{proof}
Fix a $(1/\a)$-H\"older map $f:[0,1] \to K$ such that $f(0)=p_0$ and $f(1)=p_1$ and write $H:=\Hold_{1/\a}(f)$. Since $K$ has positive diameter, $H>0$. Let $A=\{1,\dots,k\}$ denote the alphabet associated to $\mathcal{F}$. Fix a generation $n\in\N$, and for each $w\in A^n$, choose an interval $I_w \subset [0,1]$ such that $f(I_w)=\phi_w(K)$. The intervals $\{I_w:w\in A^n\}$ have mutually disjoint interiors by \eqref{e:disjoint-diamonds}. Thus,
\[ 1 \geq \sum_{w\in A^n}|I_w| \geq H^{-\a} \sum_{w\in A^n}(\diam{\phi_w(K)})^{\a} = H^{-\a} \sum_{w\in A^n}|\phi_w(l_0)|^{\a} = H^{-\a} \left(\sum_{i\in A}|l_i|^{\a}\right)^n.\]
Since $n$ can be arbitrarily large, $\sum_{i\in A}|l_i|^{\a} \leq 1$. Therefore, $\a\geq \sdim(\mathcal{F})$.
\end{proof}

As a final remark, we note that it is possible to choose $P$ so that $|l_1|^2 + \cdots+|l_k|^2 > 1$, in which case $\sdim(\mathcal{F})>2$. In particular, there exist self-affine snowflake curves $\Gamma\subset\R^2$ such that $\Gamma$ is a $(1/\alpha)$-H\"older curve if and only if $\alpha\geq \alpha_0(\Gamma)>2$.

\section{H\"older parameterization of self-similar sets (Remes' method)}\label{sec:proof}

Our goal in this section is to record a proof of Theorem \ref{thm:remes} that combines original ideas of Remes \cite{Remes} with our style of H\"older parameterization from above. To aid the reader wishing to learn the proof, we have attempted to include  a clear description of the key properties of parameterizations that approximate the final map (see Lemma \ref{prelim}), which are obscured in Remes' thesis.

Fix an IFS $\mathcal{F}=\{\phi_1,\dots,\phi_k\}$ over a complete metric space $(X,d)$; let $s=\sdim(\mathcal{F})$. Assume that $\mathcal{F}$ is generated by similarities, $K=K_{\mathcal{F}}$ is connected, $\diam K>0$, and $\mathcal{H}^s(K)>0$, where $s=\sdim(\mathcal{F})$. Recall that $\mathcal{H}^s(K)>0$ implies $\mathcal{F}$ satisfies the strong open set condition by Theorem \ref{thm:Schief}. Moreover, by Lemma \ref{lem:sdimvsdim}, $K$ is Ahlfors $s$-regular; thus, we can find constants $0<C_1 \leq C_2 <\infty$ such that
\begin{equation}\label{e:sreg} C_1 \rho^s \leq \mathcal{H}^s(K\cap B(x,\rho)) \leq C_2 \rho^s \quad\text{for all $x\in K$ and all $0< \rho \leq \diam K$}.\end{equation} As usual, we adopt the notation and conventions set in the first paragraph of \S\ref{sec:Holdcon} as well as in \S\ref{sec:words}. Rescaling the metric, we may assume without loss of generality that $\diam K=1$. Since $K$ is self-similar, it follows that \begin{equation} \diam K_w = L_w\quad\text{for all }w\in A^*,\end{equation} \begin{equation}\label{e:ss-diam} L_1 \delta \leq \diam K_w < \delta\quad\text{for all }w\in A^*(\delta).\end{equation} If $\mathcal{F}$ has no branching (see \S\ref{sec:nobranching}
), then a $(1/s)$-H\"older parameterization of $K$ already exists by Proposition \ref{prop:IFSarc}
. Thus, we shall assume $\mathcal{F}$ has \emph{branching}, i.e.~there exists $m\in\N$ and distinct words $w_1,\dots,w_4\in A^m$ such that $K_{w_1}\cap K_{w_i} \neq \emptyset$ for each $i\in\{2,3,4\}$. In the event that $m\geq 2$ (see Example \ref{ex:sierpinski}), we replace $\mathcal{F}$ with the self-similar IFS $\mathcal{F}' = \{\phi_w : w\in A^m\}$. This causes no harm to the proof, because the attractors coincide, i.e.~ $K_\mathcal{F'}=K_\mathcal{F}$, and $\sdim(\mathcal{F'})=\sdim(\mathcal{F})$. Therefore, without loss of generality, we may assume that there exist distinct letters $i_1,i_2,i_3,i_4 \in A$ such that \begin{equation}\label{e:branching} K_{i_1}\cap K_{i_j} \neq \emptyset\quad\text{for each }j\in\{2,3,4\}.\end{equation}

\begin{ex}\label{ex:sierpinski} Divide the unit square into $3\times 3$ congruent subsquares with disjoint interiors $S_i$ ($1\leq i\leq 9$). Let $S_9$ denote the central square and for each $1\leq i\leq 8$, let $\psi_i:\R^2\to \R^2$ be the unique rotation-free and reflection-free similarity that maps $[0,1]^2$ onto $S_i$. The attractor of the IFS $\mathcal{G}=\{\psi_1,\dots,\psi_8\}$ is the \emph{Sierpi\'nski carpet}. Looking only at the intersection pattern of the first iterates $\psi_1(K_\mathcal{G})$,\dots,$\psi_8(K_\mathcal{G})$, it appears that $\mathcal{G}$ has no branching. However, upon examining the intersections of the second iterates $\psi_i\circ \psi_j(K_\mathcal{G})$ ($1\leq i,j\leq 8$), it becomes apparent that $\mathcal{G}$ has branching. \end{ex}

To continue, use the Kuratowski embedding theorem to embed $(K,d)$ into $(\ell_{\infty},|\cdot|_\infty)$. (If $K$ already lies in some Euclidean or Banach space, or in a complete quasiconvex metric space, then the construction below can be carried out in that space instead.) Let $d_H$ denote the Hausdorff distance between compact sets in $\ell_\infty$. By the Arzel\'a-Ascoli theorem, to complete the proof of Theorem \ref{thm:remes}, it suffices to establish the following claim.

\begin{prop}\label{prop:Remes}
There exists a sequence $(F_N)_{N=1}^\infty$ of $(1/s)$-H\"older continuous maps $F_N :[0,1] \to \ell_{\infty}$ with uniformly bounded H\"older constants such that
\[ \lim_{N\to \infty}d_H(F_N([0,1]),K)= 0.\] \end{prop}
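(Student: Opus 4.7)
The plan is to mimic the piecewise-linear construction in Proposition \ref{lem:Holderconn}, but at each stage $N$ building a map $F_N : [0,1] \to \ell_\infty$ whose image meets every cylinder $K_w$ with $w \in A^*(r^N)$, rather than merely tracing a path between two prescribed points. Here $r \in (0,1)$ is a fixed ratio with $r \simeq L_1$. Once this is done, Lemma \ref{l:LipHold} and Corollary \ref{c:LipHold} (applied with $t=1$ and $\rho_N = r^N$) will convert the sequence $\{F_N\}$ into the desired uniformly H\"older family, while Hausdorff convergence $F_N([0,1]) \to K$ is immediate from the universal-visit property together with $\diam K_w \leq r^N$.

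The crucial new ingredient, which is absent in the general setting of Theorem \ref{thm:main}, is Ahlfors $s$-regularity of $K$ (Lemma \ref{lem:sdimvsdim}). I would first use it to bound the local combinatorial complexity of the cylinder decomposition: for every $N$ and every $w \in A^*(r^N)$, the number of neighbors $\{w' \in A^*(r^N)\setminus\{w\} : K_{w'} \cap K_w \neq \emptyset\}$ is at most a constant $D_0 = D_0(C_1,C_2,L_1,s)$. Indeed, such neighbors lie in $B(x, 3r^N)$ for any $x \in K_w$, each satisfies $\mathcal{H}^s(K_{w'}) \gtrsim r^{Ns}$ by \eqref{e:sreg} and \eqref{e:ss-diam}, and distinct cylinders overlap in $\mathcal{H}^s$-null sets under the SOSC; dividing the total mass by this lower bound gives the $O(1)$ count.

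With this bounded-degree estimate in hand, I would construct inductively, for each $N$, a walk $w^{(N)}_1,\dots,w^{(N)}_{n_N}$ through $A^*(r^N)$ with $K_{w^{(N)}_j} \cap K_{w^{(N)}_{j+1}} \neq \emptyset$, visiting every word and each word at most $D_0$ times; a depth-first traversal of a spanning tree of the neighbor graph (whose connectedness follows from that of $K$) achieves this, and combined with Lemma \ref{6} it yields $n_N \lesssim r^{-Ns}$. The walks are arranged to be refinement-compatible: the walk at generation $N+1$ is obtained from the walk at generation $N$ by replacing each visit to $K_w$ with a sub-walk through the children in $A^*_w(r^{N+1})$ provided by Lemma \ref{lem:conn}, preserving entry and exit gateways. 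Choosing gateways $p^{(N)}_j \in K_{w^{(N)}_j} \cap K_{w^{(N)}_{j+1}}$ and partitioning $[0,1]$ into intervals $I^{(N)}_j$ of length $\simeq L_{w^{(N)}_j}^s$ (possible since $\sum_j L_{w^{(N)}_j}^s \leq D_0$ by Lemma \ref{l:sum-Ls}), I would let $F_N$ be the piecewise linear interpolation in $\ell_\infty$ through the $p^{(N)}_j$'s.

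It remains to verify the hypotheses of Corollary \ref{c:LipHold}: on each $I^{(N)}_j$ the Lipschitz constant of $F_N$ is at most $\diam K_{w^{(N)}_j}/|I^{(N)}_j| \lesssim L_{w^{(N)}_j}^{1-s} \leq L_1^{1-s}r^{N(1-s)}$, matching the required growth $\rho_N^{1-s}$; and the refinement-compatibility gives $\|F_N - F_{N+1}\|_\infty \lesssim r^N$ by the same three-term triangle estimate used to prove (P3) in Proposition \ref{lem:Holderconn}, since both maps remain in or within $r^N$ of $K_{w^{(N)}_j}$ on $I^{(N)}_j$. The main obstacle will be the refinement-compatible walk construction: one must ensure that refining a generation-$N$ walk into a generation-$(N+1)$ walk neither exceeds multiplicity $D_0$ nor shifts the interval endpoints, and it is precisely the Ahlfors regularity bound $D_0$ that makes this possible. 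Its absence is the central reason the analogous method in Theorem \ref{thm:main} fails at the endpoint $\alpha = s$.
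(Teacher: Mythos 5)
Your overall framework is in the right ballpark---using Ahlfors $s$-regularity (via Lemma \ref{lem:sdimvsdim}) to bound the local combinatorial complexity of the cylinder decomposition, building piecewise-linear tours of nets of $K$, and controlling H\"older constants via Corollary \ref{c:LipHold}---and this does reflect the spirit of the paper's proof. But there is a genuine gap in the central step, and it is precisely the step you flag as ``the main obstacle'' and then wave away.

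The claim that a \emph{refinement-compatible} sequence of walks through $A^*(r^N)$ can be built with multiplicity uniformly bounded by the degree constant $D_0$ does not follow from the degree bound alone. The issue is distributional: a word $w\in A^*(r^N)$ is visited $m_w\leq D_0$ times in the generation-$N$ walk, with $m_w$ prescribed entry/exit gateway pairs. To refine, you must distribute a covering of all the children of $w$ (there can be $\simeq L_1^{-s}$ of them) across those $m_w$ passages, each passage being forced to start and end at the child cylinders containing the given gateways. If you place a full traversal of the children's adjacency graph in one passage (cost $\leq D_0$ per child) while the other $m_w-1$ passages are simple transits, a child near the ``center'' of the children's spanning tree can already be hit $D_0 + (m_w-1)$ times. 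Balancing the traversal across passages is possible in some examples, but there is no \emph{a priori} reason it works in general with the same constant, and you have not shown how to do it. There is a second, coupled gap: you want $|I^{(N)}_j|\simeq L_{w_j^{(N)}}^s$ \emph{and} exact interval-level refinement $\bigcup_{I\subset J}I = J$; because the sub-visits within a single passage of $w$ cover only a subset of $w$'s children (whose $L_u^s$ do not sum to $L_w^s$ for a single passage), the interval lengths do not refine on the nose, so the hypothesis of Corollary \ref{c:LipHold} is not met without further bookkeeping you have not supplied.

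This is exactly the difficulty that the paper's proof resolves with a different mechanism. The paper does \emph{not} build a refinement-compatible sequence indexed by $N$. Instead it fixes $N$, builds the nets $V_1\subset\cdots\subset V_N$ \emph{backwards} from $V_N=Y_N$ (hence $V_m$ depends on $N$---see Remark \ref{rem:aa2}), builds nested trees $T_m$, and constructs intermediate maps $f_1,\dots,f_N$ on a single tree $T_N$ by \emph{inserting fresh branches} at each stage. Remes' branching lemma (Lemma \ref{lem:Remes}) is what guarantees a supply of fresh branches: it controls from above the number of branches of $T_N$ along a road that are already consumed, and from below the number of available branches at scale $r^{m+t}$. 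This is not a degree bound; it is a statement about branches along roads and requires both Ahlfors regularity and the branch-preserving property of the trees $T_m$. The paper then reparameterizes $f_N$ to get $F_N$, estimates the intermediate maps $F_N^1,\dots,F_N^N$ via Corollary \ref{c:LipHold}, and, in the proof of Theorem \ref{thm:remes}, passes to a subsequential limit via Arzel\`a--Ascoli precisely \emph{because} the constructions for different $N$ are not nested. In fact, the paper explicitly records in Remark \ref{r:aa} that finding a proof avoiding Arzel\`a--Ascoli is an open problem; a correct version of your proposal would resolve it. That is a strong indication that the refinement-compatibility claim requires new ideas and cannot be dismissed as a routine consequence of $D_0$.

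To repair the proposal, you would need either to prove the refinement-compatible bounded-multiplicity tour exists with uniform constants (which, as noted, would be a new contribution beyond the paper), or to abandon refinement across $N$ and follow the paper's route: build an $N$-dependent family of nested intermediate maps on a fixed tree $T_N$, invoke Remes' branching lemma to insert branches without blowing up the count, and extract a limit with Arzel\`a--Ascoli.
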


\begin{rem}\label{r:aa} It is perhaps unfortunate that we have to invoke the Arzel\'a-Ascoli theorem to implement Remes' method. We leave as an \emph{open problem} to find a proof of Theorem \ref{thm:remes} that avoids taking a subsequential limit of the intermediate maps; cf.~ the proofs in \S3 above or the proof of the H\"older traveling salesman theorem in \cite{BNV}.\end{rem}

We devote the remainder of this section to proving Proposition \ref{prop:Remes}.

\subsection{Start of the Proof of Proposition \ref{prop:Remes}}

To start, since $\mathcal{F}$ satisfies the strong open set condition, there exists an open set $U \subset X$ such that $U\cap K \neq \emptyset$, $\phi_i(U)\subset U$ for all $i\in A$ and $\phi_i(U) \cap\phi_j(U) =\emptyset$ for all $i,j \in A$ with $i\neq j$. Fix a point $v\in U \cap K$, choose $\tau\in(0,1/2)$ such that $B_X(v,\tau)\subset U$, and assign $r:= \frac14L_1\tau$. Then, since $\mathcal{F}$ consists of similarities,
\begin{equation}\label{e:separate}
|\phi_w(v)-\phi_u(v)|_\infty\geq  (L_w+L_u)\tau \geq 2L_1\tau r^m = (8r)r^m\quad\text{for all distinct }w,u\in A^*(r^m),
\end{equation}
because the balls $\phi_w(B(v,\tau))=B(\phi_w(v),L_w \tau)$ and $\phi_u(B(v,\tau))=B(\phi_u(v),L_u\tau)$ in $X$ are disjoint. Indeed, if $w_0$ is the longest word in $A^*$ such that $K_w,K_u \subset K_{w_0}$, then for some distinct $i,j\in A$, $\phi_w(B(v,\tau)) \subset \phi_{w_0i}(B(v,\tau)) \subset \phi_{w_0i}(U)$ and $\phi_u(B(v,\tau)) \subset \phi_{w_0j}(B(v,\tau)) \subset \phi_{w_0j}(U)$.

For all $m\in\N$, define the set \begin{equation}\label{e:Y} Y_m:=\{\phi_w(v) : w \in A^*(r^m)\}.\end{equation} The separation condition \eqref{e:separate} ensures that the words in $A^*(r^m)$ and points in $Y_m$ are in one-to-one correspondence. Unfortunately, the sets $Y_m$ are not necessarily nested.

To proceed, fix an index $N\in\N$. We will construct a map $F_N:[0,1]\rightarrow \ell_\infty$ with $\Hold_{1/s} F_N \lesssim_{L_1,s,\tau,C_1,C_2} 1$ and $d_H(F_N([0,1]),K)\lesssim_{L_1,\tau} r^N$.

\subsection{Nets} Following an idea of Remes \cite{Remes}, starting from $Y_N$ and working backwards through $Y_1$, we now produce a nested sequence of sets $V_1\subset\dots\subset V_N$ recursively, as follows.
Set $V_N:=Y_N$. Next, assume we have defined $V_m,\dots,V_N$ for some $2\leq m\leq N$ so that
\begin{enumerate}
\item $ V_m \subset V_{m+1} \subset \cdots\subset V_N = Y_N$; and,
\item for each $i \in \{m,\dots,N\}$ and each $w\in A^*(r^{i})$, there exists a unique $x \in K_w\cap V_i$.
\end{enumerate}
Replace each $x\in Y_{m-1}$ by an element $x'\in V_m \cap K_{u_x}$ of shortest distance to $x$, where $u_x\in A^*(r^{m-1})$ satisfies $\phi_{u_x}(v) = x$. This produces the set $V_{m-1}$. See Figure \ref{fig:y-v}. \begin{figure}[t]
\begin{center}\includegraphics[width=.8\textwidth]{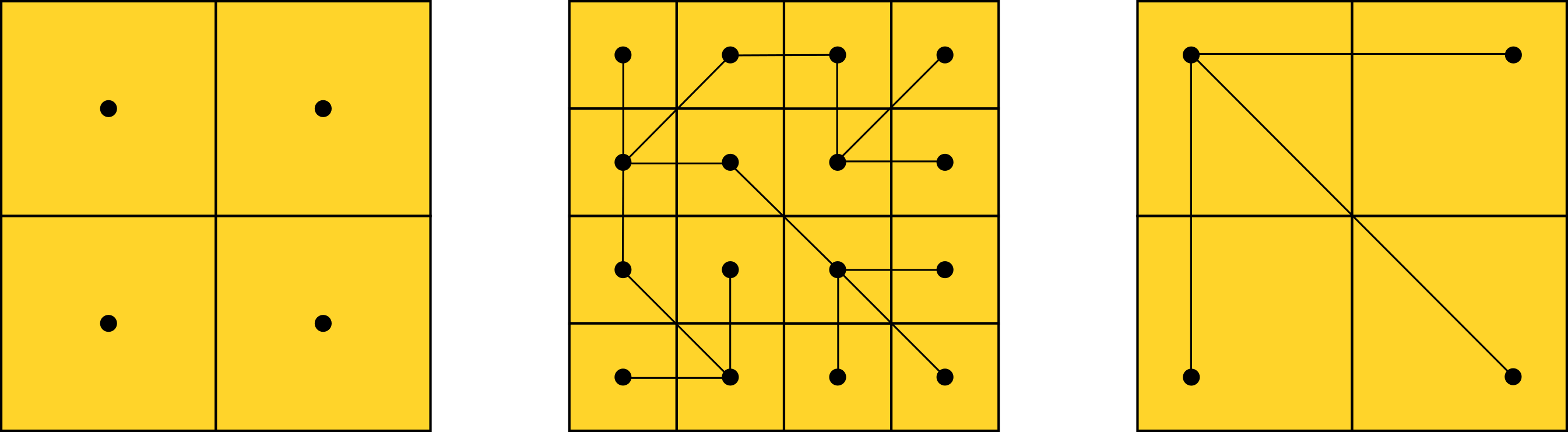}\end{center}
\caption{Schematic for points in the sets $Y_1$ (left), $Y_2=V_2$ (center), and $V_1$ (right) for a self-similar IFS for the square and generation $N=2$. Possible realizations of the trees $T_1$ (right) and $T_2$ (center).}\label{fig:y-v}
\end{figure}
\begin{rem}\label{rem:aa2}
The recursive definition of the sets $V_1\subset\dots\subset V_N$ starting from a fixed level $Y_N$ is one obstacle to proving Theorem \ref{thm:remes} without using the Arzel\'a-Ascoli theorem.
\end{rem}

\begin{lem}[properties of the sets $V_m$]\label{lem:Remmes1}
Let $m\in\{1,\dots,N\}$.
\begin{enumerate}
\item For each $w\in A^*(r^m)$, there exists a unique $x\in V_m\cap K_w$.
\item If $m\leq N-1$, then $V_m\subset V_{m+1}$ and for every $x\in V_{m+1}$ there exists $x'\in V_{m}$ such that $|x-x'|_\infty<r^m$.

\item If $w\in A^*(r^m)$ and $x\in V_m\cap K_w$, then $|x-\phi_w(v)|_{\infty} < (2r) r^{m}$.
\item For all distinct $a,b \in V_m$, we have $|a-b|_{\infty} >(4r) r^m$.
\end{enumerate}
\end{lem}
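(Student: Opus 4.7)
The plan is to prove all four properties by reverse induction on $m$, from $N$ down to $1$, verifying them within each step in the order (3), (4), (1), (2). A preliminary I will use throughout is that $K \subset \overline U$: the Hutchinson operator $E \mapsto \bigcup_i \phi_i(E)$ preserves $\overline U$ (since $\phi_i(U) \subset U$) and has $K$ as the Hausdorff limit of its iterates, so $K_w \subset \overline{\phi_w(U)}$ for every $w \in A^*$. Combined with the OSC, this yields $\phi_i(U) \cap \overline{\phi_j(U)} = \emptyset$ whenever $i \neq j$ (two disjoint open sets, one of which is open, cannot share a boundary point). The base case $m = N$ is $V_N = Y_N$: (3) is trivial, (2) is vacuous, and (4) is immediate from \eqref{e:separate} (since $8r > 4r$). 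Uniqueness in (1) at level $N$ will follow from the OSC: if $\phi_u(v) \in K_w$ for distinct $u,w \in A^*(r^N)$ with longest common prefix $w_0$ and next letters $j$ (in $u$), $i$ (in $w$), then $\phi_u(v) \in \phi_{w_0 j}(U)$ would lie in $\overline{\phi_{w_0 i}(U)} \supset K_w$, contradicting the preliminary.

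For the inductive step, I will prove (3) by producing a near-competitor for the closest-point minimization. Using Lemma \ref{lem:partition}, choose $w' \in A^*_w(r^{m+1})$ with $\phi_w(v) \in K_{w'}$; by the inductive (1) and (3) there is $y \in V_{m+1} \cap K_{w'}$ with $|y - \phi_{w'}(v)|_\infty < (2r) r^{m+1}$, and by \eqref{e:ss-diam}, $|\phi_{w'}(v) - \phi_w(v)|_\infty \leq \diam K_{w'} < r^{m+1}$. The triangle inequality then gives $|y - \phi_w(v)|_\infty < (2r+1) r^{m+1} < (2r) r^m$ using $r < 1/2$, and since the designated $x$ minimizes this distance, (3) follows. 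Property (4) is then immediate: $|a-b|_\infty \geq |\phi_{w_a}(v)-\phi_{w_b}(v)|_\infty - 2(2r) r^m > (8r-4r) r^m = (4r) r^m$ by \eqref{e:separate} and (3). In particular, $w \mapsto x_w$ is injective, so $\card V_m = \card A^*(r^m)$.

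The crux is uniqueness in (1). I will show that if the replacement $x_{w_a}$ also lies in $K_w$ for some $w \neq w_a$ in $A^*(r^m)$, then the ball $B(\phi_{w_a}(v), L_{w_a} \tau) = \phi_{w_a}(B(v,\tau)) \subset \phi_{w_a}(U)$ is disjoint from $\overline{\phi_w(U)} \supset K_w$, by applying the base-case OSC argument at the divergence point of $w_a$ and $w$. Since $L_{w_a} \geq L_1 r^m$ (from the definition of $A^*(r^m)$) and $L_1 \tau = 4r$, this ball has radius $\geq 4r \cdot r^m$, so $|x_{w_a} - \phi_{w_a}(v)|_\infty \geq 4r \cdot r^m$, contradicting the bound $(2r) r^m$ from (3). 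Once (1) holds at level $m$, property (2) is immediate: $V_m \subset V_{m+1}$ is built into the recursion, and for $x \in V_{m+1} \cap K_{w'}$ with ancestor $u \in A^*(r^m)$ (Lemma \ref{lem:partition}), both $x$ and the unique $x' \in V_m \cap K_u$ lie in $K_u$, so $|x-x'|_\infty \leq \diam K_u < r^m$ by \eqref{e:ss-diam}. The main obstacle is precisely the uniqueness step in (1): the separation $(4r) r^m$ from (4) is strictly smaller than $\diam K_w$, so (4) alone cannot rule out two points of $V_m$ sharing a single cylinder; the OSC must be combined with the sharp displacement estimate (3) at the same level, which is why I order the induction so that (3) is established before (1).
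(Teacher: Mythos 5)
Your proof is correct, and it actually supplies more detail than the paper at the crux. The paper dispatches property (1) with ``follow immediately by the design of the sets $V_m$,'' but uniqueness is not immediate: a priori the replacement $x_u$ of $\phi_u(v)$ could land in $K_u\cap K_w$ for some $w\ne u$ in $A^*(r^m)$, and the separation \eqref{e:separate} together with the displacement bound (3) only gives $|x_u-\phi_w(v)|>(6r)r^m$, which does not beat $\diam K_w<r^m$ when $r$ is small. Your SOSC argument --- that $x_u$ lies in the genuinely open ball $B(\phi_u(v),L_u\tau)=\phi_u(B(v,\tau))\subset\phi_u(U)$, which is disjoint from $K_w\subset\overline{\phi_w(U)}$ --- is exactly what is needed, and it is the reason you must reorder the proof as (3), (4), (1), (2), establishing (3) and (4) for the \emph{designated} replacements first and then deducing the stated, unrestricted versions once uniqueness in (1) is in hand. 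The paper instead proves (1)/(2), then (3) by a telescoping geometric sum $r^{m+1}+\cdots+r^N<2r^{m+1}$, then (4); your one-step estimate $(2r+1)r^{m+1}<(2r)r^m$ is a clean substitute. Two small remarks: the slogan ``two disjoint open sets, one of which is open'' is garbled (you mean: an open set disjoint from $B$ is disjoint from $\overline B$), and the claim $\phi_i(U)\cap\overline{\phi_j(U)}=\emptyset$ tacitly uses that $\phi_i(U)$ is open in $X$, which is automatic in $\R^n$ but not for a similarity of a general metric space; however, the paper's own derivation of \eqref{e:separate} (treating $\phi_w(B(v,\tau))$ as a ball in $X$) makes the same tacit assumption, so your argument is faithful to the framework. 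Within that framework your argument is sound and constitutes a useful completion of the paper's sketch.
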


\begin{proof}

The first claim and nesting property $V_m\subset V_{m+1}$ follow immediately by the design of the sets $V_m$. Suppose that $1\leq m\leq N-1$ and $x\in V_{m+1}$. By (1), there exists $w\in A^*(r^{m+1})$ such that $x\in K_w$, say $w=i_1\dots i_n \in A^n$. Set $w'=i_1\dots i_{n-1}$. Then $L_w < r^{m+1} \leq L_{w'}\leq L_w/L_1$, since $w\in A^*(r^{m+1})$. If $L_w < r^{m} \leq L_{w'}$, as well, then $w\in A^*(r^m)$, $x\in V_m$, and we take $x'=x$. Otherwise, $L_{w'}<r^m$. Choose $w''=i_1\dots i_l$, $l\leq n-1$ to be the shortest word such that $L_{w''}<r^m$. Then $w''\in A^*(r^m)$. By (1), there exists a unique $x'\in V_m\cap K_{w''}$. Then $|x-x'|_\infty \leq \diam K_{w''}<r^m$ by \eqref{e:ss-diam}. This establishes the second claim.

For the third claim, we first prove that for all $w\in A^*(r^m)$ and $x\in V_m\cap K_w$, \begin{equation} \label{e:golf} |x-\phi_w(v)| \leq \left\{\begin{array}{ll} r^{m+1} + \dots + r^{N} &\text{ if }m\leq N-1,\\ 0 &\text{ if } m=N,\end{array}\right.\end{equation} by backwards induction on $m$. Equation \eqref{e:golf} holds in the base case, because $V_N=Y_N$. Suppose for induction that we have established \eqref{e:golf} for some $2\leq m+1\leq N$, and let $w\in A^*(r^m)$ and $x\in V_m\cap K_w$. There exists $wu\in A^{*}(r^{m+1})$ such that $\phi_{w}(v)\in K_{wu}$. Also, by (2), there exists $y\in V_{m+1}\cap K_{wu}$. On one hand, $|\phi_{wu}(v)-\phi_w(v)|_{\infty} \leq \diam K_{wu}<  r^{m+1}$ by \eqref{e:ss-diam}. On the other hand, by the induction hypothesis, $|y-\phi_{wu}(v)|_{\infty}\leq r^{m+2}+\dots+ r^{N}$. Thus, since $x$ is by definition a point in $V_{m+1}$ that is nearest to $\phi_w(v)$, $$|x-\phi_w(v)| \leq |y-\phi_w(v)| \leq  r^{m+1} + r^{m+2}+\dots+r^{N}.$$ Therefore, \eqref{e:golf} holds for all $m$. Claim (3) follows, because $$r^{m+1}+\dots+r^{N} = r^{m+1} (1-r^{N-m})/(1-r)<2r^{m+1},$$ where the last inequality holds since $r<1/2$.

Finally, for the last claim, if $a,b\in V_m$ are distinct, say with $a\in K_w\cap V_m$ and $b\in K_u \cap V_m$ for some $w,u\in A^*(r^m)$, then by \eqref{e:separate},
\begin{align*}
|a-b|_{\infty} &\geq |\phi_w(v)-\phi_u(v)|_{\infty} - |a-\phi_w(v)|_{\infty} - |b-\phi_u(v)|_{\infty}\\ &> (8r) r^m - 2(2r^{m+1})= (4r) r^m. \qedhere
\end{align*}
\end{proof}

\subsection{Trees}
Next, we define a finite sequence of trees $T_m=(V_m,E_m)_{m=1,\dots,N}$ inductively, where the vertices $V_m$ were defined in the previous section and the edges $E_m$ will be specified below. By Lemma \ref{lem:Remmes1}, for all $m\in\{1,\dots,N\}$ and all $x\in V_m$, there exists a unique $w\in A^*(r^m)$ such that $x\in K_w$; we denote this word $w$ by $x(m)$.

Let $G_1 = (V_1,\hat{E}_1)$ be the graph whose edge set is given by $$\hat{E}_1= \{\{x,y\} : x\neq y\text{ and } K_{x(1)}\cap K_{y(1)} \neq \emptyset\}.$$
The connectedness of $K$ implies that $G_1$ is a connected graph, but not necessarily a tree. Now, removing some edges from $\hat{E}_1$, we obtain a new set $E_1$ so that $T_1 = (V_1,E_1)$ is a connected tree. Because we assumed $\mathcal{F}$ has branching, see \eqref{e:branching}, we may assume that $T_1$ has at least one branch point, i.e.~there exists $x \in V_1$ with valence in $T_1$ at least 3.

Suppose that we have defined $T_m=(V_m,E_m)$ for some $m \in \{1,\dots,N-1\}$. For each $x\in V_m$, let $V_{m+1,x} = V_{m+1}\cap K_{x(m)}$ and let $T_{m+1,x} = (V_{m+1,x},E_{m+1,x})$ be a connected tree such that $\{y,z\} \in E_{m+1,x}$ only if $y,z \in V_{m+1,x}$, $y\neq z$ and $K_{y(m+1)}\cap K_{z(m+1)} \neq \emptyset$. Moreover, since $K_{x(m)}$ is homothetic to $K$, we may require that $T_{m+1,x}$ has at least one branch point. Now, if $\{a,b\}\in E_m$, there exists $a' \in V_{m+1,a}$ and $b'\in V_{m+1,b}$ such that $K_{a'(m+1)}\cap K_{b'(m+1)} \neq \emptyset$. There is not a canonical choice, so we select one pair $\{a',b'\}$ for each pair $\{a,b\}$ in an arbitrary fashion. Set
\[ E_{m+1} := \bigcup_{x\in V_m}E_{m+1,x} \cup \bigcup_{\{a,b\} \in E_m} \{\{a',b'\}\}.\] This completes the definition of the trees $T_1,\dots,T_N$. Below all trees $T_m$ are realized in $\ell_{\infty}$ through the natural identification of $\{a,b\}\in E_m$ with the line segment $[a,b]$.

\begin{lem}[length of edges] \label{l:short} For all $m\in\N$, the length $|x-y|_\infty$ of each edge $[x,y]$ in $T_m$ is at least $(8r)r^m$ and less than $2r^m$.\end{lem}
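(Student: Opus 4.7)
The plan is to establish the two inequalities by separate, short arguments, both leveraging the careful way $T_m$ is assembled from the nets $V_m$.

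For the upper bound, I would first verify that every edge $[x,y]$ of $T_m$ connects vertices whose cylinders meet, that is, $K_{x(m)} \cap K_{y(m)} \neq \emptyset$ with $x(m) \neq y(m)$. This is immediate from the recursive definition of $E_m$: in the base case, $E_1$ is a subset of $\hat E_1$, whose edges are exactly pairs of distinct vertices with overlapping cylinders; for $m\geq 2$, the set $E_m$ is the union of the edge sets of the intra-cylinder trees $T_{m,x}$ (each of whose edges is restricted to pairs with overlapping cylinders by construction) together with lifts $\{a',b'\}$ of edges $\{a,b\}\in E_{m-1}$, where the pair $\{a',b'\}$ was chosen precisely so that $K_{a'(m)}\cap K_{b'(m)}\neq\emptyset$. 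Having checked this, I pick any $z\in K_{x(m)}\cap K_{y(m)}$ and combine the triangle inequality with the self-similar diameter bound from \eqref{e:ss-diam} to get
\[ |x-y|_\infty \leq |x-z|_\infty + |z-y|_\infty \leq \diam K_{x(m)} + \diam K_{y(m)} < 2r^m. \]

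For the lower bound, no new argument is needed: the endpoints of an edge of $T_m$ are distinct points of $V_m$, and the separation estimate Lemma \ref{lem:Remmes1}(4) applies directly. Unwinding that estimate, the lower bound is really the combination of the net separation $|\phi_{x(m)}(v)-\phi_{y(m)}(v)|_\infty \geq (8r)r^m$ from \eqref{e:separate} (which uses the strong open set condition via the choice $r=\tfrac14 L_1\tau$) with the anchor approximation $|x-\phi_{x(m)}(v)|_\infty < (2r)r^m$ from Lemma \ref{lem:Remmes1}(3); chaining these via the reverse triangle inequality gives the claimed lower bound up to the standard correction of two terms of size $(2r)r^m$.

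I do not anticipate a real obstacle here; the lemma is essentially a bookkeeping statement converting the tree structure into metric estimates. The one step that deserves careful writing is the inductive verification that every edge of $T_m$ does arise from a pair of overlapping cylinders, since this is what makes the upper bound proof available uniformly across the three different ways edges get introduced. Both bounds are sharp in order of magnitude and reflect the choice $r=\tfrac14 L_1\tau$: the upper bound captures the size of a single cylinder at generation $m$, and the lower bound captures the genuine separation of the net $V_m$ that was engineered by the recursive pullback construction.
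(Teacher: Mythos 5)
Your upper bound argument is exactly the paper's: one verifies that every edge of $T_m$ joins vertices of $V_m$ whose cylinders in $A^*(r^m)$ intersect (your inductive check of the three sources of edges is the right sanity check, though the paper simply asserts it ``by construction''), and then the triangle inequality together with \eqref{e:ss-diam} gives $|x-y|_\infty < 2r^m$.

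On the lower bound you are in fact more careful than the paper, and your care surfaces a small discrepancy in the stated constant. The paper's proof dismisses the lower bound with ``taken from \eqref{e:separate},'' but \eqref{e:separate} literally controls the distance between the anchor points $\phi_w(v)$ and $\phi_u(v)$ for distinct $w,u\in A^*(r^m)$. For $m<N$, the endpoints $x,y$ of an edge in $T_m$ belong to $V_m\subset V_N=Y_N$ and are generally \emph{not} of the form $\phi_w(v)$ with $w\in A^*(r^m)$; they are only within $(2r)r^m$ of the corresponding anchors by Lemma~\ref{lem:Remmes1}(3). Chaining, as you do,
\[
|x-y|_\infty \;\geq\; |\phi_{x(m)}(v)-\phi_{y(m)}(v)|_\infty \;-\; |x-\phi_{x(m)}(v)|_\infty \;-\; |y-\phi_{y(m)}(v)|_\infty \;>\; (8r)r^m - 2(2r)r^m = (4r)r^m,
\]
which is precisely Lemma~\ref{lem:Remmes1}(4). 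So the available lower bound is $(4r)r^m$, not $(8r)r^m$, except at the final level $m=N$ where $V_N=Y_N$ and \eqref{e:separate} applies directly. Thus the constant $(8r)$ in the statement of Lemma~\ref{l:short} should read $(4r)$ to be uniform in $m$. You implicitly flagged this with your phrase about the ``correction of two terms of size $(2r)r^m$''; it would be cleaner to state outright that the lower bound you obtain is $(4r)r^m$. The overstatement is harmless downstream --- the only place a lower bound on edge lengths enters is (P4) of Lemma~\ref{prelim}, which needs only $(2r)r^m$ --- but your version is the correct one.
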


\begin{proof} By construction, for each edge $[x,y]$ in $T_m$, we have $K_{x(m)}\cap K_{y(m)}\neq\emptyset$. Hence $|x-y|_\infty \leq \diam K_{x(m)}+\diam K_{y(m)} < 2r^m$ by \eqref{e:ss-diam}. The lower bound on the length is taken from \eqref{e:separate}. \end{proof}

\subsection{Parameterization of $T_N$ and the map $F_N$}\label{sec:Remes2}
For each $1\leq m\leq N$, we denote by $T_{N,m}$ the minimal subgraph of $T_N$ that contains $V_m$. See Figure \ref{fig:subtree}. Clearly, $T_{N,m}$ is a connected subtree of $T_{N,n}$ whenever $m\leq n$ and $T_{N,N}=T_N$.

\begin{figure}[th]
\begin{center}\includegraphics[width=.23\textwidth]{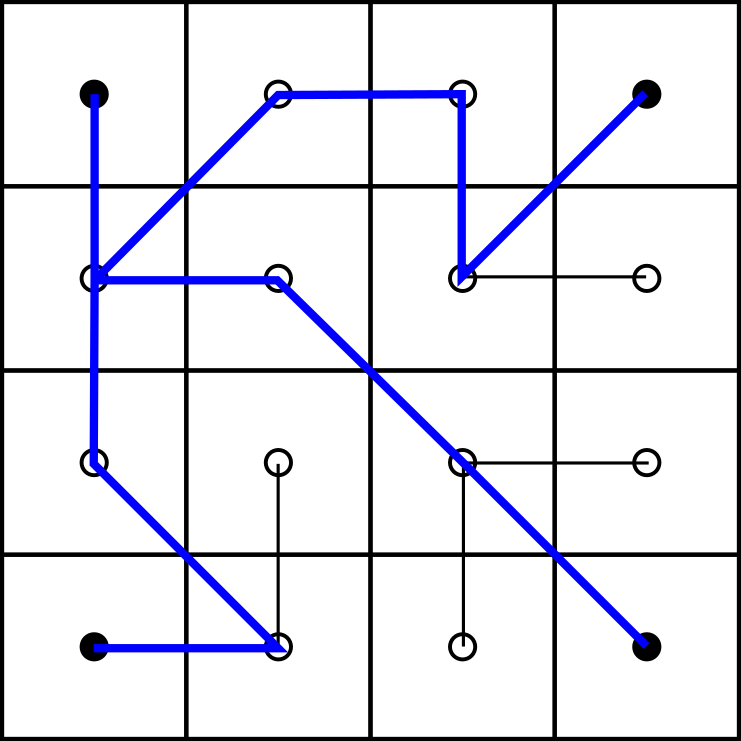}\end{center}
\caption{The tree $T_{2,1}$ (in blue) for the IFS for the square in Fig.~3.}\label{fig:subtree}\end{figure}

\begin{lem}[intermediate parameterizations] \label{prelim} There exists a constant $c>0$ depending only on $L_1,s,\tau,C_1,C_2$, and there exists a collection $\mathscr{E}_m$ of closed nondegenerate intervals in $[0,1]$ and a continuous map $f_m:[0,1]\rightarrow T_N$ for each $1\leq m\leq N$ with the following properties.
\begin{enumerate}
\item[{(P1)}] The intervals in $\mathscr{E}_m$ have mutually disjoint interiors and their union $\bigcup\mathscr{E}_m = [0,1]$.
\item[{(P2)}] The map $f_m$ is a 2-to-1 piecewise linear tour of edges of a subtree $\widetilde{T}_{N,m}$ of $T_N$ containing $T_{N,m}$.
\item[{(P3)}] For every $I=[a,b]\in\mathscr{E}_m$, we have the image of the endpoints $f_m(a),f_m(b)$ are vertices of $\widetilde{T}_{N,m}$.
\item[{(P4)}] For every $I=[a,b]\in\mathscr{E}_m$ and $x\in[a,b]$,
 \[ (2r)r^m\leq |f_m(a)-f_m(b)|_\infty \leq (4/r)r^m\quad\text{and}\quad |f_m(a)-f_m(x)|_\infty\leq (5/r)r^m.\]
\item[{(P5)}] For all $1\leq m\leq N-1$ and for every $I\in\mathscr{E}_m$, we have $f_{m+1}|\partial I=f_m|\partial I$ and $f_m(I) \subset f_{m+1}(I)$.
\item[{(P6)}] For all $1\leq m\leq N$ and $I\in\mathscr{E}_m$, $f_N|I$ tours at least $c r^{-(N-m)s}$ edges in $T_N$.
\item[{(P7)}] When $m=N$, $\widetilde{T}_{N,N}=T_{N,N}=T_N$ and $f_N(I)$ is an edge in $T_N$ for each $I\in\mathscr{E}_N$.
\end{enumerate}
\end{lem}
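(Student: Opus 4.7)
The plan is to construct $(f_m, \mathcal{E}_m, \widetilde{T}_{N,m})$ by induction on $m \in \{1, \dots, N\}$, arranging that each $\mathcal{E}_{m+1}$ refines $\mathcal{E}_m$ (so boundaries of level-$m$ intervals persist) and each $f_{m+1}$ agrees with $f_m$ at the boundary points of $\mathcal{E}_m$-intervals. At every step $f_m$ will be realized as a $2$-to-$1$ Eulerian tour of a subtree $\widetilde{T}_{N,m}$ of $T_N$ containing $T_{N,m}$, and the refinement from $f_m$ to $f_{m+1}$ will be achieved by inserting, at every visit to a vertex $v \in V_m$, a ``local detour'' through part of the doubled subtree $T_{m+1, v}^{(2)}$, stitched together via the chosen cross-edges $\{a', b'\} \in E_{m+1}$ that refine the macro-edges $\{a, b\} \in E_m$.

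For the base case $m = 1$, observe that $T_{N,1}$ is itself a tree, so its doubling $T_{N,1}^{(2)}$ has all even degrees and admits an Eulerian circuit $C_1$. Recording the times $C_1$ passes through a vertex of $V_1$ decomposes $C_1$ into $2|E_1|$ consecutive macro-segments, each being the unique $T_{N,1}$-path between two $V_1$-vertices adjacent in $T_1$. Partition $[0,1]$ into $2|E_1|$ intervals $\mathcal{E}_1$ in the order along $C_1$, and let $f_1|_I$ traverse the corresponding macro-segment linearly; set $\widetilde{T}_{N,1} := T_{N,1}$. Properties (P1)--(P3) are immediate, while (P4) follows because the endpoints of each $I$ map to $V_1$-vertices adjacent in $T_1$, so $(4r)r \le |f_1(a_I) - f_1(b_I)|_\infty < 2r$ by Lemma \ref{l:short} and Lemma \ref{lem:Remmes1}(4), and each macro-segment lies in $K_{a_I(1)} \cup K_{b_I(1)}$, a set of diameter at most $2r < (5/r)r$.

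For the inductive step, fix $v \in V_m$. The tour $f_m$ visits $v$ exactly $d_v := \val(v, T_m)$ times; each visit is bracketed by a predecessor and successor macro-edge of $T_m$, whose associated cross-edges in $E_{m+1}$ designate an entry endpoint $v'_{i,\mathrm{in}}$ and an exit endpoint $v'_{i,\mathrm{out}}$ in $V_{m+1, v}$ for the $i$-th visit. The crux of the refinement is the combinatorial claim that $T_{m+1, v}^{(2)}$ admits a decomposition into $d_v$ edge-disjoint walks, the $i$-th walk going from $v'_{i,\mathrm{in}}$ to $v'_{i,\mathrm{out}}$. One achieves this by adjoining $d_v$ auxiliary edges pairing $(v'_{i,\mathrm{in}}, v'_{i,\mathrm{out}})$ -- the augmented multigraph remains connected with all even degrees -- taking an Eulerian circuit, and removing the auxiliary edges. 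With such decompositions in hand for every $v \in V_m$, refine each $I \in \mathcal{E}_m$ (with macro-segment from $a$ to $b$ via its chosen cross-edge $\{a', b'\}$) by subdividing it into sub-intervals tracking the path $a \leadsto a' \to b' \leadsto b$, where the initial and terminal legs use the appropriate walks in $T_{m+1,a}^{(2)}$ and $T_{m+1,b}^{(2)}$ for this particular visit; define $f_{m+1}|_I$ to trace this path linearly, collect all sub-intervals as $\mathcal{E}_{m+1}$, and let $\widetilde{T}_{N,m+1}$ be the subtree swept by the refined tour.

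The verification of (P1)--(P7) at level $m+1$ is then routine: (P5) follows because boundary points of $\mathcal{E}_m$-intervals are preserved under subdivision and $f_{m+1}$ strictly extends $f_m$'s itinerary within each $I$; (P4) uses the same cylinder-containment argument as the base case, since the refined path lies inside a union of two adjacent level-$(m+1)$ cylinders; (P7) holds since at $m = N$ each sub-interval corresponds to a single edge of $T_N$; and (P6) follows by iterating the refinement $N - m$ times and invoking Lemma \ref{6} together with the Ahlfors $s$-regularity \eqref{e:sreg} of $K$, which gives a lower bound $\gtrsim r^{-(N-m)s}$ on the number of $T_N$-edges swept per level-$m$ interval, with the implicit constant $c$ depending on $L_1, s, \tau, C_1, C_2$. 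The main obstacle is the combinatorial decomposition in the inductive step: the $d_v$ walks at each $v \in V_m$ must be chosen consistently with the cyclic ordering of cross-edges around $v$ induced by the tour of $f_m$, so that the globally stitched refinement remains a valid $2$-to-$1$ Eulerian tour of a subtree of $T_N$; this amounts to a careful pairing of exit points in an augmented Eulerian multigraph and is the most delicate bookkeeping in the argument.
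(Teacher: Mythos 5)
The central gap is in your treatment of (P6), which is precisely the hardest part of the lemma and the heart of Remes' method. You claim (P6) ``follows by iterating the refinement $N-m$ times and invoking Lemma~\ref{6} together with the Ahlfors $s$-regularity \eqref{e:sreg},'' but your Eulerian-tour refinement does not give a uniform per-interval lower bound. When you partition $T_{m+1,v}^{(2)}$ into $d_v$ edge-disjoint walks by adding auxiliary edges, taking an Eulerian circuit, and deleting the auxiliary edges, nothing prevents two auxiliary edges from being adjacent in the circuit, in which case one of the resulting walks is empty; more generally the $\approx r^{-s}$ edges of $T_{m+1,v}^{(2)}$ can be split very unevenly among the $d_v$ visits. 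This unevenness compounds over levels, so the number of $T_N$-edges accumulated in $f_N(I)$ for an unlucky chain of intervals $I\supset I'\supset\cdots$ may grow far more slowly than $r^{-(N-m)s}$; Ahlfors regularity only controls the average, not the minimum. The paper avoids this by deliberately enlarging the tour at every level with Remes' branching lemma (Lemma~\ref{lem:Remes}): for each interval $I\in\mathscr{E}_m$ with $m\le N-t$ it finds an as-yet-untraced branch of $T_N$ relative to the road $R_N(f_m(\partial I))$ that contains a full vertex set $K_w\cap V_N$ for some $w\in A^*(r^{m+t})$, and inserts a two-way traversal of that branch into $f_m|I$. That guarantees the reduced property (P$6'$) from which (P6) follows via the cardinality estimate in Lemma~\ref{6}. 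Establishing that such an untraced, sufficiently rich branch always exists is exactly where the hypothesis $\Haus^s(K)>0$ (through $s$-regularity) is used, and is where the constants $C$, $C'$, $t$ of Lemma~\ref{lem:Remes} come from; this ingredient is entirely absent from your proposal.

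Two further structural differences are worth flagging. First, your $\widetilde T_{N,m}$ is always $T_{N,m}$, whereas the paper's $\widetilde T_{N,m}$ must be strictly larger --- it includes the inserted branches --- and (P2) explicitly allows this. Second, your $\mathscr{E}_m$ is indexed by the $2|E_m|$ traversals of abstract edges of $T_m$, while the paper builds $\mathscr{E}_m$ by a distance-driven subdivision (Lemma~\ref{rem:Ppoints}) so that (P4) holds by construction; these partitions do not coincide at intermediate levels, and the distance-driven version is what makes the H\"older estimate \eqref{eq:RemHold2} go through. The Eulerian detour bookkeeping you flag as ``the most delicate part'' is in fact the easy part; the delicate part you skipped is ensuring, via Lemma~\ref{lem:Remes}(1)--(3), that there is always a fresh branch to insert.
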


We now show how to use Lemma \ref{prelim} to construct a $(1/s)$-H\"older continuous surjection $F_N:[0,1]\rightarrow T_N$ with $\Hold_{1/s} F_N \lesssim_{L_1,s,\tau,C_1,C_2} 1$ and $d_H(F_N([0,1]),K)\lesssim_{L_1,\tau} r^N$, where $d_H$ is the Hausdorff distance in $\ell_\infty$. This reduces the proof of Proposition \ref{prop:Remes} to verification of Lemma \ref{prelim}.

First of all, by (P2) and (P7), $\card \mathscr{E}_N= 2(\card(V_N)-1)$. Let $\psi:[0,1] \to [0,1]$ be the unique continuous, nondecreasing function such that
$\psi|I$ is linear and $|\psi(I)|=(\card\mathscr{E}_N)^{-1}$ for all $I\in\mathscr{E}_N$. Let $F_N:[0,1] \to T_N$ be the unique map satisfying $f_N = F_N\circ \psi$ (i.e.~$F_N:=f_N\circ \psi^{-1}$). Thus, $F_N$ is a 2-to-1 piecewise linear tour of the edges of $T_N$ in the order determined by $f_N$, where the preimage of each edge has equal length. By (P2), (P7), the definition of the set $V_N$, and the fact that $|x-y|\leq 2r^N$ for any two adjacent vertices of $T_N$,
\begin{equation}
d_H(F_N([0,1]),K) = d_H(T_N,K) \leq d_H(T_N,V_N) + d_H(V_N,K) \leq 3r^N.
\end{equation}
It remains to show that $F_N$ is $(1/s)$-H\"older with H\"older constant independent of $N$.

To that purpose, we define an auxiliary sequence $F_N^1,\dots,F_N^N\equiv F_N$ to which we can apply Corollary \ref{c:LipHold}. As already noted, we simply set $F_N^N:=F_N$. Next, suppose that $1\leq m\leq N-1$. Let $\mathscr{N}_m=\{a_1,a_2,\dots,a_l\}$ denote the set of endpoints of intervals in $\mathscr{E}_m$, enumerated according to the orientation of $[0,1]$. Let $\tilde f_j:[0,1]\rightarrow\ell_\infty$ be defined by linear interpolation and the rule $\tilde f_j(a_i)=f_j(a_i)$ for all $i$. We then let $F_N^j$ be the unique map such that $\tilde f_j=F_N^j\circ \psi = f_j$ (i.e.~ $F_N^j:=\tilde f_j\circ \psi^{-1}$). By (P3), (P4) and (P5), for all $1\leq j\leq N-1$,
\begin{equation}\label{eq:RemHold1}
|F_N^j(x) - F_{N}^{j+1}(x)|_{\infty} \lesssim_{L_1,\tau} r^{j}\quad\text{for all }x\in[0,1].
\end{equation}

Next, we claim that for all $1\leq j\leq N$ and all $x,y\in [0,1]$,
\begin{equation}\label{eq:RemHold2}
|F_N^j(x) -F_{N}^{j}(y)|_{\infty} \lesssim_{L_1,s,\tau,C_1,C_2} r^{j(1-s)}|x-y|.
\end{equation} Since each map $F_N^j$ is continuous and linear on each interval $\psi(I)$, $I\in\mathscr{E}_j$, the Lipschitz constant is given by $$\Lip(F_N^j) = \max_{I\in\mathscr{E}_j} \frac{\diam F_N^j(\psi(I))}{|\psi(I)|}\lesssim_{L_1,\tau} \max_{I\in\mathscr{E}_j}\frac{r^j}{|\psi(I)|}$$ by (P3). Fix $I\in\mathscr{E}_j$. To estimate $|\psi(I)|$,
by (P6), (P7), and Lemma \ref{6} we have
\begin{equation*}|\psi(I)| = \frac{\card\{J \in \mathscr{E}_N : J\subset I \}}{2(\card(V_N)-1)}  \gtrsim_{L_1,s,\tau,C_1,C_2} \frac{r^{-(N-j)s}}{r^{-Ns}} \gtrsim_{L_1,s,\tau,C_1,C_2} r^{js}.\end{equation*}
Thus, we have established \eqref{eq:RemHold2}.

Therefore, by (\ref{eq:RemHold1}), (\ref{eq:RemHold2}), and Corollary \ref{c:LipHold}, $F_N\equiv F_N^N$ is a $(1/s)$-H\"older map with H\"older constant depending only on $L_1,s,\tau,C_1,C_2$. This completes the proof of Proposition \ref{prop:Remes} and Theorem \ref{thm:remes}, up to verifying Lemma \ref{prelim}.

\subsection{Remes' Branching Lemma and the Proof of Lemma \ref{prelim}}

We now recall a key lemma from Remes \cite{Remes}, which lets us build the intermediate parameterizations in Lemma \ref{prelim}. In the remainder of this section, we frequently use the following notation and terminology. Given $a,b \in V_m$ with $a\neq b$, we let $R_m(a,b)$ denote the unique arc (the ``road") in $T_m$ with endpoints $a$ and $b$. A \emph{branch $B$ of $T_m$ with respect to $R_m(a,b)$} is a maximal connected subtree of $T_m$ with at least two vertices such that $B$ contains precisely one vertex $x$ in $R_m(a,b)$ and $x$ is terminal in $B$ (i.e.~ $x$ has valency 1 in $B$). See Figure \ref{fig:branch}. More generally, if $T$ is a connected tree and $S$ is a connected subtree of $T$, we define a \emph{branch $B$ of $T$ with respect to $S$} to be a  maximal connected subtree of $T$ with at least two vertices such that $B$ contains precisely one vertex $x$ in $S$, and $x$ is terminal in $B$.

\begin{figure}[th]
\begin{center}\includegraphics[width=.23\textwidth]{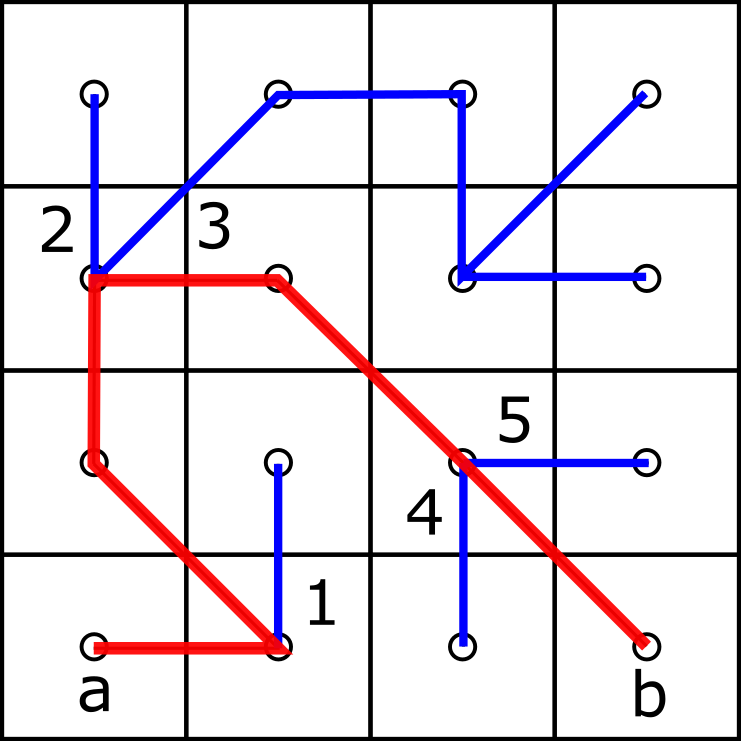}\end{center}
\caption{A road $R_2(a,b)$ (in red) and the 5 branches in $T_2$ with respect to $R_2(a,b)$ (in blue) for the IFS for the square in Fig.~3. Branches 2 and 3  contain a point in $V_1\setminus R_2(a,b)$; branches 1, 4, and 5 do not.}\label{fig:branch}
\end{figure}

\begin{lem}[Remes' branching lemma {\cite[Lemma 4.11]{Remes}}]\label{lem:Remes}
Let $a,b\in V_N$ with $a\neq b$ and let $R \subset V_N$ be the set of vertices of $R_N(a,b)$. Suppose that there exists $m\leq N$ such that $|a-b|_{\infty}\geq (2r) r^m$ and $|a-x|_{\infty} \leq (4/r)r^m$ for all $x\in R$.
\begin{enumerate}
\item \emph{Control on number of branches from above}: There exists $C\geq 1$ depending only on $L_1,s,\tau,C_1,C_2$ such that the number of the branches of $T_N$ with respect to $R_N(a,b)$ containing points in $V_m\setminus R$ is less than $C$.
\item \emph{Control of the road length}: There exists $C'\geq 1$ depending only on $L_1,s,\tau, C_1,C_2$ such that if $S=\{z_1,\dots,z_l\}$ is a subset of $R$, enumerated relative to the ordering induced by $R_N(a,b)$, and $|z_i-z_{i+1}|_\infty\geq (2r) r^m$ for all $1\leq i\leq l-1$, then $l\leq C'$.
\item \emph{Control on number of branches from below}: There is $t\in\N$ depending only on $L_1,\tau,s,C_1,C_2$ such that if $m\leq N-t$, then the number of branches of $T_N$ with respect to $R_N(a,b)$ that contain some vertex in $V_{m+t} \setminus R$ is at least $2C+C'+2$. Moreover, if $c\in V_{m+t}\setminus R$ is such a vertex and $c'\in K_{x_{t+m}(c)}$, then $c$ and $c'$ belong to the same branch of $T_N$ with respect to $R_N(a,b)$.
\end{enumerate}
\end{lem}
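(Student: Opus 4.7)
The plan is to establish the three claims sequentially, with parts (1) and (2) relying on Ahlfors $s$-regularity of $K$ via \eqref{e:sreg} together with the separation properties of the sets $V_m$ from Lemma~\ref{lem:Remmes1}, and part (3) exploiting the branching hypothesis \eqref{e:branching} through self-similarity.

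For part (2) I would argue by a direct packing/mass estimate. By hypothesis the $z_1,\dots,z_l$ are pairwise $\ell_\infty$-separated by $(2r)r^m$ and contained in $\overline{B}(a,(4/r)r^m)$. Place the disjoint open balls $B(z_i, r r^m)$; each meets $K$ with $\mathcal{H}^s$-mass at least $C_1 (r r^m)^s$ by \eqref{e:sreg}, while their union sits in $\overline{B}(a,(5/r)r^m)$, whose intersection with $K$ has $\mathcal{H}^s$-mass at most $C_2 ((5/r)r^m)^s$. Summing the lower bounds forces $l \leq (C_2/C_1)(5/r)^s r^{-s}$, an admissible $C'$.

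For part (1) I would proceed by a similar counting, preceded by a localization step. Associate to each branch $B$ containing a vertex $v \in V_m \setminus R$ the level-$m$ cylinder $K_{v(m)}$; distinct branches yield distinct cylinders by Lemma~\ref{lem:Remmes1}(1). The crucial preparatory claim is that, choosing $v_B \in V_m \cap B$ to minimize the graph distance in $T_N$ to the attachment vertex $x_B \in R$, the cylinder $K_{v_B(m)}$ intersects a level-$m$ cylinder meeting $R$ and hence lies in $\overline{B}(a,(5/r)r^m)$. To see this, project the $T_N$-path from $x_B$ to $v_B$ to a chain of pairwise-intersecting (or equal) level-$m$ cylinders from $K_{x_B(m)}$ to $K_{v_B(m)}$. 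A short induction on $m$ in the recursive construction of the $T_m$'s shows that $T_N$ restricted to $V_N \cap K_w$ is a connected subtree for every $w \in A^*(r^m)$; therefore the unique $V_m$-representative of any intermediate cylinder in the chain is reachable from the path without leaving that cylinder. A case analysis on whether such a representative lies in $R$ or not shows that otherwise $v_B$ was not the closest $V_m$-vertex in $B$ to $x_B$, contradicting our choice. Hence the chain has length at most two, and an Ahlfors-regularity count then bounds the number of candidate cylinders by the desired $C$.

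For part (3) I would use \eqref{e:branching} together with self-similarity. Inside each level-$m$ cylinder $K_w$ the sub-cylinders $K_{wi_1}, K_{wi_2}, K_{wi_3}, K_{wi_4}$ realize the same branching pattern, so $K_w$ contains at least three level-$(m+1)$ sub-cylinders branching outward from $K_{wi_1}$. Iterating $t$ times produces at least $3^t$ level-$(m+t)$ sub-cylinders of $K_w$ with pairwise distinct branching structure. The number of these sub-cylinders contaminated by the road is bounded by a constant independent of $t$: a $(2r)r^{m+t}$-separated subset of $R \cap K_w$ has size at most $C'$ by part (2) applied at scale $r^{m+t}$, and each such representative lies in a bounded number of level-$(m+t)$ cylinders by Ahlfors regularity. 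Choosing $t$ large enough (depending only on $C, C', L_1, s, C_1, C_2$) makes $3^t$ dwarf the contamination count, yielding at least $2C + C' + 2$ branches of $T_N$ with respect to $R_N(a,b)$ whose $V_{m+t}$-representatives lie off the road. The ``moreover'' clause follows from the same connectivity induction: choosing $c$ so that $K_{c(m+t)}$ contains no road vertex, the subtree of $T_N$ restricted to $V_N \cap K_{c(m+t)}$ is connected and disjoint from $R$, so every $V_N$-vertex $c' \in K_{c(m+t)}$ lies in the same branch as $c$.

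The main obstacle will be the case analysis in the localization step of part (1): handling intermediate cylinders whose $V_m$-representative happens to lie on the road, and showing that one can shrink the chain each time to produce a strictly closer $V_m$-vertex in $B$. This combinatorial accounting is the technical core of Remes' argument, and making it fully rigorous requires careful bookkeeping of how the road interacts with each cylinder appearing in the chain.
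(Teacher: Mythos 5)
There is a genuine gap in your part~(2). The hypothesis gives only that \emph{consecutive} points $z_i,z_{i+1}$ of $S$ are $(2r)r^m$-separated in $\ell_\infty$; it does not give pairwise separation of all of $z_1,\dots,z_l$. Since the road $R_N(a,b)$ is an arc in the abstract tree $T_N$ but can fold back on itself in $\ell_\infty$, two non-consecutive $z_i,z_j$ can be as close as $(4r)r^N$. Hence the balls $B(z_i,rr^m)$ you place need not be disjoint, and the packing/mass bound does not follow. The paper circumvents this by exploiting the arc structure: since $(2r)r^m>r^{m+1}$, consecutive points of $S$ lie in distinct level-$(m+1)$ cylinders, and because a geodesic arc in a tree cannot re-enter a level-$(m+1)$ cylinder (via the branch-preserving property), \emph{all} the $z_i$ lie in distinct level-$(m+1)$ cylinders $K_{w_i}$. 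These cylinders (not balls around the $z_i$) are then the disjoint sets to which Ahlfors regularity is applied.

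Part~(3) also takes a route that I do not see going through as written. You attempt to manufacture many branches by subdividing a single level-$m$ cylinder $K_w$ into $3^t$ sub-cylinders ``with pairwise distinct branching structure,'' but branches of $T_N$ are defined \emph{with respect to $R_N(a,b)$}, and it is not clear how the road passes through a fixed $K_w$ or that these sub-cylinders yield distinct branches off the road rather than off each other. The paper's argument is different and more direct: since $|a-b|_\infty\geq(2r)r^m$ and level-$(m+t')$ cylinders have diameter $<r^{m+t'}$ and are visited at most once by the road, the road must already pass through at least $C''=2C+C'+1$ level-$(m+t')$ cylinders for an appropriate $t'\simeq 1$; then the branching hypothesis \eqref{e:branching} supplies, at each such point of $V_{m+t'}\cap R$, a level-$(m+t'+1)$ sub-cylinder whose $V_N$-vertices form a branch off the road (via branch-preservation). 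The length hypothesis $|a-b|_\infty\geq(2r)r^m$ is essential here and your argument never invokes it.

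Your part~(1) is closest to the paper. The connectivity statement you prove by induction (``$T_N$ restricted to $V_N\cap K_w$ is a connected subtree for $w\in A^*(r^m)$'') is correct and is precisely the content of the branch-preserving property that the paper cites from Remes, so re-deriving it is legitimate. However, the claim that the chain of level-$m$ cylinders from the attachment vertex to your minimizer $v_B$ has length at most two is not what is needed and I do not see that it follows from minimality of graph distance. The paper's cleaner localization is: once the path from $z_B$ leaves the level-$m$ cylinder containing $z_B$, it enters some $K_u$ with $u\in A^*(r^m)$; by branch-preservation all of $V_N\cap K_u$, including the unique $V_m$-representative, lies in $B\setminus R$, and that representative is within $2r^m$ of $z_B$; minimality of $x_B$ (in $\ell_\infty$ distance, not graph distance) then gives $|x_B-z_B|_\infty\leq 2r^m$ without any claim about chain length. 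Your subsequent packing count is then correct in spirit, subject to the same caveat as in~(2) that disjointness must come from Lemma~\ref{lem:Remmes1}(4), which it does here since the $x_B$ are in $V_m$.
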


\begin{proof} From the inductive construction, it is easy to see that the trees $T_1,\dots,T_N$ satisfy the following property, which Remes calls the \emph{branch-preserving property}:

\begin{quotation}\cite[p.~23]{Remes} Let $1\leq m\leq n\leq N$, let $x_1,x_2 \in V_m$, let $B$ be a branch of $T_m$ with respect to $R_m(x_1,x_2)$, and let $x_3$ be a vertex of the branch $B$. Let $x_1',x_2' \in V_n$ with $x_1'\in K_{x_1(m)}$ and $x_2' \in K_{x_2(m)}$. Then all vertices in $V_n\cap K_{x_3(m)}$ belong to the same branch of $T_n$ with respect to $R_n(x_1',x_2')$.
\end{quotation}
Since we arranged for the attractor in our setting to satisfy \eqref{e:sreg}, the proof of Lemma \ref{lem:Remes} follows exactly as the proof of \cite[Lemma 4.11]{Remes} in Euclidean space. This is the only place in the proof of Theorem \ref{thm:remes} where we use the assumption that $\Haus^s(K)>0$.

(1) Denote by $\mathcal{B}$ the set of branches of $T_N$ with respect to $R_N(a,b)$ containing points in $V_m \setminus R$. Let $B \in \mathcal{B}$ and let
$z_B$ be the common vertex of the road and the branch $B$. Among all vertices in $B\cap(V_m\setminus R)$ choose $x_B\in B\cap (V_m\setminus R)$ that minimizes $|x_B-z_B|_\infty$.

We claim that $|x_B-z_B|_{\infty}\leq 2r^m$. To prove the claim, note first that if $|z_B-y|_{\infty}>r^m$ for any vertex $y\in B\cap V_N$, then $z_B$ and $y$ belong to two different sets $K_w$, $K_u$, respectively, with $w,u\in A^{*}(r^m)$. By design of $T_N$ and the branch-preserving property, we have that $V_N\cap K_w \subset B$, because the minimal connected subgraph containing those vertices contains no other vertices. Because one of those vertices belongs to $V_m$, we get the claim.

By the claim above and the assumption $|a-z_B|_{\infty}\leq (4/r)r^m$,
we obtain $|a-x_B|_{\infty} \leq (2+4r^{-1})r^m$ for all $B\in\mathcal{B}$.
By Lemma \ref{lem:Remmes1}(4),
the balls $B(x_B,(2r)r^{m})$ are mutually disjoint.
Since $2r<1$, we have $B(x_B,(2r)r^{m}) \subset B_0 := B(a,(3+4r^{-1})r^m)$ for all $B\in\mathcal{B}$. Applying \eqref{e:sreg} twice,
\begin{align*}
(3+4r^{-1})^sr^{ms} \geq C_2^{-1} \mathcal{H}^s(K\cap B_0) &\geq C_2^{-1}\sum_{\substack{B\in \mathcal{B} }}\mathcal{H}^s(K \cap B(x_B,(2r)r^m))\\
&\geq C_1 C_2^{-1} \card(\mathcal{B})(2r)^sr^{ms}
\end{align*}
and we obtain that $\card(\mathcal{B}) \leq C_1^{-1} C_2(3+4r^{-1})^s(2r)^{-s} \lesssim_{L_1,s,\tau,C_1,C_2} 1$.

(2) The proof is similar to that of (1). If $z_i,z_{i+1}$ are as in (2), then
\[ |z_i-z_{i+1}|_{\infty} \geq (2r) r^m > r^{m+1},\]
so there exist distinct $w_i,w_{i+1}\in A^*(r^{m+1})$ (if $m+1 \leq N$) or distinct $w_i,w_{i+1}\in A^*(r^{N})$ (if $m+1 > N$) such that $z_i\in K_{w_i}$ and $z_{i+1}\in K_{w_{i+1}}$.
Because $T_N$ is a tree, it follows that if $i,j\in\{1,\dots,l\}$ with $i\neq j$, then $w_i \neq j$. Therefore, all $z_1,\dots,z_l$ belong to different sets $K_{w_1},\dots,K_{w_l}$. Now we can use \eqref{e:sreg} and work as in (1) to obtain an upper bound for $l$.

(3) Set $C'' = 2C+C'+1$ and set $t' = \lceil \log_{r}\left(2r/C''\right)\rceil$. Because $|a-b|_{\infty}\geq (2r) r^m \geq C'' r^{m+t'}$, the road $R_N(a,b)$ contains at least $C''$ elements of $V_{m+t'}$. Since $\mathcal{F}$ has branching (recall \eqref{e:branching}), there exist at least $C''$ branches of $T_{N,m+t'}$ with respect to $R_{N}(a,b)$. By the branch-preserving property, for each such branch, there exists $w\in A^*(r^{m+t'+1})$ such that the said branch contains all vertices in $V_N\cap K_w$. Thus, we may take $t = \lceil \log_{r}\left(2r/C''\right)\rceil+1$, which ultimately depends at most on $L_1,\tau,s,C_1,C_2$, and (3) holds.
\end{proof}

With Remes' branching lemma (Lemma \ref{lem:Remes}) in hand, we devote the remainder of this section to a proof of Lemma \ref{prelim}. Throughout what follows, we let $t\simeq_{L_1,\tau,s,C_1,C_2}1$ denote the integer given by Lemma \ref{lem:Remes}(3). Instead of proving (P1)--(P7), it is enough to prove (P1)--(P5), (P7), and the following property:
\begin{enumerate}
\item[\emph{(P$6'$)}] \emph{For $1 \leq m\leq N-t$ and $I\in\mathscr{E}_m$, there exists $w\in A^*(r^{m+t})$ such that $f_m(I)$ traces the vertices of $K_w\cap V_N$.}
\end{enumerate} Indeed, let us quickly check that (P6) follows from (P5) and (P$6'$). Fix $m\in\{1,\dots,N\}$ and $I\in \mathscr{E}_m$. Suppose first that $m \leq N-t$. Then, by (P$6'$), $f_m(I)$ is a connected subtree of $T_N$ that contains $V_N\cap K_w$ for some $w\in A^*(r^{m+t})$. Working as in Lemma \ref{6} we get $\card(V_N\cap K_w) \gtrsim_{L_1,s}r^{-(N-m)s}$. So $f_m(I)$ contains at least $cr^{-(N-m)s}$ edges of $T_N$ for some $c\gtrsim_{L_1,s} 1$. Now, by (P5), $f_m(I) \subset f_N(I)$ and (P6) follows when $m \leq N-t$. Suppose otherwise that $m>N-t$. Then $f_m(I)$ contains at least one edge of $T_N$ and $1=r^{-ts}r^{ts}\simeq_{L_1,\tau,s,C_1,C_2} r^{-ts} \geq r^{-(N-m)s}$ and (P6) follows when $t>N-m$.

The construction of the intervals $\mathscr{E}_m$ and the maps $f_m$ satisfying (P1)--(P5) and (P$6'$) is in an inductive manner. We verify (P7) after the construction of the final map $f_N$.

\subsubsection{Initial step.}\label{sec:step1}
Define a collection of nondegenerate closed intervals $\mathscr{E}_1$ as well as auxilliary map $g_1: [0,1] \to T_{N,1}$ so that the following properties hold.
\begin{enumerate}
\item The intervals in $\mathscr{E}_1$ have mutually disjoint interiors and $\bigcup\mathscr{E}_1 = [0,1]$.
\item The map $g_1$ is a 2-to-1 piecewise linear tour of edges of $T_{N,1}$.
\item For each $I \in \mathscr{E}_1$, $g_1$ maps the endpoints of $I$ onto two vertices in $V_1$ and maps $I$ piecewise linearly onto the road  that joins the two vertices in $T_{N,1}$.\end{enumerate}

If $N-t<1$, we simply set $f_1=g_1$ and proceed to the inductive step. Otherwise, $1\leq N-t$ and to define $f_1$, we modify  the map $g_1$ on each interval in $\mathscr{E}_1$ by inserting branches. Let $\{I_1,\dots,I_{n}\}$ be an enumeration of $\mathscr{E}_1$. Let $C$ be as in Lemma \ref{lem:Remes}(1).

\begin{lem}\label{lem:branch11}
Let $I_1 = [x,y]$, $a=g_1(x)$ and $b=g_1(y)$. Let $\{B_1,\dots,B_p\}$ be the branches of $T_N$ with respect to the road $R_N(a,b)$ that contain a set $K_w\cap V_N$ for some $w\in A^*(r^{t+1})$. There exist at most $C$ indices $j\in \{1,\dots,p\}$, for which $B_j$ has parts that are traced by $g_1$.
\end{lem}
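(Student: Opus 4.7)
The plan is to reduce the claim directly to Lemma \ref{lem:Remes}(1) applied at level $m=1$ with road $R = R_N(a,b)$. First I would verify the two hypotheses of that lemma in the present setting. Since $a,b\in V_1$ are distinct, Lemma \ref{lem:Remmes1}(4) yields $|a-b|_\infty > (4r)r \geq (2r)r^{1}$. For the length bound on the road, observe that every vertex $x$ of $R$ lies in $V_N\subset K$ and $a\in K$, so, after the rescaling $\diam K = 1$, we have $|a-x|_\infty\leq 1 < 4 = (4/r)r^{1}$.

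The core of the argument is the observation that any branch $B_j$ with an edge traced by $g_1$ necessarily contains a vertex of $V_1\setminus R$. Let $e\subset B_j$ be such a traced edge; since $g_1$ tours only edges of $T_{N,1}$, we have $e\in T_{N,1}$. Write $z_j$ for the unique vertex of $B_j$ lying on $R$. Because $R\subset T_{N,1}$, the intersection $B_j\cap T_{N,1}$ is a nontrivial subtree of $B_j$ containing both $z_j$ and (the endpoints of) $e$. Choose a leaf $v$ of this subtree different from $z_j$. I claim that $v$ is already a leaf of $T_{N,1}$: any edge of $T_{N,1}$ incident to $v$ has its other endpoint in $B_j$, because $v\in B_j\setminus\{z_j\}$ and $z_j$ is the only vertex of $B_j$ meeting $R$, so such an edge cannot cross the road. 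Consequently $v$ has valence $1$ in $T_{N,1}$, and minimality of $T_{N,1}$ as a subtree of $T_N$ containing $V_1$ forces $v\in V_1$. Since $v\in B_j\setminus\{z_j\}$, we have $v\in V_1\setminus R$, as desired.

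With these two points in hand the proof concludes at once: each index $j$ for which $B_j$ is partly traced by $g_1$ corresponds to a distinct branch of $T_N$ with respect to $R_N(a,b)$ containing a vertex in $V_1\setminus R$, and Lemma \ref{lem:Remes}(1) bounds the number of such branches by the constant $C$. The only delicate step I anticipate is the combinatorial assertion that a leaf of $B_j\cap T_{N,1}$ other than $z_j$ is automatically a leaf of $T_{N,1}$; this depends crucially on the structural description of $B_j$ as (the closure of) a connected component of $T_N\setminus R$, which forces edges incident to interior vertices of $B_j$ to remain inside $B_j$.
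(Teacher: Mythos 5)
Your proof is correct and follows the same route as the paper: both reduce the bound to Lemma \ref{lem:Remes}(1) by observing that a branch with parts traced by $g_1$ must contain a vertex of $V_1\setminus R$. The paper asserts this implication in a single sentence without justification, whereas you supply the missing combinatorial argument (a leaf of $B_j\cap T_{N,1}$ other than $z_j$ is a leaf of $T_{N,1}$, hence lies in $V_1$), which makes the step rigorous.
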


\begin{proof}
If $B_j$ is a branch as in the assumption of the lemma, then $B_j$ contains a point in $V_1$. However, by Lemma \ref{lem:Remes}(1), we know that no more than $C$ such branches exist.
\end{proof}

Writing $I_1=[x,y]$, since $|g_1(x)-g_1(y)|_{\infty}> (2r) r$ and $|g_1(x)-z|_{\infty}\leq (1/r)r$ for every vertex $z$ of $R_N(g_1(x),g_1(y))$ in $T_N$, we can invoke Lemma \ref{lem:Remes}(3). Thus, we can find a branch $B$ of $R_N(g_1(x),g_1(y))$ with respect to $T_N$ that contains all vertices of $V_N\cap K_w$ for some $w\in A^*(r^t)$ such that no part of it is traced by $g_1$. We define $f_1|I_1$ so that the following properties are satisfied.
\begin{enumerate}
\item The map $f_1|I_1$ is piecewise linear and traces all the edges of $B \cup g_1(I_1) \subset T_{N}$. (Necessarily, every edge of $B$ is traced exactly twice, once in each direction.)
\item We have $f_1|\partial I_1 = g_1|\partial I_1$.
\end{enumerate}

Suppose that we have defined $f_1$ on $I_1,\dots,I_i$. To define $f_1|I_{i+1}$, we first verify the following analogue of Lemma \ref{lem:branch11}.

\begin{lem}\label{lem:branch1i}
Write $I_{i+1} = [x,y]$, $a=g_1(x)$ and $b=g_1(y)$. Let $\{B_1,\dots,B_p\}$ be the branches of $T_N$ with respect to the road $R_N(a,b)$ that contain a set $K_w\cap V_N$ for some $w\in A^*(r^{t+1})$. There exist at most $2C+1$ indices $j\in \{1,\dots,p\}$ for which $B_j$ has been traced by $f_1|I_1\cup\cdots\cup I_{i}$.
\end{lem}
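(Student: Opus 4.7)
The plan is to follow the template of Lemma \ref{lem:branch11}. Write $B^{(1)},\ldots,B^{(i)}$ for the branches inserted in the earlier modifications---so $B^{(k)}$ denotes the branch $B$ chosen in the construction of $f_1|I_k$, with anchor $z_k\in R_N(g_1(x_k),g_1(y_k))$. Since the image of $f_1|I_1\cup\cdots\cup I_i$ is the image of $g_1|I_1\cup\cdots\cup I_i\subseteq T_{N,1}$ together with $\bigcup_{k=1}^{i}B^{(k)}$, I bound the number of traced branches coming from each piece.

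First I check that Lemma \ref{lem:Remes} applies at level $m=1$: since $a,b\in V_1$ are distinct, Lemma \ref{lem:Remmes1}(4) gives $|a-b|_\infty>(4r)r$, and $|a-x|_\infty\leq\diam K=1\leq(4/r)r$ for every $x\in R$. Exactly as in the proof of Lemma \ref{lem:branch11}, any $B_j\in\{B_1,\ldots,B_p\}$ meeting the image of $g_1|I_1\cup\cdots\cup I_i$ contains an edge of $T_{N,1}$ and hence, by minimality of the spanning tree $T_{N,1}$, contains a vertex of $V_1\setminus R$. Lemma \ref{lem:Remes}(1) bounds the number of such branches by $C$.

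Next, the key design feature is that each $B^{(k)}$ was chosen so that no part of it is traced by $g_1$, hence $B^{(k)}$ contains no edge of $T_{N,1}$. Because the intersection of two subtrees of $T_N$ is again a subtree, $B^{(k)}\cap T_{N,1}$ reduces to the single vertex $z_k$, and since $R_N(a,b)\subset T_{N,1}$ we get $B^{(k)}\cap R_N(a,b)\subseteq\{z_k\}$. If $z_k\notin R$, then $B^{(k)}$ is disjoint from $R_N(a,b)$ and lies in a single branch $B^*$ with respect to $R_N(a,b)$; the unique $T_{N,1}$-path from $z_k$ to $R_N(a,b)$ also lies in $B^*$ by connectedness, so $B^*$ contains an edge of $T_{N,1}$, hence a vertex of $V_1\setminus R$, and so $B^*$ is already among the $C$ branches counted above. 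If $z_k\in R$, then $B^{(k)}$ is attached to the road at $z_k$ through a non-road edge and is contained in a unique \emph{new} branch $B^*_k\in\{B_1,\ldots,B_p\}$.

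The main obstacle is the bound of $C+1$ on the number of distinct new branches $B^*_k$, which would then yield the total $C+(C+1)=2C+1$. I expect this from a packing argument in the spirit of the proof of Lemma \ref{lem:Remes}(1): each new $B^*_k$ contains $V_N\cap K_{w_k}$ for the $w_k\in A^*(r^t)$ produced at the $k$th step, and the moreover clause of Lemma \ref{lem:Remes}(3) ensures that $B^*_k$ in fact contains a whole cylinder at level $t+1$ lying outside $R$. The cylinders associated to distinct $B^*_k$'s are pairwise essentially disjoint and sit in a bounded neighborhood of $a$, so the Ahlfors regularity \eqref{e:sreg} bounds their count by a constant depending only on $L_1,s,\tau,C_1,C_2$; after absorbing the discrepancy into the choice of $C$, this constant can be taken to be $C+1$.
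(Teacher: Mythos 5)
Your decomposition of the traced branches into those meeting the $g_1$-image and those meeting the inserted branches $B^{(k)}$, and the further split of the latter by whether the anchor $z_k$ lies on $R_N(a,b)$, is a reasonable alternative to the paper's organization, and your handling of the $z_k\notin R$ case (fold it into the first count because the connector path from $z_k$ to the road lives in $T_{N,1}$) is sound. The problem is the final step, which you acknowledge you do not have: the claim that at most $C+1$ distinct branches of $R_N(a,b)$ arise from inserted branches with $z_k\in R$. This is not filled by a packing argument ``in the spirit of Lemma~\ref{lem:Remes}(1),'' and the remark that any discrepancy can be ``absorbed into the choice of $C$'' is not legitimate: $C$ is a fixed constant produced by Lemma~\ref{lem:Remes}(1), namely a bound on the number of branches of $T_N$ with respect to $R_N(a,b)$ that contain a vertex of $V_1\setminus R$. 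The new branches $B^*_k$ you are trying to count need not contain any vertex of $V_1\setminus R$, so Lemma~\ref{lem:Remes}(1) does not apply to them, and a fresh packing count would return some other constant unrelated to $C$. If you replaced $C+1$ by that other constant $C''$, you would also have to re-prove the existence of an untraced branch, i.e.\ that Lemma~\ref{lem:Remes}(3) supplies at least $C+C''+1$ branches, which is not what is stated.

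The paper closes this gap with a different and more combinatorial count. Rather than enumerating the new branches by packing, it enumerates the offending intervals $I_{i_1}$ ($i_1\le i$) on which $g_1$ travels backwards along $R_N(a,b)$ and observes that the location of the right endpoint $g_1(y_{i_1})$ has only two possibilities: it lies in a branch of $T_{N,1}$ with respect to $R_N(a,b)$ (and Lemma~\ref{lem:Remes}(1) bounds those branches by $C$), or else $f_1|I_{i_1}$ passes through $a$, and since $f_1$ is an essentially 2-to-1 tour this can happen for at most one $i_1\le i$. This yields the $C+1$ directly and exploits the 2-to-1 structure of the tour rather than Ahlfors regularity. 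To repair your proof you would need to replace the heuristic packing step with this tour-based argument (or an equivalent one), and in particular you would need to justify why at most one inserted branch with $z_k\in R$ can escape the bound coming from the branches of $T_{N,1}$.
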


\begin{proof}
There are two cases in which a branch $B_j$ has been traced by $f_1|I_1\cup\cdots\cup I_i$.
The first case occurs when part of $B_j$ is already traced by $g_1$ (and hence by $f_1|I_1\cup\cdots\cup I_i$). As in Lemma \ref{lem:branch11}, at most $C$ such branches $B_j$ exist.
The second case occurs when we are traveling on the road $R_N(a,b)$ backwards. More specifically, the second case occurs when there exists $i_1\in \{1,\dots,i\}$ such that there is a part of $g_1(I_1)$ lying on $R_N(a,b)$ and part of $B_j$ is being traced by $f_1|I_{i_1}$. In this situation, there are two possible subcases:
\begin{enumerate}
\item the right endpoint of $I_{i_1}$ is mapped by $g_{1}$ into one of the branches of $T_{N,1}$ with respect to $R_N(a,b)$ and by Lemma \ref{lem:Remes}(1) at most $C$ such branches exist; and,
\item $f_{1}|I_{i_1}$ contains $a$ and since $f_1$ is essentially 2-1, at most one such interval exists.
\end{enumerate}
In total, there exist at most $2C+1$ indices $j\in \{1,\dots,p\}$ for which $B_j$ has been traced by $f_1|I_1\cup\cdots\cup I_i$.
\end{proof}

For $I_{i+1}$, we now work exactly as with $I_1$, but we choose a branch $B_j$ that has no edge being traced by $f_1|I_1\cup\cdots\cup I_i$. We can do so because by Lemma \ref{lem:Remes}(3), there exist at least $2C+2$ branches of $T_N$ with respect to the road $R_N(a,b)$ that contain a set $K_w\cap V_N$ for some $w\in A^*(r^{t+1})$. Modifying $g_1$ on each $I_i$ completes the definition of $f_1$.

Properties (P1), (P2), (P3) follow by design of $f_1$ and $\mathscr{E}_1$. For property (P4), given $I = [a,b]\in\mathscr{E}_1$ we have that $f_1(a), f_1(b) \in V_1$ and by Lemma \ref{lem:Remmes1}(4), $|f_1(a)-f_1(b)|_{\infty} \geq (4r)r$. On the other hand, since $\diam{K}=1$ and $f_1([0,1])\subset K$, we trivially have $|f_1(x)-f_1(y)| \leq (4/r)r$ which settles (P4). Property (P5) is vacuous in the initial step (as $f_2$ has not yet been defined). Finally, property (P$6'$) holds, because when $1\leq N-t$, we used Remes' branching lemma to ensure that each $I\in \mathscr{E}_1$ there exists $w\in A^*(r^{t+1})$ such that $f_1$ traces all vertices of $V_N\cap K_w$.

\subsubsection{Inductive step}\label{sec:Remesinduction} Suppose that for some $1\leq m\leq N-1$ we have defined $f_m$ and $\mathscr{E}_m$ so that properties (P1)--(P5) and (P$6'$) hold.

We start by defining an auxiliary map $g_{m+1}$ that visits the image of $f_m$ and $T_{N,m+1}$. In particular, define $g_{m+1}: [0,1] \to T_N$ and an auxiliary collection of intervals $\mathscr{B}_{m+1}$ of nondegenerate closed intervals in $[0,1]$ so that the following properties hold.
\begin{enumerate}
\item The intervals in $\mathscr{B}_{m+1}$ have mutually disjoint interiors and collectively $\bigcup \mathscr{B}_{m+1} = [0,1]$. Moreover, for any $I \in \mathscr{B}_{m+1}$ there exists unique $J\in\mathscr{E}_m$ such that $J\subseteq I$.
\item The map $g_{m+1}$ is a 2-to-1 piecewise linear tour of edges of $T_N$ in $f_m([0,1]) \cup T_{N,m+1}$. For any $I\in\mathscr{B}_{m+1}$, $g_{m+1}|I$ maps $I$ linearly onto an edge of $T_N$ in $f_m([0,1])\cup T_{N,m+1}$.
\item For each $I \in \mathscr{E}_m$, we have $g_{m+1}|\partial I=f_{m}|\partial I$ and $f_{m}(I) \subset g_{m+1}(I)$.
\end{enumerate}
Note that if $T_{N,m+1} \subset f_m([0,1])$ we can choose $g_{m+1} = f_m$.

To define $\mathscr{E}_{m+1}$, we will first identify the endpoints of its intervals. Towards this goal, let $W_{m+1}$ denote the set of endpoints of the intervals in $\mathscr{B}_{m+1}$ and let $P_{m}$ denote the set of endpoints of the intervals in $\mathscr{E}_m$. By definition of $\mathscr{B}_{m+1}$, we have $P_m \subset W_{m+1}$.

\begin{lem}\label{rem:Ppoints}
There exists a maximal set $P_{m+1}$ contained in $W_{m+1}$ with $P_{m+1}\supset P_m$ such that for any consecutive points $x,y \in P_{m+1}$,
\begin{enumerate}
\item $|g_{m+1}(x)-g_{m+1}(y)|_{\infty} \geq (2r)r^{m+1}$, and
\item if $z \in [x,y]$, then $|g_{m+1}(x)-g_{m+1}(z)|_{\infty} \leq (4/r)r^{m+1}$.
\end{enumerate}
\end{lem}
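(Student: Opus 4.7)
The goal is to exhibit at least one set $S\subseteq W_{m+1}$ with $P_m\subseteq S$ satisfying conditions (1) and (2); a maximal $P_{m+1}\supseteq S$ then exists automatically, since the subset lattice of the finite set $W_{m+1}$ is finite.

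I would first record that $P_m$ itself satisfies condition (1) at scale $r^{m+1}$. If $x,y$ are consecutive in $P_m$, then $[x,y]\in\mathscr{E}_m$, so by property (3) of $g_{m+1}$ we have $g_{m+1}(x)=f_m(x)$ and $g_{m+1}(y)=f_m(y)$; (P4) then yields $|g_{m+1}(x)-g_{m+1}(y)|_\infty\geq (2r)r^m\geq (2r)r^{m+1}$. Condition (2) does \emph{not} follow from (P4) alone, because (P4) gives only the weaker bound $(5/r)r^m$, which exceeds the required $(4/r)r^{m+1}$; so a genuine refinement by points of $W_{m+1}$ is needed inside each $I\in\mathscr{E}_m$.

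Fix $I=[a,b]\in\mathscr{E}_m$ and enumerate $W_{m+1}\cap[a,b]$ as $a=u_0<u_1<\cdots<u_L=b$. The key structural input is that, by the definition of $g_{m+1}$ and by Lemma~\ref{l:short} applied at level $N$, each restriction $g_{m+1}|[u_{j-1},u_j]$ is linear onto an edge of $T_N$ of length at most $2r^N\leq 2r^{m+1}$. I would then perform the greedy selection $v_0:=a$ and, inductively, let $v_{i+1}$ be the smallest $u_j\in(v_i,b]$ with $|g_{m+1}(v_i)-g_{m+1}(u_j)|_\infty\geq (2r)r^{m+1}$. Condition (1) holds for each new pair by construction, and condition (2) follows from a triangle-inequality estimate: for $z\in[v_i,v_{i+1}]$, letting $u_j$ be the last $W_{m+1}$-vertex with $u_j\leq z$, the minimality of $v_{i+1}$ in the greedy rule forces $|g_{m+1}(v_i)-g_{m+1}(u_j)|_\infty<(2r)r^{m+1}$, and adding the $2r^{m+1}$ per-edge bound gives $|g_{m+1}(v_i)-g_{m+1}(z)|_\infty<(2r+2)r^{m+1}\leq (4/r)r^{m+1}$ since $r<1$.

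The main obstacle is the endpoint $b$: since $b\in P_m\subseteq P_{m+1}$ it is forced into the selection, yet the last greedy vertex $v_{k-1}$ may satisfy $|g_{m+1}(v_{k-1})-g_{m+1}(b)|_\infty<(2r)r^{m+1}$, in which case (1) fails on the final pair $(v_{k-1},b)$. I would resolve this by a cleanup step: iteratively remove trailing $v_i$'s (each legal because $v_i\notin P_m$) until (1) is restored on the new final pair; in the worst case the process terminates at $(a,b)$, which satisfies (1) by (P4). Throughout the cleanup, (2) is preserved by a careful bookkeeping of triangle-inequality bounds on the retraced tail, using the slack built into the factor $1/r$ on the right-hand side of (2) and the fact that $r\lesssim L_1$ keeps the number of cleaned-up greedy vertices $O(1/r)$. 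Performing this procedure inside each $I\in\mathscr{E}_m$ and unioning yields the desired $S\supseteq P_m$, completing the proof modulo the (finite) existence of the maximal extension.
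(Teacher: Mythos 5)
Your approach is genuinely different from the paper's. The paper starts from $P_m$ and \emph{grows} the set: whenever condition (2) fails for a consecutive pair $(x,y)$, it inserts a single new point $z\in W_{m+1}\cap[x,y]$ chosen so that both $|g_{m+1}(x)-g_{m+1}(z)|_\infty$ and $|g_{m+1}(z)-g_{m+1}(y)|_\infty$ exceed $(2r)r^{m+1}$ --- this is where the case split $|g_{m+1}(x)-g_{m+1}(y)|_\infty\geq 4r^{m+1}$ versus $<4r^{m+1}$ enters, and the small-edge bound $2r^N$ guarantees such a $z$ exists via connectedness. Each insertion preserves (1) by construction, and termination by finiteness of $W_{m+1}$ forces (2). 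Your greedy-then-cleanup scheme is a legitimate alternative: the forward greedy pass guarantees (1) and (2) on internal pairs, and what remains is the last pair $(v_j,b)$ after pruning.

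However, the cleanup step as you justify it has a real gap. The claim that only $O(1/r)$ trailing greedy vertices are removed is false: the removed $v_i$'s are points of $V_N$ lying in $B(g_{m+1}(b),(2r)r^{m+1})$, and by Lemma~\ref{lem:Remmes1}(4) they are merely $(4r)r^N$-separated, so a priori there could be as many as $\sim r^{(m+1-N)s}$ of them, an amount that blows up as $N\to\infty$. Counting removed vertices is not the right mechanism. What actually saves the argument is that the diameter bound does not depend on the count: every removed $v_i$ satisfies $|g_{m+1}(v_i)-g_{m+1}(b)|_\infty<(2r)r^{m+1}$, and $|g_{m+1}(v_j)-g_{m+1}(b)|_\infty<(4r+2)r^{m+1}$ (by stepping through $v_{j+1}$), so for $z$ anywhere in $[v_j,b]$ a three-term triangle inequality gives $|g_{m+1}(v_j)-g_{m+1}(z)|_\infty<(8r+4)r^{m+1}\leq(4/r)r^{m+1}$ whenever $r\leq 1/2$, which holds here since $r=\tfrac14 L_1\tau$. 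You should replace the ``$O(1/r)$ vertices'' assertion with this explicit diameter estimate; as written, the rationale you give for why (2) survives the cleanup does not hold up, even though the conclusion does.
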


\begin{proof}
We start by making a simple remark. By design of $\mathscr{B}_{m+1}$, for any two consecutive points $x,y \in W_{m+1}$, there exists $w,u\in A^*(r^{N})$ such that $g_{m+1}(x)\in K_w$, $g_{m+1}(y)\in K_u$ and $K_{w}\cap K_u \neq \emptyset$. Hence \begin{equation}\label{eq:Wpoints}
|g_{m+1}(x)-g_{m+1}(y)|_{\infty} \leq 2r^{N}.
\end{equation}
To prove the lemma, it suffices (as $W_{m+1}$ is finite) to construct a set $P_{m+1}'$ such that $P_m \subset P_{m+1}' \subset W_{m+1}$ and $P_{m+1}'$ satisfies the conclusions of the lemma. The definition of $P_{m+1}'$ will be in an inductive manner.
Set $P_{m+1}^{(1)}  = P_m$. By the inductive hypothesis (P4), we have that $|g_{m+1}(x)-g_{m+1}(y)|_{\infty} \geq (2r) r^{m+1}$ for any two consecutive points $x,y \in P_{m+1}^{(1)}$. Assume now that for some $i\in\N$ we have defined $P_{m+1}^{(i)}$ so that $|g_{m+1}(x)-g_{m+1}(y)|_{\infty} \geq (2r) r^{m+1}$ for any two consecutive points $x,y \in P_{m+1}^{(i)}$. To define the next set $P_{m+1}^{(i)}$, we consider two alternatives.

Suppose first that for any two consecutive points $x,y\in P_{m+1}^{(i)}$ with $x<y$ and for any $z\in W_{m+1}\cap [x,y]$, we have $|g_{m+1}(x)-g_{m+1}(z)|_{\infty} \leq (4/r)r^{m+1}$. In this case, we set $P_{m+1}^{(i+1)} := P_{m+1}^{(i)}$.

Suppose now that there exist consecutive $x,y \in P_{m+1}^{(i)}$ with $x<y$ for which the previous situation fails. We claim that there exists $z \in W_{m+1}\cap [x,y]$ such that
\begin{equation}\label{eq:Ppoints}
\max\{ |g_{m+1}(x)-g_{m+1}(z)|_{\infty} , |g_{m+1}(y)-g_{m+1}(z)|_{\infty}\} \geq r^{m+1}.
\end{equation}
To prove (\ref{eq:Ppoints}), assume first that $|g_{m+1}(x)-g_{m+1}(y)|_{\infty}\} \geq 4 r^{m+1}$. Since $g_m([x,y])$ is connected, there exists $x\in W_{m+1}\cap [x,y]$ such that $g_{m+1}(z)$ is not contained in $B(x,r^{m+1})\cup B(y,r^{m+1})$ and (\ref{eq:Ppoints}) holds. Assume now that $|g_{m+1}(x)-g_{m+1}(y)|_{\infty} < 4 r^{m+1}$ and let $z\in W_{m+1}\cap [x,y]$ be such that $|g_{m+1}(x)-g_{m+1}(z)|_{\infty} > (4/r)r^{m+1}$. Since $r< 1/4$,
\begin{align*}
|g_{m+1}(y)-g_{m+1}(z)|_{\infty} &\geq |g_{m+1}(x)-g_{m+1}(z)|_{\infty} - |g_{m+1}(x)-g_{m+1}(y)|_{\infty} \\
&> (4/r)r^{m+1} - 4 r^{m+1} > 4r^{m+1}.
\end{align*}
Having proved (\ref{eq:Ppoints}), we set $P_{m+1}^{(i+1)} := P_{m+1}^{(i)}\cup\{z\}$.

In view of \eqref{eq:Wpoints} and finiteness of the set $W_{m+1}$, there exists a minimal $n\in\N$ with $P_{m+1}^{(n+1)} = P_{m+1}^{(n)}$. Set $P_{m+1}' := P_{m+1}^{(n)}$. It is straight forward to see using induction that the set $P_{m+1}'$ satisfies the conclusions of the lemma.
\end{proof}

Define $\mathscr{E}_{m+1}$ to be the maximal collection of nondegenerate closed intervals in $[0,1]$ whose endpoints are consecutive points in the set $P_{m+1}$. If $m+1>N-t$, set $f_{m+1}:=g_{m+1}$. Otherwise, $m+1\leq N-t$ and to define $f_{m+1}$, we modify $g_{m+1}$ on each $I\in\mathscr{E}_{m+1}$ like we did in the initial step.

Assume $m+1\leq N-t$ and let $\{I_1,\dots,I_{q}\}$ be an enumeration of $\mathscr{E}_{m+1}$. We start with $I_1$. If $g_{m+1}(I_{1})$ traces a branch of $T_N$ with respect to $R_N(a,b)$ that contains all vertices of $V_N\cap K_w$ for some $w\in A^*(r^{m+t+1})$, then we set $f_{m+1}|I_{1} = g_{m+1}|I_{1}$. Suppose now that $g_{m+1}(I_{1})$ does not trace such a branch.

\begin{lem}[cf.~Lemma {\ref{lem:branch11}}] \label{lem:branchm1}
Let $I_1 = [x,y]$, $a=g_{m+1}(x)$ and $b=g_{m+1}(y)$. Let $\{B_1,\dots,B_p\}$ denote the branches of $T_N$ with respect to the road $R_N(a,b)$ that contain a set $K_w\cap V_N$ for some $w\in A^*(r^{m+t+1})$. Then there exist at most $C$ indices $j\in \{1,\dots,p\}$ for which $B_j$ has parts that are traced by $g_{m+1}$.
\end{lem}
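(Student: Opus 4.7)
The plan is to argue exactly as in Lemma \ref{lem:branch11}, but at the finer scale $r^{m+1}$ in place of $r$. The target bound of $C$ will be furnished by invoking Lemma \ref{lem:Remes}(1) with $m$ replaced by $m+1$.

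First, I would verify the hypotheses of Lemma \ref{lem:Remes}(1). Since the endpoints $x,y$ of $I_1$ are consecutive elements of $P_{m+1}$, Lemma \ref{rem:Ppoints} supplies $|a-b|_{\infty}\geq (2r)r^{m+1}$ together with $|a-g_{m+1}(\zeta)|_{\infty}\leq (4/r)r^{m+1}$ for every $\zeta\in[x,y]$. Because $g_{m+1}([x,y])$ is connected in the tree $T_N$ and contains both $a$ and $b$, it automatically contains the unique road $R_N(a,b)$ between them, so every vertex $z$ of $R_N(a,b)$ arises as $g_{m+1}(\zeta)$ for some $\zeta\in[x,y]$ and the second estimate applies to $z$. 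Lemma \ref{lem:Remes}(1) at level $m+1$ then bounds by $C$ the number of branches of $T_N$ with respect to $R_N(a,b)$ that contain a point of $V_{m+1}\setminus R$.

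Second, I would verify that every branch $B_j$ having an edge $e$ traced by $g_{m+1}$ must contain such a point of $V_{m+1}\setminus R$. Since $g_{m+1}$ tours only edges lying in $f_m([0,1])\cup T_{N,m+1}=\tilde T_{N,m}\cup T_{N,m+1}$, the edge $e$ lies in this union. If $e\in T_{N,m+1}$, then by definition of the minimal subtree containing $V_{m+1}$, $e$ lies on the unique $T_N$-path between some $v_1,v_2\in V_{m+1}$, and the standard tree argument---$B_j$ meets $R_N(a,b)$ only at the single vertex $z_{B_j}$ and $T_N$ is acyclic---places at least one of $v_1,v_2$ in $B_j\setminus\{z_{B_j}\}\subset V_{m+1}\setminus R$. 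If instead $e\in\tilde T_{N,m}\setminus T_{N,m+1}$, then $e$ lies in an extra subtree introduced during the recursive construction of some $f_j$ with $j\leq m$, and the same tree argument applies once one checks---by unwinding the inductive construction and tracking attachment points via property (P$6'$)---that every edge of $\tilde T_{N,j}$ lies on a $V_{j+1}$-to-$V_{j+1}$ path in $T_N$. Using $V_{j+1}\subset V_{m+1}$ for $j\leq m$, from Lemma \ref{lem:Remmes1}(2), this produces the required $V_{m+1}$-to-$V_{m+1}$ path through $e$, and the tree argument concludes.

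The main obstacle I expect is the second case: verifying that edges in the extras of $\tilde T_{N,m}$ lie on $V_{m+1}$-to-$V_{m+1}$ paths in $T_N$. The cleanest route is to carry, as an additional invariant in the inductive construction of the $f_j$'s, the statement that every edge of $\tilde T_{N,j}$ lies on a $V_{j+1}$-to-$V_{j+1}$ path in $T_N$, which holds at the base step (where $\tilde T_{N,1}\supset T_{N,1}$ already has this property) and is preserved by the inductive step since each newly introduced extra is a branch of $T_N$ attached at a vertex of the ambient backbone $\tilde T_{N,j-1}$ and contains $V_N\cap K_{w_j}$ for some $w_j\in A^*(r^{j+t})$. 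Once this invariant is in place, the proof of Lemma \ref{lem:branchm1} closes mechanically, paralleling the one-line proof of Lemma \ref{lem:branch11}.
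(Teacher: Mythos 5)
Your proof has the right high-level structure: reduce to Lemma~\ref{lem:Remes}(1) at scale $r^{m+1}$ after verifying its hypotheses via Lemma~\ref{rem:Ppoints}, and then argue that each traced branch contains a vertex of $V_{m+1}\setminus R$. The hypothesis verification is correct, including the useful observation that $R_N(a,b)\subset g_{m+1}(I_1)$ because connected subgraphs of trees are convex, and your treatment of the case where the traced edge lies in $T_{N,m+1}$ is sound.

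The gap is exactly where you flag it: the case of traced edges lying in the ``extras'' $\widetilde T_{N,m}\setminus T_{N,m+1}$. The inductive invariant you propose to close it --- that every edge of $\widetilde T_{N,j}$ lies on a $V_{j+1}$-to-$V_{j+1}$ path in $T_N$ --- is not correct. An extra branch introduced in the construction of $f_j$ is guaranteed to contain $V_N\cap K_w$ only for some $w\in A^*(r^{j+t})$; since $t\geq 2$, such a $K_w$ will generically contain no point of $V_{j+1}$ at all, and the terminal edges of that extra abut leaves of $T_N$ which lie in $V_N$ but need not lie in $V_{j+1}$. Your claimed base case also does not hold as stated: whenever $1\leq N-t$, $\widetilde T_{N,1}$ strictly contains $T_{N,1}$, and the new material is precisely where the invariant must be checked, so ``$\widetilde T_{N,1}\supset T_{N,1}$ already has this property'' does not suffice. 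By contrast, the paper's one-sentence argument restricts attention to those branches of $g_{m+1}([0,1])$ with respect to $R_N(a,b)$ that are \emph{not} contained in $f_m([0,1])$ --- i.e., those picking up genuine new $T_{N,m+1}$-material --- and applies Lemma~\ref{lem:Remes}(1) only to those. Your proposal therefore attempts to prove a stronger claim (covering branches entirely inside $f_m([0,1])$ as well), and the additional strength is not obtained by the argument given.
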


\begin{proof}
The branches of $R_N(a,b)$ with respect to $g_{m+1}([0,1])$ that are not in $f_m([0,1])$ are branches that contain points in $V_{m+1}$. Therefore, by Lemma \ref{lem:Remes}(1), there are at most $C$ of them.
\end{proof}

Since $|a-b|_{\infty}> (2r) r^{m+1}$ and $|a-z|_{\infty}\leq (4/r)r^{m+1}$ for every vertex $z$ of $R_N(a,b)$ in $T_N$, we can invoke Lemma \ref{lem:Remes}(3). In particular, there exist at least $2C+2$ branches of $T_{N}$ with respect to the road $R_N(a,b)$ such that for every branch there exists $w\in A^*(r^{m+t+1})$ such that all vertices of $K_w$ are in that branch. Fix such a branch $B$ and define $f_{m+1}|I_1$ so that the following properties are satisfied.
\begin{enumerate}
\item The map $f_{m+1}|I_1$ is piecewise linear and traces all the edges of $B \cup g_{m+1}(I_1) \subset T_{N}$. In fact, every edge of $B$ is traced exactly twice.  Moreover, for any edge $e$ of $B \cup g_{m+1}(I_1)$ there exists $J \subset I_1$ such that $f_{m+1}|I_1$ maps $J$ linearly onto $e$.
\item We have $f_{m+1}|I_1(x) = g_{m+1}(x)$ and $f_{m+1}|I_1(y) = g_{m+1}(y)$.
\end{enumerate}

Suppose that we have defined $f_{m+1}$ on $I_1,\dots,I_i$. Write $I_{i+1}=[x,y]$, let $a=g_{m+1}(x)$ and let $b=g_{m+1}(y)$. If $g_{m+1}(I_{i+1})$ traces a branch of $T_N$ with respect to $R_N(a,b)$ that contains all vertices of $V_N\cap K_w$ for some $w\in A^*(r^{m+t+1})$, then we set $f_{m+1}|I_{i+1} = g_{m+1}|I_{i+1}$. Suppose now that $g_{m+1}(I_{i+1})$ does not trace such a branch.

\begin{lem}[cf.~Lemma \ref{lem:branch1i}] \label{lem:branch2} Let $\{B_1,\dots,B_p\}$ be the branches of $T_N$ with respect to the road $R_N(a,b)$ that contain a set $K_w\cap V_N$ for some $w\in A^*(r^{t+m+1})$. There exist at most $2C+C'+1$ indices $j\in \{1,\dots,p\}$ for which $B_j$ has been traced by $f_{m+1}|I_1\cup\cdots\cup I_i$.
\end{lem}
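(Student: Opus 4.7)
My plan is to mimic the case analysis used for Lemma \ref{lem:branch1i}, with one new sub-case at the inductive stage that produces the extra term $C'$ via Lemma \ref{lem:Remes}(2).

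Following Lemma \ref{lem:branch1i}, I will split the branches $B_j$ traced by $f_{m+1}|I_1\cup\dots\cup I_i$ into two cases. \emph{Case 1}: $B_j$ is already partially traced by $g_{m+1}$; Lemma \ref{lem:branchm1} bounds the number of such $B_j$ by $C$. \emph{Case 2}: $B_j$ is encountered only through the branch-extension at some earlier interval $I_{i_1}$ (with $i_1\le i$), so $g_{m+1}(I_{i_1})$ must overlap $R_N(a,b)$. Writing $I_{i_1}=[x_{i_1},y_{i_1}]$, $a_{i_1}=g_{m+1}(x_{i_1})$, and $b_{i_1}=g_{m+1}(y_{i_1})$, I will further split Case 2 into three sub-cases according to the location of $b_{i_1}$ relative to the vertex set $R$ of $R_N(a,b)$.

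\emph{Sub-case (2a):} $b_{i_1}\notin R$. Then $b_{i_1}$ lies in a branch of $T_N$ with respect to $R_N(a,b)$. Although $b_{i_1}\in V_N$ need not lie in $V_{m+1}$, the branch-preserving property recalled in the proof of Lemma \ref{lem:Remes} lets me locate a vertex of $V_{m+1}$ in the same branch (by passing to the ancestor of $b_{i_1}(N)$ in $A^*(r^{m+1})$ and using Lemma \ref{lem:Remmes1}(1) to find the unique vertex of $V_{m+1}$ in the corresponding cylinder). Lemma \ref{lem:Remes}(1) applied at level $m+1$ then bounds these branches by $C$. \emph{Sub-case (2b):} $b_{i_1}\in R\setminus\{a,b\}$. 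By Lemma \ref{rem:Ppoints}, consecutive points of $P_{m+1}$ satisfy $|g_{m+1}(p)-g_{m+1}(q)|_\infty\ge (2r)r^{m+1}$, so the collection of all such $b_{i_1}$ forms a subset of $R$ with pairwise separations at least $(2r)r^{m+1}$; Lemma \ref{lem:Remes}(2) applied at level $m+1$ caps this count at $C'$. \emph{Sub-case (2c):} $f_{m+1}(I_{i_1})$ contains the common endpoint $a$ of $I_{i+1}$. Since $f_{m+1}$ is essentially $2$-to-$1$ and $a$ is already visited once by $I_{i+1}$, at most one such $I_{i_1}$ with $i_1\le i$ can contribute in this way. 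Combining Case 1 and the three sub-cases of Case 2 gives $C+C+C'+1=2C+C'+1$, as required.

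The principal obstacle is Sub-case (2a): the endpoint $b_{i_1}$ lies in $V_N$, not in $V_{m+1}$, so Lemma \ref{lem:Remes}(1) is not immediately applicable. The branch-preserving property bridges this gap, but one must verify carefully that the ancestor word $w\in A^*(r^{m+1})$ of $b_{i_1}(N)$ produces a vertex of $V_{m+1}\cap K_w$ belonging to the same branch of $T_N$ with respect to $R_N(a,b)$. A secondary subtlety is that Case 1 must absorb all branches touched by $f_m([0,1])$ (not just by $T_{N,m+1}$), which is precisely what Lemma \ref{lem:branchm1} encodes. A symmetric analysis in terms of the left endpoint $a_{i_1}$ is unnecessary, because distinct sub-intervals $I_{i_1}\subset\mathscr{E}_{m+1}$ are accounted for once when their right endpoint crosses the relevant region, in parallel with the treatment of the base step.
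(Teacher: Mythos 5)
Your proposal matches the paper's proof in structure: the same split into a case bounded by $C$ via Lemma \ref{lem:branchm1}, followed by three sub-cases in the ``traveling backwards'' scenario bounded by $C$, $C'$, and $1$ through Lemma \ref{lem:Remes}(1), Lemma \ref{lem:Remes}(2), and the essentially-$2$-to-$1$ property of $f_{m+1}$, giving the total $2C+C'+1$. The additional care you give in sub-case (2a)---using the branch-preserving property together with Lemma \ref{lem:Remmes1}(1) to replace a $V_N$ vertex by a $V_{m+1}$ vertex in the same branch---fills in a detail the paper treats implicitly, but the decomposition and the key lemmas invoked are identical.
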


\begin{proof}
There are two cases in which a branch $B_j$ has been traced by $f_1|I_1\cup\cdots\cup I_i$.
The first case is when part of $B_j$ is already traced by by $g_{m+1}$ (and hence $f_{m+1}|I_1\cup\cdots I_i$). As in Lemma \ref{lem:branchm1}, at most $C$ such branches exist.

The second case is when we are traveling on the road $R_N(a,b)$ backwards. Specifically, this case occurs when there exists $i_1\in \{1,\dots,i\}$ such that there is a part of $g_{m+1}(I_1)$ lying on $R_N(a,b)$ and part of $B_j$ is being traced by $f_{m+1}|I_{i_1}$. There are three possible subcases:
\begin{enumerate}
\item the right endpoint of $I_{i_1}$ is mapped by $g_{m+1}$ into one of the branches of $T_{N,1}$ with respect to $R_N(a,b)$ and by Lemma \ref{lem:Remes}(1) at most $C$ such branches exist;
\item the right endpoint of $I_{i_1}$ is mapped onto the road $R_N(a,b)$ and by Lemma \ref{lem:Remes}(2) at most $C'$ such points exist; and,
\item $f_{m+1}|I_{i_1}$ contains $a$, and since $f_{m+1}$ is essentially 2-to-1, at most one such interval exists.
\end{enumerate}
In total, there exist at most $2C+C'+1$ indices $j\in \{1,\dots,p\}$, for which $B_j$ has been traced by $f_{m+1}|I_1\cup\cdots\cup I_i$.
\end{proof}

For $I_{i+1}$ we work exactly as with $I_1$, but we choose a branch $B$ that has not been traced by $f_{m+1}|I_1\cup\cdots\cup I_i$. We can do so because by Lemma \ref{lem:Remes}(3), there exist at least $2C+C'+2$ such branches. Modifying $g_{m+1}$ on each $I_i$ completes the definition of $f_{m+1}$.

\subsubsection{Properties (P1)--(P5) and (P$6'$) for the inductive step}

We complete the inductive step by proving properties (P1)--(P5) and (P$6'$). Properties (P1), (P2), (P3) and (P$6'$) follow immediately by design of $\mathscr{E}_{m+1}$ and $f_{m+1}$.

For (P4), fix $I = [a,b] \in\mathscr{E}_{m+1}$. The first claim of (P4) follows by Lemma \ref{rem:Ppoints} and the fact that $f_{m+1}|\partial I = g_{m+1}|\partial I$. For the second claim, let $x\in [a,b]$. If $f_{m+1}(x) \in g_{m+1}([a,b])$ (which e.g.~ always happens when $m+1>N-t$), then
\[ |f_{m+1}(x) - f_{m+1}(a)|_{\infty} \leq (4/r)r^{m+1}\]
by Lemma \ref{rem:Ppoints}. If $f_{m+1}(x) \not\in g_{m+1}([a,b])$ (which can only happen when $m+1\leq N-t$), then $f_{m+1}(x)$ is contained in a branch $B$ of $T_N$ with respect to $R_N(f_{m+1}(a),f_{m+1}(b))$. Thus, $\diam{B} \leq r^{m}$, and if $z\in [a,b]$ with $f_{m+1}(z) \in B\cap g_{m+1}([a,b])$, then
\begin{align*}
|f_{m+1}(x) - f_{m+1}(a)|_{\infty} &\leq |f_{m+1}(z) - f_{m+1}(a)|_{\infty} + |f_{m+1}(x) - f_{m+1}(z)|_{\infty}\\
&\leq (4/r)r^{m+1} + \diam{B}\\
&\leq (4/r)r^{m+1} + (r^{m+2} + \cdots + r^{N} )\leq (5/r)r^{m+1}.
\end{align*}

For (P5), fix $I \in \mathscr{E}_m$. By design of $f_{m+1}$ and $g_{m+1}$, we have $f_m(I) \subset g_{m+1}(I)$ and $g_{m+1}(I) \subset f_{m+1}(I)$. Thus, $f_m(I) \subset f_{m+1}(I)$. Let $x$ be an endpoint of $I$. On one hand, $g_{m+1}(x) = f_m(x)$. On the other hand, there exists $J \in \mathscr{E}_{m+1}$ with $x$ as its endpoint, and by construction, $f_{m+1}|\partial J = g_{m+1}|\partial J$. Therefore, $f_{m+1}|\partial I = f_m|\partial I$.

\subsubsection{Property (P7)} To prove (P7), suppose that $m+1 = N$. Since $m+1 = N > N-t$, the map $f_{m+1} = g_{m+1}$. By definition, $g_{m+1}([0,1])$ contains $T_{N,N} = T_N$, so $\widetilde{T}_{N,m+1} = \widetilde{T}_{N,N} = f_{m+1}([0,1]) = T_N$. Moreover, since $W_{m+1}$ satisfies both conclusions of Lemma \ref{rem:Ppoints}, $W_{m+1} = P_{m+1}$. Hence $\mathscr{E}_{m+1} = \mathscr{B}_{m+1}$. Thus, since every interval from $\mathscr{B}_{m+1}$ is mapped by $g_{m+1}$ linearly onto an edge of $T_N$, every interval from $\mathscr{E}_{m+1}$ is mapped by $f_{m+1}$ linearly onto an edge of $T_N$.

With persistence, we have completed the proof of Lemma \ref{prelim}.

\section{Bedford-McMullen carpets and self-affine sponges}\label{sec:carpets}
Self-affine carpets were introduced and studied independently by Bedford \cite{Bedford} and McMullen \cite{McMullen}. Fix integers $2 \leq n_1\leq n_2$. For each pair of indices $i\in\{1,\dots,n_1\}$ and $j\in\{1,\dots,n_2\}$, let $\phi_{i,j} : \R^2 \to \R^2$ be the affine contraction given by
\[ \phi_{i,j}(x,y) = (n_1^{-1}(i-1+x) , n_2^{-1}(j-1+y))\quad\text{with }\Lip\phi_{i,j}=n_1^{-1}.\]
For each nonempty set $A\subset\{1,\dots,n_1\}\times\{1,\dots,n_2\}$, we associate the iterated function system $\mathcal{F}_A=\{\phi_{i,j} : (i,j)\in A\}$ over $\R^2$ and let $\mathcal{S}_A$ denote the attractor of $\mathcal{F}_A$, called a \emph{Bedford-McMullen carpet}. In general, we have $\mathcal{S}_A \subset [0,1]^2$.

The following proposition serves as a brief overview of how the similarity dimension of $\mathcal{F}_A$ compares to the Hausdorff, Minkowski, and Assouad dimensions of the carpet $\mathcal{S}_A$; for definitions of these dimensions, we refer the reader to \cite{McMullen} and \cite{Mackay}.

\begin{prop}\label{prop:BMdimensions}
Let $2 \leq n_1\leq n_2$ and $A$ be as above. For all $i\in\{1,\dots,n_1\}$, define
\[ t_i := \card\{j : (i,j)\in A\}.\] Also define $t:=\max_i t_i$ and $r:=\card\{i : t_i \neq 0\}$.
\begin{enumerate}
\item The similarity dimension is
\[ \sdim(\mathcal{F}_A) = \log_{n_1}\left(\sum_{i=1}^{n_1}t_i\right) = \log_{n_1}(\card{A}).\]
\item \cite{McMullen} The Hausdorff dimension is
\[ \dim_H(\mathcal{S}_A) =  \log_{n_1}\left(\sum_{i=1}^{n_1}t_i^{\log_{n_2}{n_1}}\right).\]
\item \cite{McMullen} The Minkowski dimension is
\[ \dim_M(\mathcal{S}_A) = \log_{n_1}{r} + \log_{n_2}\left(r^{-1}\sum_{i=1}^{n_1}t_i\right) = \log_{n_1}{r}+ \log_{n_2}(r^{-1}\card{A}).\]
\item \cite{Mackay} If $n_1<n_2$, then the Assouad dimension is
\[ \dim_A(\mathcal{S}_A) = \log_{n_1}{r} + \log_{n_2}{t} .\]
\end{enumerate}
\end{prop}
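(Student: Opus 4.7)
The plan is to treat the four parts separately, noting that parts (2), (3), and (4) are classical results that we quote directly from the attributed references; the only item requiring verification here is (1).

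For part (1), I would apply the definition of similarity dimension from equation \eqref{eq:sim} directly to the generators $\phi_{i,j}$. The derivative of $\phi_{i,j}$ is the diagonal matrix with entries $n_1^{-1}$ and $n_2^{-1}$, so its Euclidean Lipschitz constant is $\max(n_1^{-1},n_2^{-1})=n_1^{-1}$ (using the hypothesis $n_1\leq n_2$), as already recorded in the definition of $\phi_{i,j}$. Because every contraction in $\mathcal{F}_A$ has the same Lipschitz norm, the defining equation $\sum_{(i,j)\in A}(\Lip\phi_{i,j})^{s}=1$ reduces to $\card A\cdot n_1^{-s}=1$, which gives $s=\log_{n_1}(\card A)$. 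The equivalent form $s=\log_{n_1}(\sum_{i=1}^{n_1}t_i)$ follows immediately from the identity $\card A=\sum_{i=1}^{n_1}t_i$, a consequence of the fact that the slices $\{j:(i,j)\in A\}$ partition $A$ as $i$ ranges over $\{1,\dots,n_1\}$.

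For parts (2)--(4), I would invoke the original works. McMullen's theorem in \cite{McMullen} (with antecedents in Bedford \cite{Bedford}) supplies the Hausdorff and box-counting formulas in (2) and (3) exactly in the form stated, once one checks that our conventions for $(n_1,n_2)$ and for indexing the digits $A\subset\{1,\dots,n_1\}\times\{1,\dots,n_2\}$ match those of \cite{McMullen}. Similarly, (4) is Mackay's theorem \cite{Mackay}, valid under the strict inequality $n_1<n_2$ (otherwise $n_1=n_2$ produces a self-similar carpet for which $\dim_A=\dim_H$ by Ahlfors regularity).

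The main obstacle is essentially bookkeeping: verifying that the role of ``rows versus columns'' and the chosen orientation $n_1\leq n_2$ agree with the statements in \cite{Bedford}, \cite{McMullen}, and \cite{Mackay}, so that the formulas can be transcribed without a swap of parameters. Beyond this translation, there is nothing to prove that is not already contained in the cited references.
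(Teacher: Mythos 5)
Your proposal matches the paper's approach exactly: the paper itself offers no written proof of this proposition, presenting parts (2)--(4) as direct citations to \cite{McMullen} and \cite{Mackay}, and treating part (1) as an immediate consequence of the definition of similarity dimension since all the generators $\phi_{i,j}$ share the Lipschitz constant $n_1^{-1}$, so that $\sum_{(i,j)\in A}(\Lip\phi_{i,j})^s = \card A\cdot n_1^{-s}=1$ gives $s=\log_{n_1}(\card A)=\log_{n_1}(\sum_i t_i)$. Your additional remark about matching the row/column conventions with the references is sensible bookkeeping but does not change the substance.
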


It is easy to see that for every Bedford-McMullen carpet, \begin{equation}\dim_H(\mathcal{S}_A) \leq \dim_M(\mathcal{S}_A) \leq \min\{ \dim_A(\mathcal{S}_A), \sdim(\mathcal{F}_A)\}.\end{equation} However, there is no universal comparison between the Assouad and similarity dimensions. In fact, there are examples of self-affine carpets showing that $\dim_{A}(\mathcal{S}_A) < \sdim(\mathcal{F}_A)$, $\dim_{A}(\mathcal{S}_A)=\sdim(\mathcal{F}_A)$, and $\dim_{A}(\mathcal{S}_A) > \sdim(\mathcal{F}_A)$ are each possible. We emphasize that the similarity dimension of a self-affine carpet can exceed 2 (see Figure \ref{fig:mc23}).

\subsection{H\"older parameterization of connected Bedford-McMullen carpets with sharp exponents}
For each index $i\in\{1,\dots,n_1\}$, define $A_i := \{i\}\times \{1,\dots,n_2\}$ and $A_0 :=  \bigcup_{i=1}^{n_1}A_i$. Note that the carpet $\mathcal{S}_{A_0}=[0,1]^2$, and for each $i\in\{1,\dots,n_1\}$, the carpet $\mathcal{S}_{A_i}$ is the vertical line segment $\{(i-1)/(n_1-1)\}\times[0,1]$ (see Figure \ref{fig:carpets}).

\begin{figure}[th]\begin{center}
\includegraphics[width=.8\textwidth]{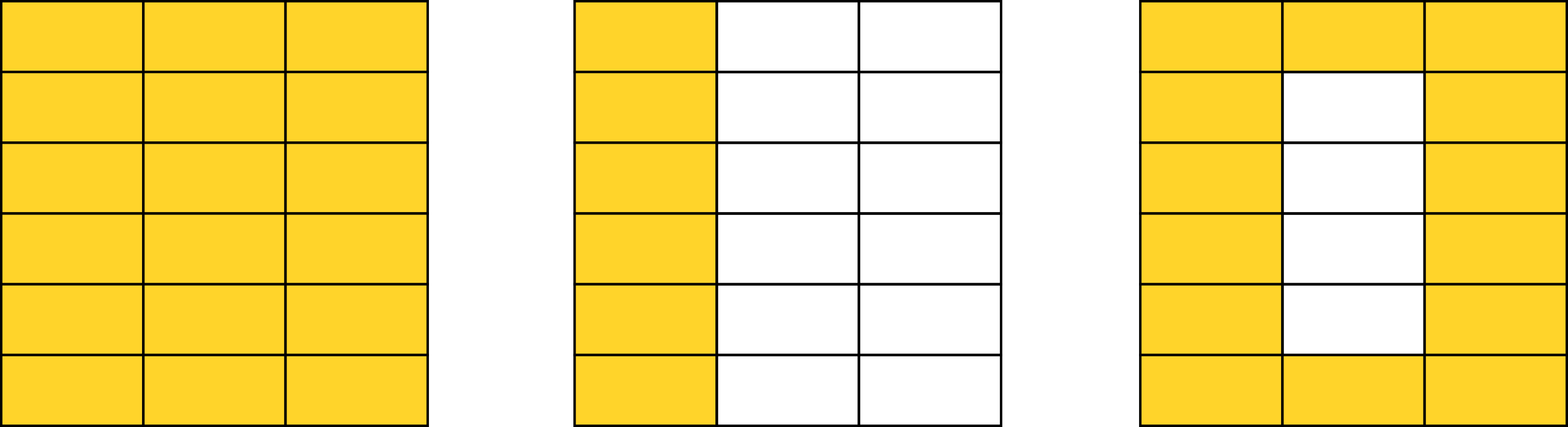}\end{center}
\caption{First iteration of Bedford-McMullen carpets with generators $A$. On the left, $A=A_0$ (the square). In the middle, $A=A_1$ (a vertical line). On the right, $A= \{(1,1),\dots,(1,6),(2,1),(2,6),(3,1),\dots,(3,6)\}.$}\label{fig:carpets}
\end{figure}

Our goal in this section is to establish the following statement, which encapsulates Theorem \ref{thm:carpets2} from the introduction.

\begin{thm}[H\"older parameterization] \label{thm:carpets}
Let $2 \leq n_1\leq n_2$ be integers and let $A$ be as above. If $\mathcal{S}_A$ is connected, then there exists a surjective $(1/\a)$-H\"older map $F:[0,1]\to \mathcal{S}_{A}$ with
\[
\a=
\begin{cases}
\text{arbitrary}, &\text{ if $\card(A)=1$};\\
1, &\text{ if $A=A_i$ for some $i\in\{1,\dots,n_1\}$};\\
2, &\text{ if $A=A_0$};\\
\sdim(\mathcal{F}_A), &\text{ otherwise}.
\end{cases}
\]
Furthermore, the exponent $1/\a$ is sharp.
\end{thm}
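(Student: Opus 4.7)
My plan is to handle the four cases of the theorem separately. When $\card(A)=1$, the attractor is the unique fixed point of the sole contraction; any constant map yields a parameterization, and sharpness is vacuous. When $A=A_i$, direct calculation gives $\mathcal{S}_A=\{(i-1)/(n_1-1)\}\times [0,1]$, a vertical segment, parameterized Lipschitz by the second coordinate; sharpness $\alpha\geq 1$ is classical. When $A=A_0$, the attractor is the unit square, parameterized by a classical $(1/2)$-H\"older Peano or Hilbert space-filling curve; sharpness $\alpha\geq 2$ follows since a $(1/\alpha)$-H\"older image of $[0,1]$ has Hausdorff dimension at most $\alpha$.

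The ``otherwise'' case is the main content. For existence of a $(1/\sdim(\mathcal{F}_A))$-H\"older parameterization, Theorem \ref{thm:main} gives $(1/\alpha)$-H\"older surjections for every $\alpha>\sdim(\mathcal{F}_A)$, but its branched construction diverges at the endpoint. My plan is to build a non-branching, Peano-style parameterization $F$ respecting the grid structure. Since every $\phi_{i,j}$ has the common Lipschitz constant $n_1^{-1}$, at level $m$ I would partition $[0,1]$ into $(\card A)^m$ equal sub-intervals of length $(\card A)^{-m}$, one per word $w\in A^m$, and send each sub-interval continuously into the corresponding cell $\phi_w(\mathcal{S}_A)$. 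A self-similar cancellation in the relevant H\"older ratio, $n_1^{-m}\cdot ((\card A)^{-m})^{-1/\sdim(\mathcal{F}_A)}=1$, ensures the construction yields a uniform $(1/\sdim(\mathcal{F}_A))$-H\"older estimate at every level. The combinatorial content is to order the $\card A$ sub-cells inside each parent cell so that consecutive sub-cells share a boundary point in the carpet and so that the first/last sub-cells match prescribed entry/exit points inherited from the parent level; this is the recursive matching underlying Hilbert-curve constructions. In the ``otherwise'' connected case one checks that $A\not\subset A_i$ for any $i$ (else $\mathcal{S}_A$ is either a disconnected Cantor subset of a vertical line or the vertical segment already handled), so $\mathcal{S}_A$ has positive horizontal extent and the adjacency graph of sub-cells, at a sufficiently high iterate of $\mathcal{F}_A$, should admit the required Hamiltonian-type paths with prescribed endpoints (possibly allowing a uniformly bounded number of revisits).

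For sharpness in the ``otherwise'' case, I would adapt the calculation from \S\ref{sec:sharpsnow}. Given a $(1/\alpha)$-H\"older surjection $f:[0,1]\to \mathcal{S}_A$ with H\"older constant $H$, fix a level $m$ and for each $w\in A^m$ pick $p_w,q_w\in \phi_w(\mathcal{S}_A)$ with $|p_w-q_w|\gtrsim n_1^{-m}$ (using positive horizontal extent). The closed rectangles $R_w=\phi_w([0,1]^2)$ have pairwise disjoint interiors, so the open sets $f^{-1}(\mathrm{int}(R_w))$ are pairwise disjoint; choosing $p_w,q_w$ in $\mathrm{int}(R_w)$ (available once $\mathcal{S}_A$ meets $\mathrm{int}[0,1]^2$) and picking preimages inside a single connected component of $f^{-1}(\mathrm{int}(R_w))$ yields intervals $I_w\subset[0,1]$ with pairwise disjoint interiors and $\diam f(I_w)\geq |p_w-q_w|\gtrsim n_1^{-m}$. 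Summing $|I_w|\geq H^{-\alpha}(\diam f(I_w))^\alpha$ over $w\in A^m$ and using $\sum_w |I_w|\leq 1$ gives $(\card A)^m n_1^{-m\alpha}\lesssim H^\alpha$ uniformly in $m$, which forces $\alpha\geq\log_{n_1}(\card A)=\sdim(\mathcal{F}_A)$. For the degenerate sub-case where $\mathcal{S}_A\subset\partial[0,1]^2$, one has $\sdim(\mathcal{F}_A)=1$ and the result reduces to the line-segment analysis.

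The principal obstacle is the existence argument at the sharp endpoint in the ``otherwise'' case. Theorem \ref{thm:main} cannot be stretched to the endpoint by a direct refinement of its branched construction, which produces infinite total time; a genuinely new Peano-like construction adapted to the Bedford-McMullen adjacency structure is needed. A secondary concern is the Hamiltonian-type path requirement: for arbitrary connected Bedford-McMullen carpets, verifying that the adjacency graph of sub-cells (or of iterated sub-cells) admits a compatible endpoint-matching order across all scales requires a careful combinatorial analysis that does not appear to be entirely standard and is the point where the proof is expected to require the most work.
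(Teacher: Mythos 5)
Your treatment of the three degenerate cases matches the paper. In the main ``otherwise'' case, however, your proposal diverges from the paper in both parts, and both divergences contain genuine gaps.

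For \emph{existence}, the paper does not attempt a direct Peano-type construction on the carpet. Instead it uses the Lipschitz-lift trick of \S\ref{sec:snow} (Lemma~\ref{lem:snow} and Corollary~\ref{cor:sponge}): one replaces the Euclidean metric on $\R^2$ by the product metric
\[
d\bigl((x_1,x_2),(x_1',x_2')\bigr)=\Bigl(|x_1-x_1'|^{2}+|x_2-x_2'|^{2\log_{n_2}n_1}\Bigr)^{1/2},
\]
i.e.\ snowflakes only the second coordinate, and observes that every affine map $\phi_{i,j}$ becomes a genuine similarity of ratio $n_1^{-1}$ in $(\R^2,d)$. The lifted attractor is then a self-similar set in a complete doubling metric space with $\Haus^s>0$ (by Stella's theorem on $\beta$-spaces), so Theorem~\ref{thm:remes} applies and gives a $(1/s)$-H\"older parameterization; composing with the $1$-Lipschitz identity map back to Euclidean $\R^2$ finishes existence. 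Your Hamiltonian-path proposal is a different route, and as you acknowledge, verifying endpoint-compatible orderings across all scales for arbitrary connected $A$ is substantial. This is precisely the kind of recursive combinatorics that Remes' method (via Ahlfors regularity and the branch-preserving property) is designed to bypass, and the self-affine-to-self-similar change of metric is what lets you invoke Remes at all.

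For \emph{sharpness}, your argument has a concrete gap that the paper's ``column of rectangles'' avoids. You propose to find, in each box $R_w=\phi_w([0,1]^2)$, a connected component of $f^{-1}(\operatorname{int}R_w)$ whose image has diameter $\gtrsim n_1^{-m}$. But a single component $C$ of $f^{-1}(\operatorname{int}R_w)$ maps into $\operatorname{int}R_w$ and has $f(\partial C)\subset\partial R_w$; if every such component had small image, then every point of $\mathcal{S}_A\cap\operatorname{int}R_w$ would lie near $\partial R_w$. Since $R_w$ has height $n_2^{-m}$ and width $n_1^{-m}$ with $n_2\geq n_1$, the strongest lower bound you can extract this way is $\diam f(C)\gtrsim n_2^{-m}$ (the curve can enter and exit through the long horizontal edges of $R_w$ after traveling only a vertical distance of order $n_2^{-m}$), which yields only $\alpha\geq\log_{n_2}(\card A)$, strictly weaker than $\sdim(\mathcal{F}_A)=\log_{n_1}(\card A)$ when $n_2>n_1$. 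The paper fixes this with Lemma~\ref{lem:specialbox}: since $A\notin\{A_0,\dots,A_{n_1}\}$, some sub-box $(i_0,j_0)\in A$ has a missing neighbor directly above or below. This lets one build, for each $w\in A^m$, a narrow ``column'' $\tilde S_w$ of width $n_1^{-(m+1)}$ whose top and bottom exits are blocked by the missing box, so that the piece of the curve through the center $x_w$ of $\tilde S_w$ can only leave through the short left/right edges and must therefore traverse horizontal distance $\gtrsim n_1^{-(m+1)}$. That is what restores the correct power of $n_1$ in $\sum_w |I_w|\geq H^{-\alpha}\sum_w(\diam f(I_w))^\alpha\gtrsim H^{-\alpha}(\card A)^m n_1^{-m\alpha}$.
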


Note that the conclusion of Theorem \ref{thm:carpets} is trivial in the case that $A\in\{A_0,\dots,A_{n_1}\}$ or in the case that $\card{A}=1$. Below we give a proof of the sharpness of the exponent $\a$, and in \textsection\ref{sec:snow} we show why such a surjection exists.

\begin{lem}\label{lem:specialbox}
If $\mathcal{S}_A$ is connected and $A\not\in \{A_0,\dots,A_{n_1}\}$, then  there exists a pair of indices $(i,j) \in A$ such that $j<n_2$ and $(i,j+1) \not\in A$ or such that $j>1$ and $(i,j-1) \not\in A$.
\end{lem}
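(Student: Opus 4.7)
The plan is to prove the contrapositive. I would assume that $\mathcal{S}_A$ is connected and that no $(i,j) \in A$ has a vertical neighbor missing from $A$ (i.e., for every $(i,j) \in A$, both $j = n_2$ or $(i,j+1) \in A$, and both $j = 1$ or $(i,j-1) \in A$), and then aim to show $A \in \{A_0, A_1, \dots, A_{n_1}\}$.

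First, the hypothesis implies that for each $i \in \{1,\dots,n_1\}$ the column $J_i := \{j : (i,j) \in A\}$ is closed in $\{1,\dots,n_2\}$ under both successor and predecessor, hence $J_i$ is either empty or all of $\{1, \dots, n_2\}$. Setting $I := \{i : J_i \neq \emptyset\}$, this gives $A = I \times \{1, \dots, n_2\}$, so the problem reduces to showing $|I| \in \{1, n_1\}$.

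Next, I would identify $\mathcal{S}_A$ with a product. Let $\bar\phi_i(x) := (i-1+x)/n_1$ for $i \in I$, and let $C_I \subset [0,1]$ be the attractor of the IFS $\{\bar\phi_i : i \in I\}$. Since $A = I \times \{1,\dots,n_2\}$ and $\bigcup_{j=1}^{n_2} [(j-1)/n_2, j/n_2] = [0,1]$, the product $C_I \times [0,1]$ is $\mathcal{F}_A$-invariant, hence $\mathcal{S}_A = C_I \times [0,1]$ by uniqueness of the Hutchinson attractor. Consequently, connectedness of $\mathcal{S}_A$ forces $C_I$ to be connected, and since $C_I$ is a compact subset of $[0,1]$, it must be a (possibly degenerate) interval.

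The key step, which is the only real content, is to show that $C_I$ being an interval forces $|I| \in \{1, n_1\}$. The self-similarity relation $C_I = \bigcup_{i \in I} \bar\phi_i(C_I)$, with the pieces having pairwise disjoint interiors, yields $\Haus^1(C_I) = (|I|/n_1)\,\Haus^1(C_I)$; so if $|I| < n_1$, then $\Haus^1(C_I) = 0$, which rules out $C_I$ being a nondegenerate interval, and therefore $C_I$ must reduce to a single point, equivalently $|I| = 1$. If $|I| = n_1$, then $C_I = [0,1]$ and $A = A_0$; if $|I| = 1$, say $I = \{i_0\}$, then $A = \{i_0\} \times \{1,\dots,n_2\} = A_{i_0}$. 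Either way $A \in \{A_0, A_1, \dots, A_{n_1}\}$, finishing the contrapositive. The only real obstacle is the Lebesgue-measure dichotomy in the final step, and it is a standard self-similarity argument.
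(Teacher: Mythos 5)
Your proof is correct and takes the same contrapositive route as the paper: the failure of the conclusion forces $A = B \times \{1,\dots,n_2\}$ for some nonempty $B\subset\{1,\dots,n_1\}$, and one then analyzes $\card B$. The only difference is that the paper simply asserts that $1<\card B<n_1$ makes $\mathcal{S}_A$ disconnected, while you supply a complete argument via the product decomposition $\mathcal{S}_A = C_I\times[0,1]$ and the $\Haus^1$-measure dichotomy for the one-dimensional attractor $C_I$.
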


\begin{proof} To establish the contrapositive, suppose that the conclusion of the lemma fails. Then $A = B\times\{1,\dots,n_2\}$ for some nonempty set $B\subset\{1,\dots,n_1\}$. If $\card(B)=1$, then $A=A_i$ for some $1\leq i\leq n_1$. If $1<\card(B)<n_1$, then the carpet $\mathcal{S}_A$ is disconnected. Finally, if $\card(B)=n_1$, then $A=A_0$.
\end{proof}

\begin{lem}\label{lem:connectedcarpet}
Suppose that $\mathcal{S}_A$ is connected, $\card{A}\geq 2$, and $A\not\in \{A_1,\dots,A_{n_1}\}$. Then the ``first iteration" $\bigcup_{(i,j)\in A}\phi_{i,j}([0,1]^2)$ is a connected set that intersects both the left and the right edge of $[0,1]^2$.
\end{lem}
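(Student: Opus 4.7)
The plan is to handle the two conclusions separately, supported by one auxiliary fact about one-dimensional self-similar sets: for any integer $N\geq 2$ and any $J\subset\{1,\dots,N\}$ with $\card(J)\geq 2$, the attractor of $\{x\mapsto(j-1+x)/N:j\in J\}$ is connected if and only if $J=\{1,\dots,N\}$, in which case the attractor equals $[0,1]$. I would prove this fact first: a connected compact subset of $\R$ is a closed interval $[a,b]$, and the self-similarity equation $[a,b]=\bigcup_{j\in J}\bigl[(j-1+a)/N,(j-1+b)/N\bigr]$ gives $\card(J)(b-a)/N\geq b-a$ by a length comparison, forcing $\card(J)=N$ (hence $J=\{1,\dots,N\}$) whenever $a<b$; an endpoint-matching check on consecutive intervals then yields $a=0$, $b=1$. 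The degenerate case $a=b$ cannot occur since distinct generators of the form $x\mapsto(j-1+x)/N$ have distinct fixed points.

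Set $I:=\{i:(i,j)\in A\text{ for some }j\}$ and $X:=\bigcup_{(i,j)\in A}\phi_{i,j}([0,1]^2)$. I would next argue $\card(I)\geq 2$: if $\card(I)=1$, say $A\subset A_{i_0}$, then $A\neq A_{i_0}$ combined with $\card(A)\geq 2$ gives $J:=\{j:(i_0,j)\in A\}\subsetneq\{1,\dots,n_2\}$ with $\card(J)\geq 2$, so the $y$-projection $\pi_y(\mathcal{S}_A)$ is the attractor of $\{y\mapsto(j-1+y)/n_2:j\in J\}$, which by the auxiliary fact is disconnected --- contradicting the connectedness of $\mathcal{S}_A$. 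With $\card(I)\geq 2$ in hand, the continuous image $\pi_x(\mathcal{S}_A)$ is a connected compact subset of $[0,1]$ that is also the attractor of $\{x\mapsto(i-1+x)/n_1:i\in I\}$, and the auxiliary fact forces $I=\{1,\dots,n_1\}$. This places cells of $X$ in both the leftmost column (so $X$ meets $\{0\}\times[0,1]$) and the rightmost column (so $X$ meets $\{1\}\times[0,1]$).

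For connectedness of $X$, each cell $C_{i,j}:=\phi_{i,j}([0,1]^2)$ is a closed rectangle containing the nonempty set $\phi_{i,j}(\mathcal{S}_A)\subset\mathcal{S}_A$, so $C_{i,j}\cup\mathcal{S}_A$ is connected as a union of two connected sets with common point. Since $\mathcal{S}_A\subset X$, we may write $X=\bigcup_{(i,j)\in A}(C_{i,j}\cup\mathcal{S}_A)$ as a union of connected sets all containing the connected set $\mathcal{S}_A$, and therefore $X$ is connected. The main obstacle is the careful verification of the auxiliary fact about connected self-similar subsets of $[0,1]$; once that is in place, both the deduction $I=\{1,\dots,n_1\}$ and the final connectedness argument are routine.
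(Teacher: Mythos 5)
Your proof is correct, and it takes a genuinely different route from the paper's. The paper proves this lemma by a brief contradiction argument (left as an exercise): if the first iteration failed to touch both the left and right edges of $[0,1]^2$, then the second iteration would be disconnected, contradicting connectedness of $\mathcal{S}_A$. You instead project onto the coordinate axes and reduce to a clean one-dimensional lemma: a connected attractor of an IFS of the form $\{x\mapsto(j-1+x)/N:j\in J\}$ with $\card(J)\geq 2$ forces $J=\{1,\dots,N\}$, proved by a Lebesgue-measure comparison on $[a,b]=\bigcup_{j\in J}[(j-1+a)/N,(j-1+b)/N]$. Applying this to $\pi_y(\mathcal{S}_A)$ rules out $\card(I)=1$, and applying it to $\pi_x(\mathcal{S}_A)$ then gives $I=\{1,\dots,n_1\}$, which is exactly the edge-touching conclusion. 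The connectedness of the first iteration $X$ is handled by noting each closed cell $C_{i,j}$ meets the connected set $\mathcal{S}_A\subset X$, so $X$ is a union of connected sets sharing a common point. Your argument is complete and checkable, whereas the paper's is a sketch; your projection technique also directly yields the stronger statement $I=\{1,\dots,n_1\}$, which is precisely what is needed for the subsequent Corollary \ref{cor:connectedcarpets}, so the extra work pays off.
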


\begin{proof} If $\card{A}\geq 2$, $A\not\in\{A_1,\dots,A_{n_1}\}$, and the ``first iteration" $\bigcup_{(i,j)\in A}\phi_{i,j}([0,1]^2)$ does not touch the left or right edge, then the ``second iteration" $\bigcup_{(i,j),(k,l)\in A} \phi_{i,j}\circ \phi_{j,k}([0,1]^2)$ is disconnected. We leave the details as a useful exercise for the reader. It may help to visualize the diagrams in Figures \ref{fig:mc23} or \ref{fig:carpets}.\end{proof}

\begin{cor}\label{cor:connectedcarpets}
Suppose that $\mathcal{S}_A$ is connected, $\card A\geq 2$, and $A\not\in \{A_1,\dots,A_{n_1}\}$. Then $\mathcal{S}_A$ intersects both left and right edge of $[0,1]^2$.
\end{cor}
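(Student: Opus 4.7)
The plan is to use Lemma \ref{lem:connectedcarpet} to extract specific generators in the left and right columns of the $n_1\times n_2$ grid, and then promote this to a statement about the attractor itself by invoking the invariance $\phi_{i,j}(\mathcal{S}_A)\subset \mathcal{S}_A$ together with the fact that the fixed point of each $\phi_{i,j}$ lies in $\mathcal{S}_A$.

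First I would observe that for each $(i,j)\in A$, the rectangle $\phi_{i,j}([0,1]^2)=[(i-1)/n_1,i/n_1]\times [(j-1)/n_2,j/n_2]$ meets the left edge $\{0\}\times[0,1]$ of $[0,1]^2$ iff $i=1$, and meets the right edge $\{1\}\times[0,1]$ iff $i=n_1$. Hence the conclusion of Lemma \ref{lem:connectedcarpet} that the first iteration $\bigcup_{(i,j)\in A}\phi_{i,j}([0,1]^2)$ meets both vertical edges immediately yields $(1,j_\ell)\in A$ and $(n_1,j_r)\in A$ for some $j_\ell,j_r\in\{1,\dots,n_2\}$.

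Next I would show that the unique fixed point of the contraction $\phi_{1,j_\ell}$ lies in $\mathcal{S}_A\cap(\{0\}\times[0,1])$. Solving $\phi_{1,j_\ell}(x,y)=(x,y)$ gives the fixed point $p_\ell=(0,(j_\ell-1)/(n_2-1))$, which does lie on the left edge of $[0,1]^2$ since $1\leq j_\ell\leq n_2$. Pick any $q\in\mathcal{S}_A$; since $\phi_{1,j_\ell}(\mathcal{S}_A)\subset \mathcal{S}_A$, all iterates $\phi_{1,j_\ell}^n(q)$ remain in $\mathcal{S}_A$, and by the Banach fixed point theorem they converge to $p_\ell$. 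Closedness of $\mathcal{S}_A$ forces $p_\ell\in\mathcal{S}_A$, so $\mathcal{S}_A$ meets the left edge. The argument for the right edge is symmetric: the fixed point of $\phi_{n_1,j_r}$ is $p_r=(1,(j_r-1)/(n_2-1))$, which lies on the right edge and, by the same iteration argument, belongs to $\mathcal{S}_A$.

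There is no real obstacle here; the corollary is a short consequence of Lemma \ref{lem:connectedcarpet} combined with elementary IFS invariance. The only minor thing to be careful about is noting explicitly why the fixed point lies in the attractor (rather than merely in the first iteration's union of rectangles), which the standard contraction-to-fixed-point argument handles cleanly.
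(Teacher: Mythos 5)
Your proof is correct, and since the paper states this corollary with no written proof, your fixed-point argument is exactly the natural way to fill in what the authors leave implicit. The only thing worth noting is that your plan already hinges on the standard fact that the fixed point of each generating contraction lies in the attractor, and you correctly supply the short contraction-iteration argument for it rather than leaving it unjustified.
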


We are ready to prove Theorem \ref{thm:carpets}.

\begin{proof}[{Proof of Theorem \ref{thm:carpets}}] With the conclusion being straightforward otherwise, let us assume that $\mathcal{S}_A$ is a connected Bedford-McMullen carpet with $\card{A}\geq 2$ and $A\not\in\{A_0,\dots,A_{n_1}\}$. Let $s=\sdim{\mathcal{F}_A}$. We defer the proof of existence of a $(1/s)$-H\"older parameterization of $\mathcal{S}_A$ to \textsection\ref{sec:snow}, where we prove existence of H\"older parameterizations for self-affine sponges in $\R^N$ (see Corollary \ref{cor:sponge}). It remains to prove the sharpness of the exponent $1/s$.

Set $k=\card{A}$ and suppose that $f:[0,1] \to \mathcal{S}_{A}$ is a $(1/\a)$-H\"older surjection for some exponent $\alpha>0$. Since $\mathcal{S}_A$ has positive diameter, the H\"older constant $H:=\Hold_{1/\a} f>0$. By Proposition \ref{prop:BMdimensions}, $\sdim\mathcal{F}_A=\log_{n_1}(k)$. Thus, we must show that $\a\geq \log_{n_1}{k}$.

Fix $m\in\N$ and let $A^m$, $A^*$, and $\phi_w$ be defined as in \S\ref{sec:words} relative to the alphabet $\{(i,j):1\leq i\leq n_1,1\leq j\leq n_2\}$.
For each $m\in\N$ and each word $w = (i_1,j_1)\cdots (i_m,j_m)$,
set $S_w = \phi_w([0,1]^2)$. Let $(i_0,j_0)\in A$ be an index given by Lemma \ref{lem:specialbox}, i.e.~an address in the first iterate such that the rectangle either immediately above or below is omitted from the carpet. Without loss of generality, we assume that $j_0<n_2$ and $(i_0,j_0+1) \not\in A$ (there is no rectangle below $(i_0,j_0)$). Moreover, we assume that
\[ j_0 = \min\{j : (i_0,j) \in A \text{ and }(i_0,j+1)\not\in A\}.\]
For each word $w\in A^m$, we now define a ``column of rectangles" $\tilde{S}_w$, as follows.

\emph{Case 1.} If $S_w$ intersects the bottom edge of $[0,1]^2$, then set $\tilde{S}_w = \bigcup_{j=0}^{j_0} S_{w(i_0,j)}$.

\emph{Case 2.} Suppose that $S_w$ does not intersect the bottom edge of $[0,1]^2$. Let $u=(i_1,j_1)\cdots(i_m,j_m)$ with
\[(i_1,j_1),\dots,(i_m,j_m) \in \{1,\dots,n_1\}\times\{1,\dots,n_2\}\]
such that the upper edge of $S_u$ is the same as the lower edge of $S_w$. This case is divided into three subcases.

\emph{Case 2.1.} Suppose that $u\not\in A^m$. Then, as in Case 1, set $\tilde{S}_w = \bigcup_{j=0}^{j_0} S_{w(i_0,j)}$.

\emph{Case 2.2.} Suppose that $u\in A$ and $u(i_0,n_2) \not\in A^{m+1}$. Then we set  $\tilde{S}_w = \bigcup_{j=0}^{j_0} S_{w(i_0,j)}$.

\emph{Case 2.3.} Suppose that $u\in A^m$ and $u(i_0,n_2) \in A^{m+1}$. Let $j_1 =\max \{ j : (i_0,j-1)\not\in A \}$. Then we set $\tilde{S}_w =\left ( \bigcup_{j=0}^{j_0} S_{w(i_0,j)}\right) \cup \left(\bigcup_{j=j_1}^{n_2} S_{u(i_0,j)} \right)$.

In each case, $\tilde{S}_w\cap\mathcal{S}_A$ is a connected set that intersects both the left and right edges of $\tilde{S}_w$, but does not intersect the rectangles $S_u$ immediately above and below $\tilde{S}_w$. Moreover, the sets $\tilde{S}_w$ have mutually disjoint interiors. If $\tau_w$ is the line segment joining the midpoints of upper and lower edges of $\tilde{S}_w$, then $\tau_w$ contains a point of $\mathcal{S}_A$, which we denote by $x_w$.

Consequently, there exists $I_w \subset [0,1]$ such that $f(I_w)$ is a curve in $\tilde{S}_w$ joining $x_w$ with one of the left/right edges of  $\tilde{S}_w$. Clearly, the intervals $I_w$ are mutually disjoint and
\[ 1 \geq \sum_{w\in A^m}\diam{I_w} \geq H^{-\a}\sum_{w\in A^m}(\diam{f(I_w)})^{\a} \gtrsim_{H,\a} \sum_{w\in A^m}(2n_1^{m+1})^{-\a} \gtrsim_{n_1,\a} (kn_1^{-\a})^{m}.\]
Since $m$ is arbitrary, $\a\geq \log_{n_1}{k}$.
\end{proof}

\subsection{\!\!Lipschitz lifts~and~H\"older~parameterization~of~connected self-affine~sponges}\label{sec:snow}
Analogues of the Bedford-McMullen carpets in higher dimensional Euclidean spaces are called \emph{self-affine sponges}; for background and further references, see \cite{Kenyon-Peres}, \cite{Das-Simmons}, \cite{Fraser-Howroyd}. To describe a self-affine sponge, let $N\geq 2$ and let $2\leq n_1\leq\cdots\leq n_N$ be integers. For each $n$-tuple $\textbf{i} = (i_1,\dots,i_N) \in \{1,\dots,n_1\}\times\cdots\times\{1,\dots,n_N\}$, we define an affine contraction $\phi_{\textbf{i}} : \R^N \to \R^N$ by
\[ \phi_{\textbf{i}}(x_1,\dots x_N) = (n_1^{-1}(i_1-1+x_1),\dots, n_N^{-1}(i_N-1+x_N))\quad\text{with }\Lip \phi_{\mathbf{i}}=n_1^{-1}.\]
For every nonempty set $A \subset \{1,\dots,n_1\}\times\cdots\times\{1,\dots,n_N\}$, we associate an iterated function system $\mathcal{F}_A = \{\phi_{\textbf{i}}: \textbf{i}\in A\}$ over $\R^N$ and let $\mathcal{S}_A$ denote the attractor of $\mathcal{F}_A$, which we call a self-affine sponge.

Our strategy to parameterize a connected Bedford-McMullen carpet or self-affine sponge is to construct a Lipschitz lift of the set to a self-similar set in a metric space for which  we can invoke Theorem \ref{thm:remes}. Then the H\"older parameterization of the self-similar set descends to a H\"older parameterization of the carpet or sponge.

\begin{lem}[Lipschitz lifts]\label{lem:snow}
Let $N\geq 2$ be an integer, let $2\leq n_1\leq\cdots\leq n_N$ be integers, and let $A$ be a nonempty set as above. There exists a doubling metric $d$ on $\R^N$ such that if $\widetilde{\mathcal{S}}_A$ denotes the attractor of the IFS $\widetilde{\mathcal{F}}_A=\{\phi_\mathbf{i}:\mathbf{i}\in A\}$ over $(\R^N,d)$, then
\begin{enumerate}
\item the identity map $\mathrm{Id}: \widetilde{\mathcal{S}}_A\rightarrow \mathcal{S}_A$ is a $1$-Lipschitz homeomorphism;
\item $\sdim\widetilde{\mathcal{F}}_A=\sdim\mathcal{F}_A=\log_{n_1}(\card A)=:s$, $\widetilde{\mathcal{S}}_A$ is self-similar, and $\Haus^s(\widetilde{\mathcal{S}}_A)>0$.
\end{enumerate}
\end{lem}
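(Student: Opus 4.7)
The plan is to construct $d$ as a snowflake-product metric that normalizes the different coordinate contraction rates, so that every $\phi_{\mathbf{i}}$ becomes a similarity with a common ratio. Specifically, for each $k \in \{1,\dots,N\}$, set $\alpha_k := \log n_1 / \log n_k \in (0,1]$, so that $\alpha_1 = 1$ and $n_k^{-\alpha_k} = n_1^{-1}$ for every $k$. Define
\[ d(x,y) := \sqrt{N}\,\max_{1 \leq k \leq N} |x_k - y_k|^{\alpha_k}. \]
Each coordinate factor $|x_k - y_k|^{\alpha_k}$ is a snowflake of the Euclidean metric on $\R$ (hence a metric since $\alpha_k \in (0,1]$), and the maximum of finitely many metrics is a metric; completeness and the doubling property both pass through the coordinate-wise snowflake structure.

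With $d$ in hand, a direct computation shows that every $\phi_{\mathbf{i}}$ is a similarity of $(\R^N, d)$ with ratio $n_1^{-1}$:
\[ d(\phi_{\mathbf{i}}(x), \phi_{\mathbf{i}}(y)) = \sqrt{N}\,\max_k n_k^{-\alpha_k}|x_k - y_k|^{\alpha_k} = n_1^{-1}\,d(x,y). \]
Consequently $\widetilde{\mathcal{F}}_A$ is a self-similar IFS with ratio $n_1^{-1}$ and similarity dimension $\log_{n_1}(\card A) = \sdim \mathcal{F}_A$. The attractors $\widetilde{\mathcal{S}}_A$ and $\mathcal{S}_A$ agree as subsets of $\R^N$, because the attractor is determined only by the maps (one can write both as $\bigcap_{m\geq 1} \bigcup_{w\in A^m} \phi_w([0,1]^N)$). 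Finally, for $x,y \in [0,1]^N$ each $|x_k - y_k| \leq 1$ together with $\alpha_k \leq 1$ gives $|x_k - y_k|^{\alpha_k} \geq |x_k - y_k|$, so
\[ |x - y|_{\R^N} \leq \sqrt{N}\max_k|x_k - y_k| \leq \sqrt{N}\max_k|x_k - y_k|^{\alpha_k} = d(x,y), \]
confirming that the identity is 1-Lipschitz from $\widetilde{\mathcal{S}}_A$ to $\mathcal{S}_A$.

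The most delicate part will be showing $\Haus^s(\widetilde{\mathcal{S}}_A)>0$. The Euclidean and $d$-topologies coincide on bounded sets (the identity is continuous in both directions there), so $U := (0,1)^N$ is open in $(\R^N,d)$ and witnesses the OSC, since the sub-boxes $\phi_{\mathbf{i}}(U)$ are pairwise disjoint and contained in $U$. In the generic situation where $\widetilde{\mathcal{S}}_A \cap U \neq \emptyset$, the SOSC holds, and since $(\R^N,d)$ is a complete doubling space (hence a $\beta$-space), Theorem \ref{thm:Stella} yields $\Haus^s(\widetilde{\mathcal{S}}_A)>0$. The main obstacle to a clean write-up is the degenerate configurations in which $\mathcal{S}_A$ lies entirely in a coordinate face of $[0,1]^N$ (for instance when every $\mathbf{i}\in A$ has the same first coordinate), so that $\widetilde{\mathcal{S}}_A \cap U = \emptyset$. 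These can be handled by restricting the construction to the invariant face, where $\widetilde{\mathcal{F}}_A$ acts as a lower-dimensional sponge whose own OSC/SOSC can be checked in $\R^{N-1}$, and iterating the reduction; the same choice of exponents $\alpha_k$ continues to make the restricted maps similarities of the induced snowflake metric with ratio $n_1^{-1}$, so the similarity dimension and the 1-Lipschitz property are preserved along the reduction.
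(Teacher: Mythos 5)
Your proof takes essentially the same route as the paper: replace the Euclidean metric by a coordinatewise snowflake with exponents $\alpha_k = \log_{n_k}(n_1)$, check that the maps $\phi_{\mathbf{i}}$ become similarities of ratio $n_1^{-1}$, deduce $\sdim\widetilde{\mathcal{F}}_A = \log_{n_1}(\card A)$, and then get $\Haus^s(\widetilde{\mathcal{S}}_A)>0$ from the strong open set condition via Stella's theorem. Your choice of the $\ell^\infty$-type product metric $\sqrt{N}\max_k|x_k-y_k|^{\alpha_k}$ versus the paper's $\ell^2$-type metric $\bigl(\sum_k|x_k-y_k|^{2\alpha_k}\bigr)^{1/2}$ is immaterial; both are doubling, both make the $\phi_{\mathbf{i}}$ similarities of ratio $n_1^{-1}$, and both dominate the Euclidean metric on $[0,1]^N$, which gives the $1$-Lipschitz identity.

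Where your write-up genuinely adds value: you flag that $U=(0,1)^N$ need not witness the SOSC, because $\widetilde{\mathcal{S}}_A$ can lie entirely in a coordinate face of the cube. This is correct and is a real gap the paper glosses over: writing $I_k := \{i_k : \mathbf{i}\in A\}$ for the digit set in coordinate $k$, one has $\widetilde{\mathcal{S}}_A\cap(0,1)^N = \emptyset$ exactly when $I_k=\{1\}$ or $I_k=\{n_k\}$ for some $k$, and this can happen for a nontrivial connected sponge once $N\geq 3$ (e.g.\ $n_1=n_2=n_3=2$ and $A=\{1\}\times\{1,2\}^2$, whose attractor is a full square in the face $\{x_1=0\}$). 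Your fix by dimensional reduction works, and you are right that the choice of exponents $\alpha_k$ keeps the reduced maps similarities of the same ratio $n_1^{-1}$, so $s$ is preserved through the reduction. A lighter repair that avoids the induction is simply to widen the open set in the degenerate directions: take $U=\prod_k U_k$ with $U_k=(-1,1)$ if $I_k=\{1\}$, $U_k=(0,2)$ if $I_k=\{n_k\}$, and $U_k=(0,1)$ otherwise. One checks directly that $\phi_{\mathbf{i}}(U)\subset U$; disjointness of $\phi_{\mathbf{i}}(U)$ and $\phi_{\mathbf{j}}(U)$ persists because distinct $\mathbf{i},\mathbf{j}\in A$ must differ in a coordinate where $\card I_k\geq 2$, hence where $U_k=(0,1)$ and the images are disjoint subintervals; and the cyclic infinite word through all letters of $A$ produces a point of $\widetilde{\mathcal{S}}_A$ in $U$. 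This yields SOSC in a single step and then Stella's theorem applies as before.

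One other small point: your claim that $\widetilde{\mathcal{S}}_A$ and $\mathcal{S}_A$ coincide as sets is correct, but the cleanest justification is that $d$ and the Euclidean metric induce the same topology on $\R^N$ (each coordinate factor is a snowflake of the line), so compact sets and hence the Hutchinson fixed point are the same; the $\bigcap_m\bigcup_w\phi_w([0,1]^N)$ description is a convenient way to see this, provided one notes that $[0,1]^N$ maps into itself under each $\phi_{\mathbf{i}}$.
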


\begin{proof}
Consider the product metric $d$ on $\R^N$ given by
\[ d((x_1,\dots,x_N),(x_1',\dots,x_N')) = \left(\sum_{i=1}^N|x_i-x_i'|^{2\log_{n_i}{n_1}}\right)^{1/2}. \]
In other words, $d$ is a metric obtained by ``snowflaking" the Euclidean metric separately in each coordinate.
Note that if $n_1=\cdots=n_N$, then $d$ is the Euclidean metric. It is straightforward to check that $(\R^N,d)$ is a doubling metric space and the identity map $\mathrm{Id}: (\mathcal{S}_A,d) \to \mathcal{S}_A$ is a 1-Lipschitz homeomorphism; e.g.~see Heinonen \cite{Heinonen}.
We now claim that the affine contractions $\phi_{\textbf{i}}$ generating the sponge $\mathcal{S}_A$ become similarities in the metric space $(\R^N,d)$. Indeed, let $\textbf{i} = (i_1,\dots,i_N)\in A$. Then
\begin{align*}
d(\phi_{\textbf{i}}(x_1,\dots,x_N),\phi_{\textbf{i}}(x_1',\dots,x_N')) &= \left(\sum_{i=1}^Nn_i^{-2\log_{n_i}{n_1}}|x_i-x_i'|^{2\log_{n_i}{n_1}}\right)^{1/2} \\
&= n_1^{-1} d((x_1,\dots,x_N),(x_1',\dots,x_N')).
\end{align*}
Since each of the similarities $\phi_{\textbf{i}}$ have scaling factor $n_1^{-1}$, it follows that $$\sdim(\widetilde{\mathcal{F}}_A)=\sdim\mathcal{F}_A=\log_{n_1}(\card A)=:s$$ Finally, $\widetilde{\mathcal{F}}_A$ satisfies the strong open set condition (SOSC) with $U=(0,1)^N$. Therefore, $\Haus^s(\widetilde{\mathcal{S}}_A)>0$ by Theorem \ref{thm:Stella}, since doubling metric spaces are $\beta$-spaces.
\end{proof}

\begin{cor}\label{cor:sponge}
If $\mathcal{S}_A$ is a connected self-affine sponge in $\R^N$, then $\mathcal{S}_A$ is a $(1/s)$-H\"older curve, where $s=\log_{n_1}(\card A)$ is the similarity dimension of $\mathcal{F}_A$.
\end{cor}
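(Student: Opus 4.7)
The plan is to reduce Corollary \ref{cor:sponge} to Theorem \ref{thm:remes} via the Lipschitz lift provided by Lemma \ref{lem:snow}. First I would invoke Lemma \ref{lem:snow} to obtain the snowflake product metric $d$ on $\R^N$ and the corresponding IFS $\widetilde{\mathcal{F}}_A = \{\phi_{\mathbf{i}} : \mathbf{i}\in A\}$ over $(\R^N, d)$, whose attractor $\widetilde{\mathcal{S}}_A$ is self-similar, has similarity dimension $s = \log_{n_1}(\card A)$, and satisfies $\Haus^s(\widetilde{\mathcal{S}}_A) > 0$. Since the snowflake metric $d$ induces the Euclidean topology on $\R^N$, the space $(\R^N, d)$ is complete, and the identity map $\mathrm{Id}: \widetilde{\mathcal{S}}_A \to \mathcal{S}_A$ is a $1$-Lipschitz homeomorphism. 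In particular, since $\mathcal{S}_A$ is assumed connected, $\widetilde{\mathcal{S}}_A$ is also connected.

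Next I would apply Theorem \ref{thm:remes} to the IFS $\widetilde{\mathcal{F}}_A$: all of its hypotheses are in place (complete metric space, similarities, connected attractor, positive $\Haus^s$ measure), so Theorem \ref{thm:remes} yields a $(1/s)$-H\"older continuous surjection $f : [0,1] \to \widetilde{\mathcal{S}}_A$.

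To finish, define $F := \mathrm{Id}\circ f : [0,1]\to \mathcal{S}_A$, regarded now as a map into Euclidean $\R^N$. For any $x,y\in[0,1]$,
\[
|F(x) - F(y)| \leq d(f(x),f(y)) \leq \Hold_{1/s}(f)\,|x-y|^{1/s},
\]
so $F$ is a $(1/s)$-H\"older surjection onto $\mathcal{S}_A$, proving the corollary. There is essentially no obstacle remaining once Lemma \ref{lem:snow} and Theorem \ref{thm:remes} are in hand; the only point that deserves a sentence of verification is that positivity of $\Haus^s$ holds for $\widetilde{\mathcal{S}}_A$ (handled via the SOSC with $U = (0,1)^N$ and Theorem \ref{thm:Stella} inside the proof of Lemma \ref{lem:snow}), and that $(\R^N,d)$ is indeed complete so that Theorem \ref{thm:remes} applies.
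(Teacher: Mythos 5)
Your proposal is correct and follows the same route as the paper: lift via Lemma \ref{lem:snow}, apply Theorem \ref{thm:remes} to the self-similar lift, and compose with the $1$-Lipschitz identity map. You are in fact slightly more careful than the published proof, since you explicitly note completeness of $(\R^N,d)$ and connectedness of $\widetilde{\mathcal{S}}_A$ before invoking Theorem \ref{thm:remes}.
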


\begin{proof} Let $\widetilde{\mathcal{S}}_A$ denote the lift of the sponge $\mathcal{S}_A$ in Euclidean space $\R^N$ to the metric space $(\R^N,d)$ given by Lemma \ref{lem:snow}. By Lemma \ref{lem:snow} (2), the lifted sponge $\widetilde{\mathcal{S}}_A$ is a self-similar set and $\mathcal{H}^{s}(\widetilde{\mathcal{S}}_A) >0$, where $s=\sdim \widetilde{\mathcal{F}}_A=\sdim\mathcal{F}_A=\log_{n_1}(\card A)$. By Remes' theorem in metric spaces (Theorem \ref{thm:remes}), there exists a  $(1/s)$-H\"older surjection $F:[0,1]\to \widetilde{\mathcal{S}}_A$. By Lemma \ref{lem:snow} (1), the identity map $\mathrm{Id}: \widetilde{\mathcal{S}}_A\rightarrow \mathcal{S}_A$ is a Lipschitz homeomorphism. Therefore, the composition $G=[0,1]\rightarrow \mathcal{S}_A$, $G:=\mathrm{Id}\circ F$ is a $(1/s)$-H\"older surjection.
\end{proof}

\bibliography{tsp-refs}
\bibliographystyle{amsbeta}

\end{document}